\newtheorem{thm}[equation]{Theorem}
\newtheorem{cl}[equation]{Claim}
\newtheorem{que}[equation]{Question}
\newtheorem{cor}[equation]{Corollary}
\newtheorem{prop}[equation]{Proposition}
\newtheorem{lemma}[equation]{Lemma}
\theoremstyle{definition}
\newtheorem{defn}[equation]{Definition}
\newtheorem{remark}[equation]{Remark}
\numberwithin{equation}{section}
\newcommand\degg{\, \mathsf{deg} \,}
\newcommand\spann{\, \mathsf{span}}
\DeclareMathOperator{\id}{id}
\title{Into Multiplier Hopf ($*$-)Graph Algebras}
\author[1,2,*]{Farrokh Razavinia\thanks{Supported by the Azarbaijan Shahid Madani University under grant No. 117.d.22844 - 08.07.2023} \thanks{This research was in part also supported by a grant from IPM (No. 1403170014)}
}
\affil[1]{Department of Mathematics, Azarbaijan Shahid Madani University, Tabriz 5375171379, Iran}
\affil[2]{School of Mathematics, Institute for Research in Fundamental Sciences (IPM), P.O. Box: 19395‐5746, Tehran, Iran}
\affil[]{f.razavinia@phystech.edu}
\date{}
\begin{document}

\maketitle

\begin{abstract}
This paper is concerned with the structures introduced recently by the authors of the current paper concerning the multiplier Hopf $*$-graph algebras and also the Cuntz-Krieger algebras and their relations with the $C^*$-graph algebras, and once again by using the $C^*$-graph algebra constructions associated to our toy example, to initiate our first class of examples concerning the multiplier Hopf $*$-graph algebras.

At the final part of the paper, we apply our study to the $SL(n)$ case over the field of complex numbers, and prove that $(\mathcal{O}(SL(n),\Delta)$ possesses the initial requirements of being a discrete quantum group in the sense of Van Daele, and propose a direction in approaching one step further to the problem raised by Wang, asking ``if finite groups of Lie type have an analogue of $q$-deformations into finite quantum groups?''.
\newline\newline
\textbf{MSC Numbers (2020)}: Primary 46L05; Secondary 46L67, 81P40, 46L55, 17B81.
 \hfill \newline
\textbf{Keywords}: multiplier Hopf algebra; magic unitary; quantum graph; graph algebra;  $C^*$-algebra; Cuntz-Krieger algebra; quantum permutation group; discrete quantum group; finite groups of Lie type.
\end{abstract}

\section{Notations}
Throughout this paper, $\mathbb K$ will stand for the ground field, and it will be assumed to be arbitrary unless otherwise stated.

We often will simply denote the matrix algebra $M_d(\mathbb C)$ of $d\times d$ complex-valued matrices, with $M_d$, and the identity $d\times d$ matrix with $I_d$, and $M_q(n)$  will stand for the space of $n\times n$ quantum matrices, and by $\mathbb K[M_q(n)]$ we mean the ring of coordinate ring of $M_q(n)$.

$\mathcal{G}_n$ will stand for the directed locally finite connected graph associated with the defining relations of $\mathbb K[M_q(n)]$, and $\Pi_n$    represents the adjacency matrix of $\mathcal{G}_n$. After that, we will consider $\mathcal{G}_n:=\mathcal{G}(\Pi_n)$.

By $u=(u_{ij})_{i,j}$, we mean the magic unitary matrix and $\pi_n$ will stand for the unique commuting magic unitary matrix with $\Pi_n$.

For a Hilbert space $\mathcal{H}$, we will consider the set of all (bounded) linear operators from $\mathcal{H}$ to $\mathbb K$, with $B(\mathcal{H},\mathbb{K})$, and we write $B(\mathcal{H})\equiv B(\mathcal{H},\mathcal{H})$ for the set of all linear operators acting on $\mathcal{H}$.

We use $*$ to present the complex conjugate transpose of vectors and matrices. 

We will use $\overset{\leftarrow}{\degg}_{hk}$ in order to show the entry degree of the vertex $e_{hk}$, and by entry degree we mean the number of edges that are entering to the vertex $e_{hk}$, and analogously we have exit degree of the vertex $e_{hk}$, which will be considered by $\overset{\rightarrow}{\degg}_{hk}$.

\section{Introduction}\label{S:intro}
Going back to the mid-eighties, there is the fascinating constructive definition and the only exact and fulfilled definition of a category of quantum groups, known as the compact (matrix) quantum groups studied and developed by Woronowicz \cite{Wor87}, started from a compact (matrix) group $G$ and followed by considering the algebra of continuous complex-valued functions $C(G)$, equipped with a commutative Hopf algebra structure with respect to the pointwise multiplication, and making it into a commutative $C^*$-algebra, in fulfilment of generalizing the concept of a (compact) group, defined as follows:

A compact quantum group is a pair $G=(A,\Delta)$, for $A$ a $C^*$-algebra and $\Delta$ a unital $*$-homomorphism from $A$ to $A\otimes A$, called comultiplication, satisfying the coassociativity relation $(\Delta\otimes id)\circ\Delta=(id\otimes\Delta)\circ\Delta$, and the cancellation properties
\begin{align*}
    &\Delta(A)(1\otimes A)=\spann\{\Delta(a)(1\otimes b) | a,b\in A\}\\& \Delta(A)(A\otimes 1)=\spann\{\Delta(a)(b\otimes 1) | a,b\in A\},
\end{align*}
dense in $A\otimes A$, due to Woronowicz.

Our concern is more about the compact quantum groups arising from the (semi-)group algebras. For the compact group $G$, we can see $\mathbb CG$ as the group $C^*$-algebra associated with $G$, consisting of the set of finite linear combinations $\sum_{g\in G}c_gg$, for $c_g\in\mathbb C$, with the multiplication adopted from the group multiplication and equipped with the involution $\left(\sum c_gg\right)^*:=\sum \overline{c}_gg^{-1}$, isomorphic with the universal $C^*$-algebra
$$C^*\left(c_g | c_g ~\text{unitary}, ~c_gc_h=c_{gh}, c_{g}^{*}=c_{g^{-1}}\right).$$

Following the above discussion, and in order to provide an answer to a question by Connes, asking if ``there are quantum permutation groups, and what would they look like? \cite{Ban20}'', in late nineties, Wang came with an answer, saying that ``the quantum permutation group $S_{n}^{+}$ could be defined as the largest compact quantum group acting on the set $\{1,\ldots, N\}$ \cite{Ban20},'' by looking at it as the compact set $X_N:=\{x_1,\ldots, x_N\}$ consisting of a finite set of points (pointwise isomorphic) and studying its function space $C(X_N)\equiv C^*\left(p_1,\cdots,p_N ~\text{projections}~ | \sum_{i=1}^{N}p_i=1\right)$. This has led him to define the following
$$C(S_{n}^{+}):=C^*\left( u_{ij}, {i,j=1,\cdots,n}\mid u_{ij}=u_{ij}^{*}=u_{ij}^{2}, \sum_{k=1}^{n}u_{kj}=\sum_{k=1}^{n}u_{ik}=1 \right),$$
and calling $S_{n}^{+}=(C(S_{n}^{+}),u)$ the quantum symmetric (permutation) group as the quantum automorphism group of $X_N$, and proving that it satisfies the relations of being a compact (matrix) quantum group in the sense of Woronowicz. The main ingredients in defining $C(S_{n}^{+})$, meaning that the $u_{ij}$s, are very important to us in our construction of the ($*$-)multiplier Hopf graph algebras, and we have the following definition:
\begin{defn}\label{Def:MUM}
   Matrix $u=(u_{ij})_{i,j}$ with entries $u_{ij}$s from a non-trivial unital $C^{*}$-algebra satisfying relations $u_{ij}=u_{ij}^{*}=u_{ij}^{2}$ and $\sum_{k=1}^{n}u_{kj}=\sum_{k=1}^{n}u_{ik}=1$, will be called a magic unitary.
\end{defn}
\begin{remark}
    \begin{enumerate}
        \item Entries of the magic unitary matrix are projections, and all the distinct elements of the same row or same column are orthogonal, and sums of rows and columns are equal to 1. 
        \item A magic unitary matrix $u$ is orthogonal, meaning that we have $u=\overline{u}$ and $uu^t=I_n=u^tu$.
		\item If an element $p$ satisfies $p^2=p^{*}=p$, then it will be called a projection, and two projections will be orthogonal if we have $pq=0$, and a partition of the unity is a finite set of mutually orthogonal projections, and sum up to 1.
    \end{enumerate}
\end{remark}
Recently, in \cite{RV22}, Rollier and Vaes put one step forward and applied the above constructions to the connected locally finite graphs, and more than that, they used this association in order to make a bridge between the already known abstract concept of the multiplier Hopf algebras, introduced and studied by Van Daele \cite{VD94}, to a more intuitive field of Graph Theory, in the form of the following theorem
\begin{thm}\cite{RV22}\label{Th:RV}
    For $\Pi$ a locally connected finite graph with vertex set $I$, there exist a (necessarily unique) universal nondegenerate $*$-algebra $\mathcal{A}$ generated by the elements $u_{ij}$ satisfying the relations of the magic unitary matrix in Definition \ref{Def:MUM}, and a unique nondegenerate $*$-homomorphism $\Delta:\mathcal{A}\to M(\mathcal{A}\otimes \mathcal{A})$ taking $u_{ij}$ to $\sum_{k\in I}(u_{ik}\otimes u_{kj})$ for all $i,j\in I$, such that the pair $(\mathcal{A},\Delta)$ is a multiplier Hopf $*$-algebra in the sense of \cite{VD94}. 
\end{thm}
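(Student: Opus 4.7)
The plan is to construct $\mathcal{A}$ as a universal quotient of a free $*$-algebra, then define $\Delta$ by exploiting the local finiteness of $\Pi$ to control infinite sums, and finally verify Van Daele's axioms for a multiplier Hopf $*$-algebra. I would first let $F$ be the free unital $*$-algebra on symbols $\{u_{ij}:i,j\in I\}$ and set $\mathcal{A}:=F/J$, where $J$ is the two-sided $*$-ideal generated by the magic unitary relations of Definition \ref{Def:MUM} (self-adjointness and idempotency of each $u_{ij}$, pairwise orthogonality of distinct entries sharing a row or a column, and the row/column completeness interpreted in the multiplier sense when $I$ is infinite) together with the graph-compatibility forced by $\Pi$. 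Local finiteness of $\Pi$ guarantees that every individual relation involves only finitely many generators, so $\mathcal{A}$ is well-defined, and its universal property (together with uniqueness up to canonical isomorphism) is tautological from the construction.

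Next, I would verify nondegeneracy by producing enough faithful $*$-representations, in particular the classical quotient corresponding to the coordinate algebra of the ordinary automorphism group of $\Pi$, supplemented by restrictions of Wang's quantum symmetric groups to finite induced subgraphs; these show $\mathcal{A}\ne 0$ and $\mathcal{A}\cdot\mathcal{A}=\mathcal{A}$. With nondegeneracy in hand, $M(\mathcal{A}\otimes\mathcal{A})$ makes sense, and I would declare $\Delta(u_{ij}):=\sum_{k\in I}u_{ik}\otimes u_{kj}$ as a multiplier: for any $x\in\mathcal{A}\otimes\mathcal{A}$, the product $\Delta(u_{ij})\cdot x$ lives in a finite-dimensional subalgebra generated by the finitely many rows and columns supporting $x$, and the row/column orthogonality combined with local finiteness of $\Pi$ annihilates all but finitely many terms. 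Checking that $\Delta$ preserves each defining relation on generators is direct, and extends $\Delta$ to a unique nondegenerate $*$-homomorphism $\mathcal{A}\to M(\mathcal{A}\otimes\mathcal{A})$.

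Finally, to conclude that $(\mathcal{A},\Delta)$ is a multiplier Hopf $*$-algebra in the sense of Van Daele, I would verify coassociativity together with the bijectivity of the canonical maps $T_1,T_2\colon\mathcal{A}\otimes\mathcal{A}\to\mathcal{A}\otimes\mathcal{A}$ given by $T_1(a\otimes b)=\Delta(a)(1\otimes b)$ and $T_2(a\otimes b)=(a\otimes 1)\Delta(b)$. Coassociativity unfolds on generators since both $(\Delta\otimes\id)\Delta(u_{ij})$ and $(\id\otimes\Delta)\Delta(u_{ij})$ become $\sum_{k,\ell}u_{ik}\otimes u_{k\ell}\otimes u_{\ell j}$, with the interchange of summation legitimized by local finiteness after pairing with an element of $\mathcal{A}^{\otimes 3}$. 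Bijectivity of $T_1,T_2$ is the delicate part, handled by writing down explicit algebraic inverses on simple tensors using the orthogonality identities of the magic unitary entries, which reduce all formal infinite sums to finite ones.

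The principal obstacle I anticipate is the multiplier-algebra bookkeeping in the infinite setting: rigorously establishing nondegeneracy of $\mathcal{A}$ and then ensuring that the candidate $\Delta$ genuinely lands in $M(\mathcal{A}\otimes\mathcal{A})$ rather than in a larger formal completion, plus verifying that $T_1$ and $T_2$ take values in $\mathcal{A}\otimes\mathcal{A}$ (and not merely in multipliers), demands careful combinatorial control using both the magic unitary orthogonality and the local finiteness of $\Pi$. Once this analytical/combinatorial scaffolding is in place, the Hopf-algebraic identities reduce to the same projection relations that govern Wang's finite quantum symmetric group $C(S_n^+)$.
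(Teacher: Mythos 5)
First, a point of comparison: the paper does not prove Theorem \ref{Th:RV} at all; it is quoted from \cite{RV22} as background, so your proposal can only be measured against the argument given there. Measured that way, your outline has the right skeleton (universal $*$-algebra, multiplier-valued $\Delta$, coassociativity plus bijectivity of $T_1,T_2$), but it contains two genuine gaps. The first is that the construction of $\mathcal{A}$ is circular as stated: when $I$ is infinite, the completeness relations $\sum_k u_{ik}=1=\sum_k u_{ki}$ cannot be placed in the ideal $J$ of a free unital algebra, because the sums are infinite and only make sense in $M(\mathcal{A})$, an object that presupposes $\mathcal{A}$ already constructed and already nondegenerate. In \cite{RV22} the algebra is presented by the finitely expressible relations only (projectivity, row and column orthogonality, and commutation of $u$ with the adjacency matrix of $\Pi$), and the identity $\sum_k u_{ik}a=a$ is then proved as a statement about multipliers rather than imposed as a relation. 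Your parenthetical ``interpreted in the multiplier sense'' names the problem but does not resolve it.

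The second and more serious gap is your claim that the finiteness of $\Delta(u_{ij})(x\otimes y)=\sum_k u_{ik}x\otimes u_{kj}y$ follows from ``row/column orthogonality combined with local finiteness.'' Orthogonality only kills products whose two factors share a row or a column index; for a generic generator $u_{ab}$ the product $u_{ik}u_{ab}$ is not annihilated by orthogonality for any $k$, so orthogonality alone does not reduce the sum over $k$ to a finite one. The actual mechanism in \cite{RV22} is the relation $uA_{\Pi}=A_{\Pi}u$ (which never appears in your list of relations), which together with connectedness yields the key lemma that $u_{ik}u_{ab}=0$ unless $d(i,a)=d(k,b)$ in the graph metric; since spheres in a connected locally finite graph are finite, only finitely many $k$ survive. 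Without connectedness and without this commutation relation the candidate $\Delta(u_{ij})$ need not define a multiplier and the theorem fails. The same distance lemma is what drives nondegeneracy: exhibiting the classical quotient and Wang-type representations on finite subgraphs shows $\mathcal{A}\neq 0$, but it does not show that $ax=0$ for all $x$ forces $a=0$, nor does it supply the explicit inverses of $T_1$ and $T_2$ that you defer to unspecified ``orthogonality identities.''
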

Following theorem \ref{Th:RV}, in \cite{RH24}, by using the $(n^2-2)$-connected locally finite graphs $\mathcal{G}_i=\{ \mathcal{G}(\pi_i)\mid i\in\{1,\cdots n\} \}$ associated with the adjacency matrices $\Pi_i$ of the coordinate ring of the quantum matrix algebras $M_q(n)$ and their commuting matrices $\pi_i$, for $i\in I=\{1,\cdots,n\}$, we showed that the set $\mathcal{G}_i$ possesses a nondegenerate $*$-monoid algebra structure equiped with the following binary operations
\begin{align}
	& \pi_i+\pi_j:=\left(V_i\cup V_j, E_i\cup E_j \right)\notag \\& \pi_i\to \pi_j:=\left(V_i\cup V_j, E_i\cup E_j \right),\label{GA:1}
	\end{align}
and the identity element $\pi_2$, and the  diagrammatic illustration (\ref{fig1}).


\begin{figure}
\begin{tikzpicture}[font=\sffamily]
\foreach \X [count=\Y starting from 2] in {$\mathcal{G}(\pi_2):=\mathcal{G}_2$,$\mathcal{G}(\pi_3):=\mathcal{G}_3$,$\mathcal{G}(\pi_4):=\mathcal{G}_4$,$\cdots\cdots\cdots$,$\mathcal{G}(\pi_n):=\mathcal{G}_n$}
{\draw (-\Y,-\Y/2) circle ({1.25*\Y} and \Y);
\node at (1-2*\Y,-1.1*\Y) {\X}; }
\end{tikzpicture}
    \caption{Illustration of the set of $n-1$ graphs $\mathcal{G}_i$}
    \label{fig1}
\end{figure}
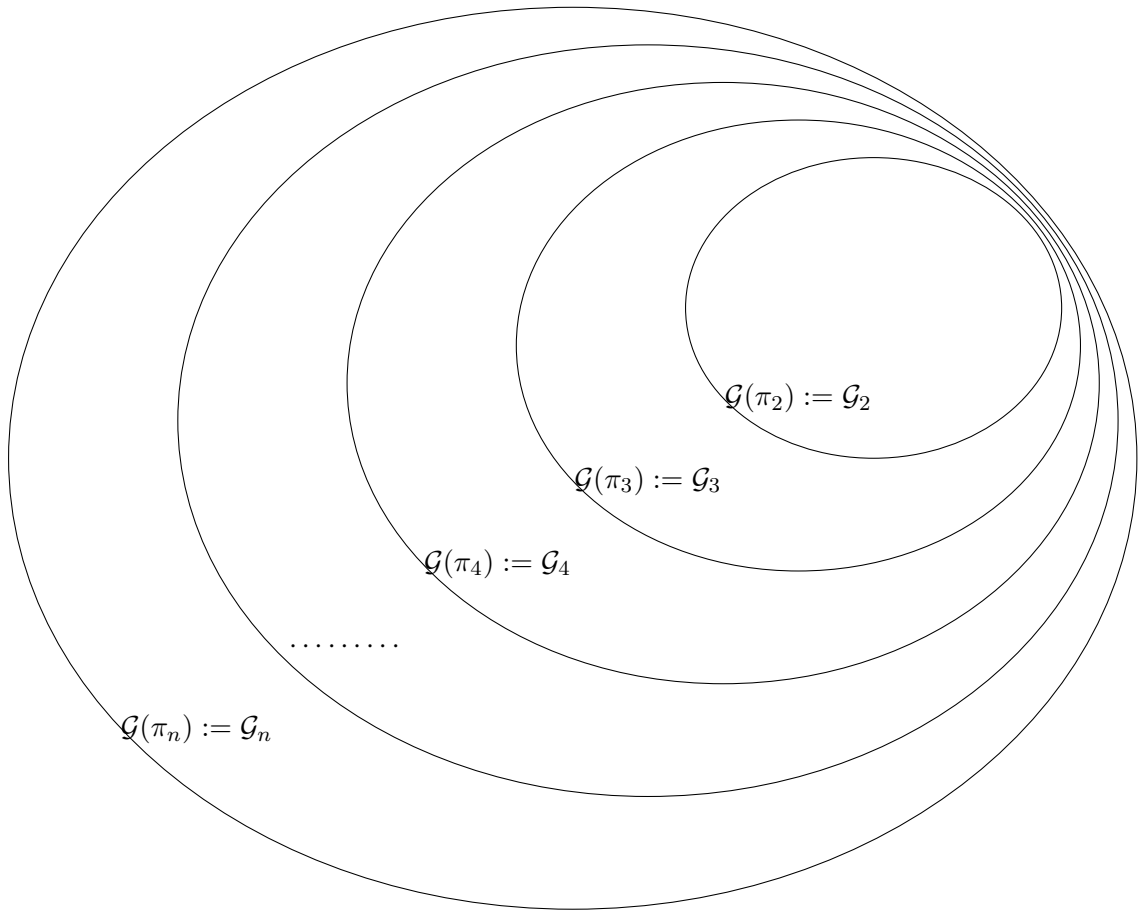



We refer the interested reader to \cite[Section 9]{RH24} for more information regarding the set of graphs $\mathcal{G}_i$, and quantum permutation groups $S_{n}^{+}$.

\subsection{Cuntz-Krieger Algebras}
Here in this paper, our developments are mostly concerned with the finite directed graphs. Talking about the Cuntz-Krieger algebras, we need to take some steps backwards and talk about the Cuntz algebras were introduced by Cuntz in 1977 as a class of simple purely infinite $C^*$-algebras generated by isometries ($S$ is an isometry if $S^*S=\id$) satisfying certain relations \cite{C77}.
\begin{defn}
  Let $n\geq2$. A universal $C^*$-algebra generated by isometries $S_1, S_2, \cdots, S_n$ satisfying in the following relations
   $$S_{1}^{*}S_1=S_{2}^{*}S_2=\cdots=S_{n}^{*}S_n=\id, \qquad \ \sum_{i=1}^{n}S_iS_{i}^{*}=\id,$$
   will be called a Cuntz algebra and will be denoted by $\mathcal{O}_n$.
\end{defn}

Following the above definition, to a directed graph $\Gamma$, one can associate a $C^*$-algebra $C^*(\Gamma^0,\Gamma^1):=C^*(\Gamma)$ by associating to its set of edges $\Gamma^1$ a set of partial isometries and to its set of vertices $\Gamma^0$ a set of pairwise orthogonal projections (Hilbert spaces) satisfying in some specific relations, studied first by Cuntz and Krieger in 1980 \cite{CK80}, as a generalization of the Cuntz algebras.

By a directed graph $\Gamma=(\Gamma^0,\Gamma^1)$ we mean a set $\Gamma^0$ of vertices and a set $\Gamma^1$ of edges alongside with source and range maps $r,s:\Gamma^1\to\Gamma^0$, and accordingly a vertex $v\in \Gamma^0$ will be called a sink if and only if $s^{-1}(v)$ is empty, meaning that when $v$ emits no edges, and a source if and only if $r^{-1}(v)$ is empty, meaning that when $v$ receives no edges.
\begin{remark}
    \begin{enumerate}
        \item The graph $\Gamma$ will be called simple if the map $\Gamma^1\to\Gamma^0\times\Gamma^0$ taking $e$ to $(s(e),r(e))$ is injective. Meaning that it will be called simple if consists of no multiple edges.
        \item The directed graph $L\Gamma=(L\Gamma^0,L\Gamma^1)$ with $L\Gamma^0=\Gamma^1$ and $L\Gamma^1=\{(e_1,e_2) | r(e_1)=s(e_2)\}$ as a subset of $\Gamma^1\times\Gamma^1$ will be called the line graph of $\Gamma$.
        \item Any line graph is simple.
        \item The $\Gamma^0\times\Gamma^0$-matrix $A_{\Gamma}(e)=\begin{cases}
            1 \quad e\in \Gamma^1,\\
            0 \quad e\notin \Gamma^1,
        \end{cases},$  will be called the adjacency matrix of $\Gamma=(\Gamma^0,\Gamma^1)$.
        \item The $\Gamma^1\times\Gamma^1$-matrix $E_{\Gamma}(e_1,e_2)=\begin{cases}
            1 \quad r(e_1)=s(e_2),\\
            0 \quad else,
        \end{cases},$  will be called the edge matrix of $\Gamma=(\Gamma^0,\Gamma^1)$.
        \item The edge matrix $E_{\Gamma}$ of $\Gamma$ equals the adjacency matrix $A_{L\Gamma}$ of $L\Gamma$.
    \end{enumerate}
\end{remark}
Before preparing for the main part of the paper, let us consider some preliminary definitions.
\begin{defn}\label{Def:PI}
    An $n\times n$ matrix $A$ will be called a partially isometric matrix (partial isometry), if $AA^*A=A$ satisfies.
\end{defn}
\begin{defn}\label{Def:OP}
    An $n\times n$ matrix $P$ will be called an orthogonal projection, if $P$ simultaneously is Hermitian and idempotent, meaning that if we have $P=P^*$ and $P^2=P$.
\end{defn}
\begin{remark}\cite{GMR19}
    \begin{enumerate}
        \item It is easy to see that $A$ is a partial isometry if and only if $A^*$ is a partial isometry. 
        \item It is not too difficult to see that if $P$ is an orthogonal projection, then $P$ is a partial isometry.
        \item Any unitary matrix is a partial isometry, and any invertible partial isometry is unitary.
        \item We have that  $A$ is a partial isometry if and only if $A^*A$ and $AA^*$ are orthogonal projections.
    \end{enumerate}
\end{remark}

Let us now recall what we mean by a graph $C^*$-algebra of a finite directed graph $\Gamma=(\Gamma^0,\Gamma^1)$.

For a finite directed graph $\Gamma$, and a finite or infinite dimensional Hilbert space $\mathcal{H}$, the set of mutually orthogonal projections $p_v\in\mathcal{H}$ for all $v\in\Gamma^0$ together with partial isometries $s_e\in\mathcal{H}$ for all $e\in\Gamma^1$ satisfying the relations

\begin{enumerate}\label{Equ:S}
    \item $s_{e}^{*}s_e=p_{r(e)}$ for all edges $e\in\Gamma^1$,\label{Equ:S:1}
    \item $p_v=\sum_{s(e)=v}s_es_{e}^{*}$ for the case when $v\in\Gamma^0$ is not a sink,\label{Equ:S:2}
\end{enumerate}
will be called a Cuntz-Krieger $\Gamma$-family in $C^*$-algebra $\mathcal{C}$, and we have the following definition:
\begin{defn}\cite{BEVW22}\label{Def:GCA}
    For finite directed graph $\Gamma=(\Gamma^0,\Gamma^1)$, the graph $C^*$-algebra $\mathcal{C}^*(\Gamma)$ is the universal $C^*$-algebra generated by a Cuntz-Krieger $\Gamma$-family $\{P_v,S_e\}$.
\end{defn}
\begin{remark}
    By a universal $C^*$-algebra in Definition \ref{Def:GCA}, we mean that for any other given Cuntz-Krieger $\Gamma$-family consisting of projections $p_v$ and partial isometries $s_e$ in $C^*$-algebra $\mathcal{C}$, there exists a unique $*$-homomorphism $\psi:C^*(\Gamma)\to B(\mathcal{H})$ such that $\psi(P_v)=p_v$ and $\psi(S_e)=s_e$.
\end{remark}
Now we are prepared to consider those $n\times n$ matrices with entries in $\{0,1\}$. By considering the relations \ref{Equ:S} of a Cuntz-Krieger $\Gamma$-family in $\mathcal{H}$, of partial isometries $s_i$ and orthogonal projections $p_i$, for $i\in\{1,\cdots,n\}$, with mutually orthogonal ranges, we will have the following Cuntz-Krieger relations
\begin{equation}\label{Equ:KC}
    s_{i}^{*}s_i=\sum_{j=1}^{n}a_{ij}s_js_{j}^{*}.
\end{equation}

Note that we will only consider the nondegenerate $*$-algebras, and hence $C^*$-algebras. We have the following Definition:
\begin{defn}
    For $n\times n$ matrix $\Pi\in M_n(0,1)$, the Cuntz-Krieger algebra $\mathcal{K}_{\Pi}$ will be defined as the (nondegenerate) $C^*$-algebra generated by a universal Cuntz-Krieger $\Gamma$-family $S_i$ for $i\in\{1,\cdots,n\}$ satisfying in \ref{Equ:KC}.
\end{defn}
\begin{remark}
    In the literature, in order to define a Cuntz-Krieger algebra the assumption of working with a nondegenerate matrix (having no sources and sinks) is assumed essential. But here in this paper, we won't make any further hypotheses on our matrices. So, it could be for instance that all the entries are zero, and hence in that case the algebra we get will be very degenerate and not good in order to work with concerning the multiplier Hopf algebras!
\end{remark}
\section{Graph $C^*$-algebras}

Going back to our construction made in \cite[Proposition 9]{RH24}, also Proposition \ref{Prop:mHA:graph} in the current paper, we can associate the following degenerate Cuntz-Krieger algebra $\mathcal{K}_{\Pi_n}$ to the graphs $\Pi_n$ associated with the coordinate ring $\mathbb K[M_q(n)]$. But before we get into this problem, let us recall a very important fact concerning the Cuntz-Krieger algebras and their relations. In what follows let $\mathcal{H}$ be as before.
For any finite locally connected (directed) graph $E=(E^0,E^1)$, it is well known that we have $P_v\mathcal{H}=\left(\sum_{\{e\in E^1\mid r(e)=v\}}S_eS_{e}^{*}\right)\mathcal{H}=\oplus_{\{e\in E^1\mid r(e)=v\}}S_e\mathcal{H}$.

Now let us consider the very initial part of our toy example. For graph $\mathcal{G}(\Pi_2)$ associated with the coordinate ring of $M_q(2)$, consider its set of vertices and edges as $\mathcal{G}^0=\{x_{11}:=u, x_{12}:=v, x_{22}:=k, x_{21}:=w\}$ and $\mathcal{G}^1=\{x_{11}\overrightarrow{\sim}x_{12}:=e, x_{11}\overrightarrow{\sim}x_{21}:=f, x_{12}\overrightarrow{\sim}x_{22}:=h, x_{21}\overrightarrow{\sim}x_{22}:=g, x_{12}\overrightarrow{\sim}x_{21}:=i, x_{21}\overrightarrow{\sim}x_{12}:=j\}$, 

\vspace*{0.3cm}

\hspace*{2.7cm} \begin{tikzpicture}\label{Gra:2}
\tikzset{vertex/.style = {shape=circle,draw,minimum size=0.7em}}
\tikzset{edge/.style = {->,> = latex'}}
\node[vertex] (a) at  (0,0) {$u$};
\node[vertex] (b) at  (4,3) {$v$};
\node[vertex] (c) at  (8,0) {$k$};
\node[vertex] (d) at  (4,-3) {$w$};
\draw[edge] (a) to node[above] {e} (b);
\draw[edge] (b) to node[above] {h} (c);
\draw[edge] (a) to node[below] {f} (d);
\draw[edge] (d) to node[below] {g} (c);




\draw[edge] (b) to[bend left] node[right] {i} (d);
\draw[edge] (d) to[bend left] node[left] {j} (b);

\end{tikzpicture}

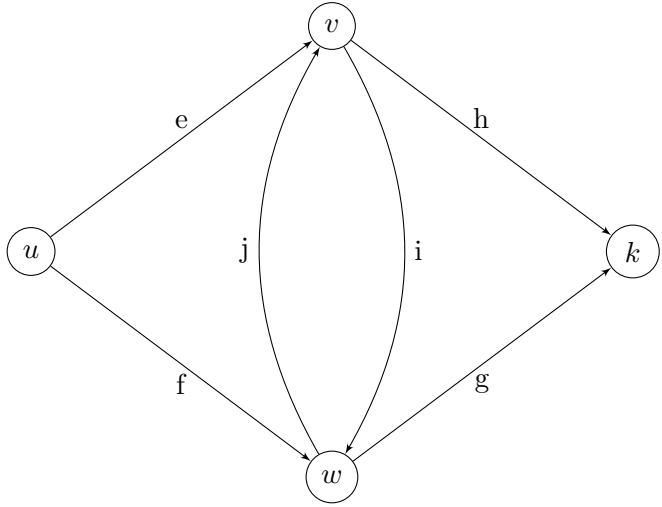
\captionof{figure}{\textbf{Directed locally connected graph related to $\Pi_2$}}

\vspace*{0.3cm}

and we have the following Proposition.

\begin{prop}\label{Prop:CKQ:1}
For $\mathcal{G}(\Pi_2)=(\mathcal{G}^0,\mathcal{G}^1)$ as in \cite[Proposition 9]{RH24}, also Proposition \ref{Prop:mHA:graph} in the current paper, and $\Pi_2$ the associated adjacency matrix. Let $\mathcal{H}:=\ell^2(\mathbb N)$ be the underlying infinite dimensional Hilbert space. Then the set 
\begin{align}
    S=\{& S_e:=\sum_{n=1}^{\infty}E_{6n,3n-2}, S_f:=\sum_{n=1}^{\infty}E_{6n-4,3n-2}, S_h:=\sum_{n=1}^{\infty}E_{6n-3,3n},\notag \\&S_g:=\sum_{n=1}^{\infty}E_{6n-4,3n-1}, S_i:=\sum_{n=1}^{\infty}E_{6n-1,3n}, S_j:=\sum_{n=1}^{\infty}E_{6n-3,3n-1}\}\label{Equ:PIs}
\end{align}
is a Cuntz-Krieger $\mathcal{G}$-family and gives us a graph $C^*$-algebra structure $\mathcal{C}^*(\Pi_2)$. 
\end{prop}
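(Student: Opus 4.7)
My plan is to verify directly that the six operators in $S$, together with explicitly defined vertex projections, form a Cuntz--Krieger $\mathcal{G}$-family in the sense of Definition \ref{Def:GCA}, and then invoke the universal property of the graph $C^{*}$-algebra to conclude. The verification decomposes naturally into three steps.

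First, each element of $S$ should be a partial isometry on $\mathcal{H}=\ell^{2}(\mathbb{N})$. This is the easy part: using the standard matrix-unit relations $E_{m,n}E_{p,q}=\delta_{np}E_{m,q}$ and $(E_{m,n})^{*}=E_{n,m}$, one computes for example
\[
    S_{e}^{*}S_{e}=\sum_{n,m}E_{3n-2,6n}E_{6m,3m-2}=\sum_{n\geq 1}E_{3n-2,3n-2},\qquad S_{e}S_{e}^{*}=\sum_{n\geq 1}E_{6n,6n},
\]
and analogous expressions for $S_{f},S_{g},S_{h},S_{i},S_{j}$. Each such product is a diagonal operator with entries in $\{0,1\}$, hence an orthogonal projection in the sense of Definition \ref{Def:OP}, so every operator in $S$ is a partial isometry by the characterization recorded just after Definition \ref{Def:OP}.

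Second, I would introduce the candidate vertex projections as prescribed by the second Cuntz--Krieger relation in display (\ref{Equ:S}), namely $P_{u}:=S_{e}S_{e}^{*}+S_{f}S_{f}^{*}$, $P_{v}:=S_{h}S_{h}^{*}+S_{i}S_{i}^{*}$, $P_{w}:=S_{g}S_{g}^{*}+S_{j}S_{j}^{*}$, and $P_{k}:=0$ since $k$ is a sink. Each such $P_{\bullet}$ is automatically an orthogonal projection, so the second Cuntz--Krieger relation holds by definition; what still has to be checked is that the four $P_{\bullet}$ are pairwise orthogonal, which reduces to verifying that the diagonal supports of the $S_{\bullet}S_{\bullet}^{*}$ attached to edges with different source vertices pick out disjoint subsets of $\mathbb{N}$.

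The main obstacle is the third step, namely the first Cuntz--Krieger relation $S_{e}^{*}S_{e}=P_{r(e)}$ for every edge. The computation from step one shows that $S_{e}^{*}S_{e}$ is the diagonal projection onto the column residue class $\{3n-2:n\geq 1\}$, whereas $P_{r(e)}$ is a sum of diagonal projections attached to the two edges leaving $r(e)$. The required identity therefore reduces to an equality of subsets of $\mathbb{N}$ cut out by residues modulo $6$, and for each of the six edges one has to check that the column residue class at its range coincides exactly with the union of the two row residue classes of the outgoing edges there. This case-by-case compatibility between the enumeration of the three non-sink vertices in blocks of $3$ and the enumeration of the six edges in blocks of $6$ is the combinatorial heart of the argument. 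Once verified, $\{P_{u},P_{v},P_{w},P_{k}\}\cup S$ is a Cuntz--Krieger $\mathcal{G}$-family in $B(\mathcal{H})$, and the universal property recorded in the remark following Definition \ref{Def:GCA} then produces a unique $*$-homomorphism $\psi:\mathcal{C}^{*}(\Pi_{2})\to B(\mathcal{H})$ sending the universal generators to our explicit operators, exhibiting the desired graph $C^{*}$-algebra structure $\mathcal{C}^{*}(\Pi_{2})$.
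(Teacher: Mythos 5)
Your steps one and two are fine as far as they go, but the verification you defer to step three is exactly where the content lives, and with the vertex projections you have chosen it does not merely require patience---it fails. Concretely: $S_f=\sum_n E_{6n-4,3n-2}$ and $S_g=\sum_n E_{6n-4,3n-1}$ have the same row support, so $S_fS_f^{*}=S_gS_g^{*}=\sum_n E_{6n-4,6n-4}$, and hence your $P_u=S_eS_e^{*}+S_fS_f^{*}$ and $P_w=S_gS_g^{*}+S_jS_j^{*}$ are not orthogonal. Likewise $S_e^{*}S_e=\sum_n E_{3n-2,3n-2}$ (supported on the residue class $3n-2$), whereas your $P_{r(e)}=P_v=S_hS_h^{*}+S_iS_i^{*}=\sum_n\left(E_{6n-3,6n-3}+E_{6n-1,6n-1}\right)$ is supported on $\{6n-3\}\cup\{6n-1\}$, so the relation $S_e^{*}S_e=P_{r(e)}$ you set out to check is simply false. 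Finally, putting $P_k:=0$ at the sink contradicts the first Cuntz--Krieger relation for the edges $h,g$ with range $k$, since $S_h^{*}S_h=\sum_n E_{3n,3n}\neq 0$.

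The source of the mismatch is that the six operators were built for the opposite pairing of the two Cuntz--Krieger relations from the one you adopted (reasonably, following the displayed relations before Definition \ref{Def:GCA}). The paper's own proof instead verifies $S_x^{*}S_x=P_{s(x)}$, giving $S_e^{*}S_e=S_f^{*}S_f=P_u$, $S_h^{*}S_h=S_i^{*}S_i=P_v$, $S_g^{*}S_g=S_j^{*}S_j=P_w$ with supports the residue classes $3n-2$, $3n$, $3n-1$, and decomposes vertex projections over \emph{incoming} edges, $P_v=S_eS_e^{*}+S_jS_j^{*}$ and $P_w=S_fS_f^{*}+S_iS_i^{*}$; with that pairing these identities do hold, as the $6n$, $6n-3$ rows tile the class $3n$ and the $6n-4$, $6n-1$ rows tile the class $3n-1$. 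Even then, the projection $P_k=S_hS_h^{*}+S_gS_g^{*}$ assigned to the sink is supported on $\{6n-3\}\cup\{6n-4\}$, which meets the supports of $P_v$ and $P_w$, so mutual orthogonality of the vertex projections still needs to be addressed (a point the paper's proof also passes over). In short, the residue-class bookkeeping cannot be left as an exercise: carried out with your definitions it refutes the claim rather than proving it, and it must be redone with the source/range roles matched to how the operators were constructed.
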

\begin{proof}
    There can be finite or infinite dimension sets of projections. Hence, first, we need to clarify that on which of these spaces we live in!

    For $\{S,P\}\in B(\mathcal{H}):=B(\ell^2(\mathbb N))$, let us start by looking at the constructions on the dimensions of the subspaces $P_v\mathcal{H}, P_w\mathcal{H}, P_u\mathcal{H}$ and $P_k\mathcal{H},$ in order to see that if we need to consider finite-dimensional set of projections or not. We have

     \begin{align}
&\dim(P_k\mathcal{H})=\dim(S_g\mathcal{H})+\dim(S_h\mathcal{H})=\dim(P_v\mathcal{H})+\dim(P_w\mathcal{H})\label{Equ:P1} \\& \dim(P_v\mathcal{H})=\dim(S_e\mathcal{H})+\dim(S_j\mathcal{H})=\dim(P_u\mathcal{H})+\dim(P_w\mathcal{H})\label{Equ:P2}\\&\dim(P_w\mathcal{H})=\dim(S_f\mathcal{H})+\dim(S_i\mathcal{H})=\dim(P_u\mathcal{H})+\dim(P_v\mathcal{H}).\label{Equ:P3}
     \end{align}
From equations \ref{Equ:P1}, \ref{Equ:P2}, and \ref{Equ:P3}, it is not too difficult to conclude that our set of projections can not be finite-dimensional and we should have $\dim(P_v\mathcal{H})=\dim(P_w\mathcal{H})>\infty$, and as an already proved fact, we have that when one of the projections is infinite-dimensional, then they all are forced to be infinite-dimensional. Hence, let us try to construct an appropriate CK-$\Gamma$ family satisfying the equations \ref{Equ:S:1} and \ref{Equ:S:2}. The claim is that the desired set of partial isometries is as introduced in (\ref{Equ:PIs}).

First of all, in order to start proving the above claim, we have to prove the following assertions:
\begin{align*}
   & S_{e}^{*}S_e=P_u, \qquad \qquad \qquad S_{f}^{*}S_f=P_u, \qquad \qquad \qquad S_{h}^{*}S_h=P_v,\\& S_{g}^{*}S_g=P_w, \qquad \qquad \qquad S_{j}^{*}S_j=P_w, \qquad \qquad \qquad S_{i}^{*}S_i=P_v,\\& S_hS_{h}^{*}+S_gS_{g}^{*}=P_k, \qquad S_eS_{e}^{*}+S_jS_{j}^{*}=P_v, \qquad S_fS_{f}^{*}+S_iS_{i}^{*}=P_w.
\end{align*}
By following the above definitions, it is easy to see that we have 
\begin{align*}
    &S_{i}^{*}S_{i}=S_{h}^{*}S_{h}=\sum_{n=1}^{\infty}E_{3n,3n}\\&S_{j}^{*}S_{j}=S_{g}^{*}S_{g}=\sum_{n=1}^{\infty}E_{3n-1,3n-1}\\&S_{e}^{*}S_{e}=S_{f}^{*}S_{f}=\sum_{n=1}^{\infty}E_{3n-2,3n-2},
\end{align*}
and by using these results we see that the statements $S_hS_{h}^{*}+S_gS_{g}^{*}=P_k$, $S_eS_{e}^{*}+S_jS_{j}^{*}=P_v,$ and $S_fS_{f}^{*}+S_iS_{i}^{*}=P_w$ are satisfying. Meaning that $S_e, S_g, S_f, S_h, S_j,$ and $ S_i$ are the desired CK $\Gamma$-family, and the $C^*(S,P)$ is an infinite-dimensional $C^*$-algebra.
\end{proof}

\begin{remark}
\begin{enumerate}
    \item The number of edges in $\mathcal{G}(\Pi_n)$ is equal to $\frac{(n^3+n^2)(n-1)}{2}$.
    \item In what follows, let $\overset{\leftarrow}{\degg}_{hk}$ be the entry degree of the vertex $e_{hk}$. By entry degree we mean the number of edges that are entering to the vertex $e_{hk}$, and analogously we have exit degree of the vertex $e_{hk}$, which will be considered by $\overset{\rightarrow}{\degg}_{hk}$.
\end{enumerate}
\end{remark}

Let us now try to extend the result obtained in Proposition \ref{Prop:CKQ:1} to any $n$ and $\Pi_n$. We have the following claim
\begin{cl}\label{Cl:Op:1}
    For $\Pi_n$ as in Proposition \ref{Prop:mHA:graph} (\cite[Proposition 9]{RH24}), and $\mathcal{G}(\Pi_n)=(\mathcal{G}^0,\mathcal{G}^1)$ the associated locally finite directed graphs, let $\mathcal{H}:=\ell^2(\mathbb N)$ be the underlying infinite dimensional Hilbert space. Then the set 

    $$S=\{ S_i:=\sum_{j=1}^{\infty}\prescript{i}{}{E}_{\mathcal{E}j-A,(n^2-1)j-D} \mid \ \text{for fixed} \ 1\leq i\leq \frac{(n^3+n^2)(n-1)}{2}\}\label{Equ:PIs},$$

is a Cuntz-Krieger $\mathcal{G}$-family for $D\in\{0,\cdots,n^2-2\}$, and $\mathcal{E}$ depends on the degree of the exit edges to the vertex $e_{hk}$, where $i$ is considered as an exit edge, i.e. if $\overset{\rightarrow}{\degg}_{hk}=2$, then we will have $\mathcal{E}=2(n^2-1)$, and if it is 3, then we will have $\mathcal{E}=3(n^2-1)$, and so on, and $A\in\{0,\cdots,\overset{\rightarrow}{\degg}_{hk}\times(n^2-1)\}$, and gives us a graph $C^*$-algebra structure $\mathcal{C}^*(\Pi_n)$.

\end{cl}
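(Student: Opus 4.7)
The plan is to extend the explicit construction from Proposition \ref{Prop:CKQ:1} by producing, for each edge of $\mathcal{G}(\Pi_n)$, a matrix-unit-sum realization of a partial isometry on $\ell^2(\NN)$, and then verifying the two Cuntz--Krieger relations \ref{Equ:S:1} and \ref{Equ:S:2} by using the disjointness of the arithmetic progressions chosen for rows and columns. First I would organize the vertex set $\mathcal{G}^0 = \{e_{hk} : 1 \le h,k \le n\}$ by assigning to each vertex that is not a sink a distinct residue class $D_{hk} \in \{0,\ldots,n^2-2\}$ modulo $n^2-1$; the column support of the projection $P_{e_{hk}}$ will then be the arithmetic progression $\{(n^2-1)j - D_{hk} : j \ge 1\}$. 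This encodes the assignment ``column block of size $n^2-1 = |\mathcal{G}^0|-1$, one residue per vertex.''

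Next I would carry out the dimensional preliminary, mirroring equations \ref{Equ:P1}--\ref{Equ:P3}: from the structure of $\mathcal{G}(\Pi_n)$ recorded in \cite{RH24} each non-sink vertex has in-degree $\geq 2$, so the system $\dim(P_v\mathcal{H}) = \sum_{s(e)=v}\dim(P_{r(e)}\mathcal{H})$ is a strictly inflating recursion that rules out any finite-dimensional solution, forcing $\mathcal{H} = \ell^2(\NN)$. Once the columns are allocated, for each edge $i$ whose source is $e_{hk}$ I would set $\mathcal{E} := \overset{\rightarrow}{\degg}_{hk}(n^2-1)$ and pick an offset $A_i \in \{0,\ldots,\mathcal{E}-1\}$ so that the family $\{A_i : s(i) = e_{hk}\}$ covers $\overset{\rightarrow}{\degg}_{hk}$ distinct residue classes modulo $n^2-1$ inside the block of size $\mathcal{E}$. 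This is exactly the data prescribed in the claim's formula $S_i = \sum_{j \ge 1} \prescript{i}{}{E}_{\mathcal{E}j - A_i, (n^2-1)j - D_i}$; the key combinatorial requirement is that, as $i$ ranges over the edges leaving $e_{hk}$, the row supports $\{\mathcal{E}j - A_i : j \ge 1\}$ partition $\bigcup_j \{\mathcal{E}(j-1)+1,\ldots,\mathcal{E}j\}$ into $\overset{\rightarrow}{\degg}_{hk}$ disjoint arithmetic progressions.

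With this setup, verification of the first Cuntz--Krieger relation $S_i^*S_i = P_{r(i)}$ is immediate from multiplying the single matrix units: each $S_i^*S_i$ collapses to $\sum_{j}E_{(n^2-1)j - D_i, (n^2-1)j - D_i}$, which is precisely the projection onto the residue class encoding the range vertex. The second relation $P_{e_{hk}} = \sum_{s(i) = e_{hk}} S_iS_i^*$ (in the paper's convention for non-sinks) follows from the partition property of the $A_i$'s: the union of the row supports over all edges leaving $e_{hk}$ exhausts the residue class $(n^2-1)j - D_{hk}$, while pairwise disjointness gives the orthogonality of the individual $S_iS_i^*$.

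The main obstacle will be establishing that a globally consistent choice of the triples $(D_i, A_i, \mathcal{E})$ can in fact be made simultaneously for every edge of $\mathcal{G}(\Pi_n)$, because a single integer position on $\ell^2(\NN)$ is reused as a row index by an edge leaving one vertex and as a column index by an edge entering another, and these two roles must match up coherently across the graph. Concretely, when an edge $i$ has source $e_{hk}$ and range $e_{h'k'}$, the choice of $A_i$ (tied to $\overset{\rightarrow}{\degg}_{hk}$) must be compatible with $D_{h'k'}$ so that $S_iS_i^*$ lies inside the column-indexed support of $P_{e_{hk}}$. I would therefore approach this step by first exhibiting the assignment explicitly for $n=3$ and $n=4$, reading off the compatibility constraints from the adjacency matrix $\Pi_n$, and then arguing the general case either by induction on $n$ (using the nested structure of $\mathcal{G}_i$ illustrated in Figure \ref{fig1}) or, preferably, by a single combinatorial lemma stating that any directed graph with bounded in- and out-degrees admits such a ``residue-compatible'' labeling on $\ell^2(\NN)$; the existence of the graph $C^*$-algebra $\mathcal{C}^*(\Pi_n)$ of Definition \ref{Def:GCA} then yields the conclusion.
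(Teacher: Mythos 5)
The first thing to note is that the paper contains no proof of Claim \ref{Cl:Op:1} to compare against: the text following the Claim is a dangling cross-reference, and the concluding remarks explicitly list ``the Claim proposed in \ref{Cl:Op:1}'' among the problems that remain open. So your proposal is not an alternative route to the paper's argument; it is an attempt at a statement the paper itself leaves unproven, and it should be judged purely on its own completeness.

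On that standard, there is a genuine gap, and you have in fact named it yourself: the entire content of the Claim is the existence of a \emph{globally consistent} assignment of the data $(D_i, A_i, \mathcal{E})$, and your proposal defers exactly this to an unproven ``residue-compatible labeling'' lemma (or to unperformed computations for $n=3,4$ plus an unspecified induction). The local bookkeeping you describe is fine --- partitioning the arithmetic progression $\{(n^2-1)j - D_{hk}\}$ into $\overset{\rightarrow}{\degg}_{hk}$ subprogressions of common difference $\mathcal{E}=\overset{\rightarrow}{\degg}_{hk}(n^2-1)$ is elementary --- but the global step is where the difficulty lives, and several concrete obstructions are not addressed. There are $n^2$ vertices but only $n^2-1$ admissible residues $D\in\{0,\dots,n^2-2\}$, so the vertex $x_{11}$ (a source, hence with no incoming edge to determine its column support via relation \ref{Equ:S:1}) must be handled separately, and mutual orthogonality of \emph{all} the $P_v$ --- including those defined only through relation \ref{Equ:S:2} as sums $\sum_{s(e)=v}S_eS_e^*$ of row-supported projections --- is not verified; indeed in the paper's own $n=2$ model the analogous projection $P_k$ fails to be orthogonal to $P_v$ and $P_w$, which shows this is not a formality. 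Finally, your dimension argument misfires: you invoke ``each non-sink vertex has in-degree $\geq 2$'' to get a strictly inflating recursion, but $x_{11}$ is a non-sink with in-degree $0$, and the recursion $\dim(P_v\mathcal{H})=\sum\dim(P_{r(e)}\mathcal{H})$ you write sums over out-edges, not in-edges; the correct argument (as in equations \ref{Equ:P2}--\ref{Equ:P3}) forces $\dim(P_u\mathcal{H})=0$ in any finite model and needs nondegeneracy to conclude. As it stands the proposal is a plausible plan, not a proof, and the decisive combinatorial lemma still has to be stated precisely and proved.
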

We refer the interested reader to For a proof of this claim
Before proving the above claim, let us once again get back to our construction in Proposition \ref{Prop:mHA:graph} and try to present a nondegenerate Cuntz-Krieger algebra structure on the set of graphs $\mathcal{G}(\pi_n)$, for $\pi_n$ the associated commuting matrix with $\Pi_n$. To do this, let us as before, start with the lower dimension algebras, meaning that with $\pi_{2}=\begin{bmatrix}
1 & 0 & 0 & 0\\ 0 & 0 & 1 & 0\\ 0 & 1 & 0 & 0\\ 0 & 0 & 0 & 1
\end{bmatrix}$ and the associated graph $\mathcal{G}(\pi_2)$

\vspace*{0.3cm}

\hspace*{2.1cm} \begin{tikzpicture}
\tikzset{vertex/.style = {shape=circle,draw,minimum size=0.7em}}
\tikzset{edge/.style = {->,> = latex'}}
\node[vertex] (a) at  (0,0) {$v_1$};
\node[vertex] (b) at  (4,3) {$v_2$};
\node[vertex] (c) at  (8,0) {$v_3$};
\node[vertex] (d) at  (4,-3) {$v_4$};
\draw[loop] (a) to node[above] {$e_{11}$} (a);
\draw[loop] (c) to node[above] {$e_{33}$} (c);
\draw[edge] (b) to[bend left] node[right] {$e_{24}$} (d);
\draw[edge] (d) to[bend left] node[left] {$e_{42}$} (b);
\end{tikzpicture}
\vspace*{0.2cm}

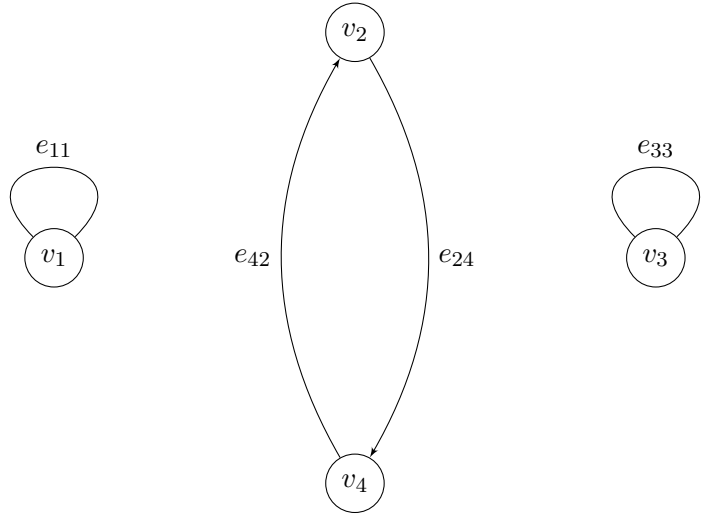
\captionof{figure}{\textbf{Directed 2-connected graph related to $\pi_2$}}

\vspace*{0.7cm}

For graph $\mathcal{G}(\pi_2)$ associated with $\pi_2$, as before consider its set of vertices and edges as $\mathcal{G}^0=\{x_{11}:=v_1, x_{12}:=v_2, x_{22}:=v_3, x_{21}:=v_4\}$ and $\mathcal{G}^1=\{x_{11}\overrightarrow{\sim}x_{11}:=e_{11},  x_{12}\overrightarrow{\sim}x_{21}:=e_{24}, x_{21}\overrightarrow{\sim}x_{12}:=e_{42}, x_{22}\overrightarrow{\sim}x_{22}:=e_{33}\}$, we have the following Proposition.
\begin{prop}\label{Prop:CKQ:2}
For $\mathcal{G}_2:=\mathcal{G}(\pi_2)=(\mathcal{G}^0,\mathcal{G}^1)$ as above, consider $\mathcal{H}$ be the underlying Hilbert space, that can be finite or infinite. Then the set 
\begin{align}
    S=\{& S_{e_{11}}:=E_{2,1}, S_{e_{24}}:=E_{4,1},\notag \\&S_{e_{42}}:=E_{1,4}, S_{e_{33}}:=E_{3,1}\}\label{Equ:PIs}
\end{align}
is a Cuntz-Krieger $\mathcal{G}_2$-family and gives us a graph $C^*$-algebra structure $\mathcal{C}^*(\pi_2):=M_4(\mathbb C)$. 
\end{prop}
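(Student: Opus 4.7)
The plan is to mirror the strategy of Proposition \ref{Prop:CKQ:1}: first determine the appropriate dimension of the carrier Hilbert space, then exhibit explicit projections and partial isometries in $B(\mathcal{H})$ and verify that they satisfy the Cuntz-Krieger $\mathcal{G}_2$-relations, and finally identify the generated $C^*$-algebra with $M_4(\mathbb{C})$.

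First I would carry out the dimensional bookkeeping for the vertex projections $P_{v_1}, P_{v_2}, P_{v_3}, P_{v_4}$. Each vertex of $\mathcal{G}(\pi_2)$ has exactly one outgoing edge and exactly one incoming edge: $v_1$ has the loop $e_{11}$, $v_3$ has the loop $e_{33}$, while $v_2$ and $v_4$ are joined by the $2$-cycle $\{e_{24}, e_{42}\}$. Consequently the second Cuntz-Krieger relation $P_v = \sum_{s(e) = v} S_e S_e^*$ reduces to a single summand for every non-sink $v$, and the dimensional chain that forced $\dim(P_v \mathcal{H}) = \infty$ in Proposition \ref{Prop:CKQ:1} does not arise here. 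I would therefore take $\mathcal{H} = \mathbb{C}^4$ and choose each $P_{v_i}$ to be a rank-one projection, so that the entire construction is finite-dimensional.

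Second, I would verify vertex-by-vertex that for each $e \in \mathcal{G}^1$ the operator $S_e$ listed in the statement is a partial isometry with $S_e^* S_e = P_{r(e)}$ and that $S_e S_e^*$ fits into the sum $P_{s(e)} = \sum_{s(f) = s(e)} S_f S_f^*$. Since the four candidates are matrix units, every such identity reduces to the product rule $E_{i,j} E_{k,\ell} = \delta_{jk} E_{i,\ell}$, and the whole step is a short finite check. The delicate piece is to pin down the projections $P_{v_i}$ so that both Cuntz-Krieger relations hold simultaneously for the loop edges $e_{11}$ and $e_{33}$; for a loop at $v$ one has the constraint $S_{e}^* S_e = S_e S_e^* = P_v$, forcing $S_e$ to be a normal partial isometry on $P_v \mathcal{H}$ and thereby constraining which diagonal matrix units may be assigned to the vertex projections.

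Third, I would show that the $C^*$-subalgebra of $B(\mathbb{C}^4)$ generated by $\{S_{e_{11}}, S_{e_{24}}, S_{e_{42}}, S_{e_{33}}\}$ together with their adjoints exhausts $M_4(\mathbb{C})$. The diagonal matrix units arise directly as $S_e^* S_e$ and $S_e S_e^*$; the off-diagonal units $E_{i,j}$ can be recovered from products $S_e S_f^*$ and $S_e^* S_f$ for appropriately chosen edge pairs $(e,f)$, and a quick combinatorial sweep confirms that all sixteen matrix units $\{E_{i,j}\}_{i,j=1}^4$ are produced. Since $\{E_{i,j}\}$ is a basis of $M_4(\mathbb{C})$, the generated $C^*$-algebra equals $M_4(\mathbb{C})$.

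The principal obstacle is the reconciliation in Step two: making sure the initial and range projections of the loop generators coincide with a single consistent choice of $P_{v_1}$ and $P_{v_3}$ in $B(\mathbb{C}^4)$, and that this choice remains orthogonal to $P_{v_2}, P_{v_4}$ so that $\{P_{v_i}\}$ forms a partition of the unity. Once these projections have been fixed compatibly, the Cuntz-Krieger relations and the identification $\mathcal{C}^*(\pi_2) \cong M_4(\mathbb{C})$ follow by direct matrix-unit computation.
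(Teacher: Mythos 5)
Your overall strategy (dimension count, explicit verification of the Cuntz--Krieger relations through matrix-unit products, then generation of all of $M_4(\mathbb{C})$) is the same as the paper's. However, the step you yourself flag as ``the principal obstacle'' --- reconciling the initial and range projections of the loop generators --- is not merely delicate: it fails outright for the operators listed in the statement, and your proposal defers rather than performs this check. For the loop $e_{11}$ at $v_1$, the two Cuntz--Krieger relations force $S_{e_{11}}^{*}S_{e_{11}} = P_{r(e_{11})} = P_{v_1}$ and, since $e_{11}$ is the only edge with source $v_1$, also $S_{e_{11}}S_{e_{11}}^{*} = P_{v_1}$. But with $S_{e_{11}} = E_{2,1}$ one gets $S_{e_{11}}^{*}S_{e_{11}} = E_{1,1}$ while $S_{e_{11}}S_{e_{11}}^{*} = E_{2,2}$, so no single projection $P_{v_1}$ can satisfy both relations; the same happens for $S_{e_{33}} = E_{3,1}$. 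Moreover $S_{e_{24}} = E_{4,1}$ and $S_{e_{11}} = E_{2,1}$ would give $P_{v_4} = S_{e_{24}}^{*}S_{e_{24}} = E_{1,1} = S_{e_{11}}^{*}S_{e_{11}} = P_{v_1}$, destroying the mutual orthogonality of the vertex projections that your own Step one requires. So Step two of your plan cannot be completed for the generators as given. (The paper's proof encounters the same collision --- it computes $P_{v_1} = P_{v_2} = P_{v_3} = E_{1,1}$ --- and proceeds anyway, so it does not resolve the difficulty either.)

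There is a second gap in your Step three. If one repairs the loop generators in the only way compatible with the relations (taking $S_{e_{11}}$ and $S_{e_{33}}$ to be normal partial isometries with $S^{*}S = SS^{*} = P_{v_1}$, resp.\ $P_{v_3}$, e.g.\ $E_{1,1}$ and $E_{3,3}$, and $S_{e_{24}}, S_{e_{42}}$ intertwining $P_{v_2}$ and $P_{v_4}$), then the graph $\mathcal{G}(\pi_2)$ decomposes into three components (two loops and one $2$-cycle), the four rank-one vertex projections are mutually orthogonal, and the linear span of $\{P_{v_1}, P_{v_3}, P_{v_2}, P_{v_4}, S_{e_{24}}, S_{e_{42}}\}$ is already closed under multiplication and adjoints. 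The generated $C^{*}$-algebra is then isomorphic to $\mathbb{C}\oplus\mathbb{C}\oplus M_2(\mathbb{C})$, not $M_4(\mathbb{C})$: the off-diagonal matrix units linking distinct components, such as $E_{1,3}$ or $E_{1,2}$, are never produced by any product of generators. So your ``quick combinatorial sweep'' in Step three would not confirm that all sixteen matrix units appear; it would show the opposite. To salvage the identification with $M_4(\mathbb{C})$ one would have to abandon either the orthogonality of the vertex projections or the loop relation $S_e^{*}S_e = S_eS_e^{*}$, i.e.\ one would no longer have a Cuntz--Krieger $\mathcal{G}_2$-family in the sense defined earlier in the paper.
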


\begin{proof}
    There can be finite or infinite dimension sets of projections. Hence, first, we need to clarify that on which of these spaces we may live in!

    For $\{S,P\}\in B(\mathcal{H})$, let us start by looking at the constructions on the dimensions of the subspaces $P_{v_1}\mathcal{H}, P_{v_2}\mathcal{H}, P_{v_3}\mathcal{H}$ and $P_{v_4}\mathcal{H},$ in order to see that if we need to consider finite-dimensional set of projections or not. We have
\begin{align}
&\dim(P_{v_4}\mathcal{H})=\dim(S_{e_{24}}\mathcal{H})=\dim(P_{v_2}\mathcal{H}),\label{Equ:MHG:1}\\&\dim(P_{v_2}\mathcal{H})=\dim(S_{e_{42}}\mathcal{H})=\dim(P_{v_4}\mathcal{H}),\label{Equ:MHG:2}\\&\dim(P_{v_1}\mathcal{H})=\dim(S_{e_{11}}\mathcal{H}),\label{Equ:MHG:3}\\&\dim(P_{v_3}\mathcal{H})=\dim(S_{e_{33}}\mathcal{H}).\label{Equ:MHG:4}
     \end{align}

Now if one of $P_{v_i}\mathcal{H}$, for $i\in\{1,\cdots,4\}$ is infinite-dimensional, then they all are forced to be infinite-dimensional. But, let us try to construct a finite appropriate \textit{CK}-$\mathcal{G}$ family such that $\dim(P_{v_i}\mathcal{H})=1$, and since we require $P_{v_i}$s for $i\in\{1,\cdots,4\}$, to be mutually orthogonal, hence the smallest dimension of $\mathcal{H}$ that we can consider is $1+1+1+1=4$. In this regard, we attempt to construct such a system in $M_4(\mathbb C)$.

Let us take $\{k_1\}$ to be a basis of $P_{v_1}\mathcal{H}$, $\{k_2\}$ to be a basis of $P_{v_2}\mathcal{H}$, $\{k_3\}$ to be a basis of $P_{v_3}\mathcal{H}$, and $\{k_4\}$ to be a basis of $P_{v_4}\mathcal{H}$. Now let $S_{e_{11}}$ sends $k_1$ to $k_1$, $S_{e_{33}}$ sends $k_3$ to $k_3$, $S_{e_{24}}$ sends $k_2$ to $k_4$, and $S_{e_{42}}$ sends $k_4$ to $k_2$, such that we have $S_{e_{11}}=E_{21}$, $S_{e_{33}}=E_{31}$, $S_{e_{24}}=E_{41}$. And the \textit{CK}-conditions tell us that $P_{v_1}=S_{e_{11}}^{*}S_{e_{11}}=E_{12}E_{21}=E_{11}$, $P_{v_3}=S_{e_{33}}^{*}S_{e_{33}}=E_{13}E_{31}=E_{11}$, $P_{v_2}=S_{e_{24}}^{*}S_{e_{24}}=S_{e_{42}}S_{e_{42}}^{*}=E_{14}E_{41}=E_{11}$, and $P_{v_4}=S_{e_{42}}^{*}S_{e_{42}}=S_{e_{24}}S_{e_{24}}^{*}=E_{41}E_{14}=E_{44}$, and so $\{S,P\}$ is indeed a \textit{CK} $\mathcal{G}$-family, and $C^*(S,P)$ is a finite $C^*$-algebra. Since we already have gotten that $E_{11}, E_{12}, E_{13}, E_{14}, E_{21}, E_{22}, E_{31}, E_{33}, E_{41}$, and $E_{44}$ are in $C^*(S,P)$, and the claim is that $C^*(S,P)=M_4(\mathbb C)$, hence in order to prove this claim, what we need is just to show that the rest of matrix units are also belonging to $C^*(S,P)$. We have $E_{23}=E_{21}E_{13}$, meaning that $E_{23}\in C^*(S,P)$, $E_{24}=E_{21}E_{14}\in C^*(S,P)$, $E_{32}=E_{31}E_{12}\in C^*(S,P)$, $E_{34}=E_{31}E_{14}\in C^*(S,P)$, $E_{42}=E_{41}E_{12}\in C^*(S,P)$, and $E_{43}=E_{41}E_{13}\in C^*(S,P)$. Hence $C^*(S,P)$ must be all of $M_4(\mathbb C)$.
\end{proof}

\begin{lemma}
    Let $\mathcal{H}=\ell^2(\mathbb N)$, and $B(\mathcal{H})$ be the respected $C^*$-algebra, and to be our ground $C^*$-algebra. Then it is also possible to construct an infinite-dimensional graph $C^*$-algebra $C^*(\pi_2)$ by considering the following set of \textit{CK} $\mathcal{G}$-family
    \begin{align}
    S=\{&S_{e_{11}}=S_{e_{22}}:=\sum_{n=1}^{\infty}E_{n+1,n}, S_{e_{24}}:=\sum_{n=1}^{\infty}E_{2n,n}, S_{e_{42}}:=\sum_{n=1}^{\infty}E_{n,2n}\}.\label{Equ:PIs}
\end{align}
\end{lemma}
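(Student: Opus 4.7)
The plan is to carry out the same three-part verification used in Propositions~\ref{Prop:CKQ:1} and~\ref{Prop:CKQ:2}: (i) show that each element of $S$ is a well-defined bounded partial isometry on $\ell^{2}(\mathbb N)$; (ii) read off the candidate vertex projections $P_{v_i}$ from the compositions $S_{e}^{*}S_{e}$ and $S_{e}S_{e}^{*}$ and check the Cuntz--Krieger relations \ref{Equ:S:1} and \ref{Equ:S:2} at every non-sink vertex of $\mathcal{G}(\pi_{2})$; and (iii) conclude that the $C^{*}$-subalgebra of $B(\ell^{2}(\mathbb N))$ so generated is genuinely infinite-dimensional, giving a second realisation of $C^{*}(\pi_{2})$ essentially different from the finite-dimensional $M_{4}(\mathbb C)$ obtained in Proposition~\ref{Prop:CKQ:2}.

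Step (i) is routine. Each series $\sum_{n=1}^{\infty}E_{a_{n},b_{n}}$ appearing in $S$ has strictly injective row sequence $(a_{n})$ and column sequence $(b_{n})$, so its partial sums are partial isometries whose initial and final subspaces form mutually orthogonal families of rays. This forces strong-operator convergence to a partial isometry of norm one, with source projection $\sum_{n}E_{b_{n},b_{n}}$ and range projection $\sum_{n}E_{a_{n},a_{n}}$.

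Step (ii) is the main bookkeeping obstacle. A direct application of $E_{ij}E_{k\ell}=\delta_{jk}E_{i\ell}$ quickly yields equalities such as $S_{e_{11}}^{*}S_{e_{11}}=I$ and $S_{e_{24}}S_{e_{24}}^{*}=\sum_{n}E_{2n,2n}$. These formulas have to be matched to the edge/vertex incidence of $\mathcal{G}(\pi_{2})$ so that the four projections $P_{v_{1}},P_{v_{2}},P_{v_{3}},P_{v_{4}}$ come out mutually orthogonal and sum to the identity, which is what makes $(S,P)$ a nondegenerate CK $\mathcal{G}$-family. The difficulty is that several of the naive source/range projections read off from step (i) equal the identity, so mutual orthogonality cannot hold on the nose; one must reinterpret the operators in $S$ as acting on four pairwise disjoint infinite-dimensional coordinate subspaces of $\ell^{2}(\mathbb N)$. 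Concretely, one partitions $\mathbb N$ into four infinite subsets $N_{1},N_{2},N_{3},N_{4}$ dictated by the orbit structure of the shift $\sum E_{n+1,n}$ and the doubling maps $\sum E_{2n,n}, \sum E_{n,2n}$, sets $P_{v_{i}}$ equal to the coordinate projection onto $\spann\{e_{n}:n\in N_{i}\}$, and then realises each $S_{e}$ as the compression $P_{v_{r(e)}}S_{e}P_{v_{s(e)}}$. The apparent identification $S_{e_{11}}=S_{e_{22}}$ in the statement (plainly a typographical slip for $S_{e_{11}}=S_{e_{33}}$, since $e_{22}$ is not an edge of $\mathcal{G}(\pi_{2})$) is then read as \emph{same formula, disjoint supports}, which is precisely what the above decomposition permits.

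Step (iii) is then automatic: the iterates $S_{e_{11}}^{k}$ for $k\ge 1$ are pairwise linearly independent operators of norm one inside the generated algebra (each is a shift-by-$k$ within the invariant subspace $P_{v_{1}}\mathcal{H}$), so $C^{*}(S,P)$ contains an infinite linearly independent family and hence cannot embed into any $M_{N}(\mathbb C)$. This gives the claimed infinite-dimensional graph $C^{*}$-algebra structure on $\pi_{2}$ and completes the proof.
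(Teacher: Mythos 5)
There is a genuine gap, and it is located exactly where you put the most weight: the ``reinterpretation by compression'' in your Step (ii) cannot be made to work for the loop edges. In the graph $\mathcal{G}(\pi_2)$ the edge $e_{11}$ is the \emph{only} edge out of $v_1$ and the only edge into $v_1$, so the Cuntz--Krieger relations \ref{Equ:S:1} and \ref{Equ:S:2} force $S_{e_{11}}^{*}S_{e_{11}}=P_{v_1}=S_{e_{11}}S_{e_{11}}^{*}$, i.e.\ $S_{e_{11}}$ must act as a \emph{unitary} on $P_{v_1}\mathcal{H}$. The operator $\sum_{n}E_{n+1,n}$ is the unilateral shift, a proper isometry with $S^{*}S=I$ but $SS^{*}=I-E_{1,1}$, so relation \ref{Equ:S:2} already fails at $v_1$ before any orthogonality issue arises. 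Your proposed repair --- partitioning $\mathbb N$ into $N_1,\dots,N_4$ and replacing each $S_e$ by $P_{v_{r(e)}}S_eP_{v_{s(e)}}$ --- cannot rescue this: for the compression of $\sum_n E_{n+1,n}$ to a coordinate subspace $\ell^2(N_1)$ to be isometric, $N_1$ must be closed under the successor map, hence a tail $\{m,m+1,\dots\}$, and the compression is then again a unilateral shift on that tail, still non-unitary. So no choice of $N_1$ yields a CK family; the given operators form at best a Toeplitz--Cuntz--Krieger family. The same defect recurs at $v_2,v_4$: one computes $P_{v_4}=S_{e_{24}}^{*}S_{e_{24}}=I$ while $P_{v_2}=S_{e_{24}}S_{e_{24}}^{*}=\sum_n E_{2n,2n}$, which are not mutually orthogonal, and compressing $\sum_n E_{2n,n}$ to disjoint coordinate subspaces destroys the identity $S_{e_{24}}^{*}S_{e_{24}}=P_{v_4}$ rather than restoring it. An infinite-dimensional CK $\mathcal{G}(\pi_2)$-family does exist (take four mutually orthogonal infinite-dimensional subspaces, let $S_{e_{11}}$ and $S_{e_{33}}$ be bilateral shifts on the first and third, and let $S_{e_{24}},S_{e_{42}}$ be mutually adjoint unitaries between the second and fourth), but it is not the family displayed in the statement, so the lemma as written needs its generators replaced, not merely reinterpreted.

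For what it is worth, the paper itself supplies no argument here --- its proof is a one-line deferral to the methods of Propositions \ref{Prop:CKQ:1} and \ref{Prop:CKQ:2} --- so you cannot be faulted for departing from it; your Step (i) and the linear-independence argument in Step (iii) are fine as far as they go, and your observation that $S_{e_{22}}$ should read $S_{e_{33}}$ is correct. But the central verification that $\{S,P\}$ satisfies \ref{Equ:S:1} and \ref{Equ:S:2} is not achievable for the operators as given, so the proof does not close.
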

\begin{proof}
    The proof is using the same approach as in the Propositions \ref{Prop:CKQ:1} and \ref{Prop:CKQ:2}.
\end{proof}

Now, as in Proposition \ref{Prop:CKQ:2}, for graph $\mathcal{G}(\pi_n)$ associated with $\pi_n$, consider its set of vertices and edges as $\mathcal{G}^0=\begin{cases}
    x_{ii} \qquad \text{for} \ i\in\{1,\cdots,n\},\\
    x_{ij} \qquad \text{for} \ i,j\in\{1,\cdots,n\} \ \text{and} \ i\neq j.
\end{cases} $ and $\mathcal{G}^1=\begin{cases}
    x_{i,i}\overrightarrow{\sim}x_{i,i}:=e_{i,i},\\
    x_{i,j}\overrightarrow{\sim}x_{j,i}:=e_{2i,2j},
\end{cases}$, we have the following Theorem.

\begin{thm}
We have the following finite and infinite dimensional set of \textit{CK} $\mathcal{G}$-family and the associated $C^*(S,P)$ graph $C^*$-algebras.
\begin{enumerate}
    \item For $\pi_n$ the associated commuting matrix to $\Pi_n$ the adjacency matrix of the graph associated with the defining relations of the coordinate ring of $M_q(n)$, the set
    
    \vspace*{0.3cm}
   \hspace*{3cm} $S=\begin{cases}
        S_{i,i}:=S_{e_{i,i}}:=E_{i+1,1} \qquad \qquad \text{for} \ 1\leq i\leq n\\
        S_{2i,2j}:=S_{e_{2i,2j}}:=E_{2j,1} \qquad \qquad \text{for} \ i\leq j\\
        S_{2i,2j}:=S_{e_{2i,2j}}:=E_{1,2j} \qquad \qquad \text{for} \ j\leq i
    \end{cases}$
    \vspace*{0.3cm}
    
is a Cuntz-Krieger $\mathcal{G}$-family and gives us a graph $C^*$-algebra structure $\mathcal{C}^*(\pi_n)=M_{n^2}(\mathbb C)$.
\item Now let $\mathcal{H}=\ell^2(\mathbb N)$, and $B(\mathcal{H})$ be the respected $C^*$-algebra, and to be our ground $C^*$-algebra. Consider the following set 

\vspace*{0.3cm}
   \hspace*{3cm} $S=\begin{cases}
        S_{i,i}:=S_{e_{ii\overrightarrow{\sim} ii}}:=\sum_{n=1}^{\infty}E_{n+1,n} \qquad \qquad \text{for} \ 1\leq i\leq n\\
        S_{j,2j}:=S_{e_{ij\overrightarrow{\sim} ji}}:=\sum_{n=1}^{\infty}E_{2n,n} \qquad \qquad \text{for} \ i\leq j\\
        S_{2j,j}:=S_{e_{ij\overrightarrow{\sim} ji}}:=\sum_{n=1}^{\infty}E_{n,2n} \qquad \qquad \text{for} \ j\leq i.
    \end{cases}$
    \vspace*{0.3cm}

Then the set $\{S,P\}\in B(\mathcal{H})$ is a \textit{CK} $\mathcal{G}$-family and gives us an infinite dimensional graph $C^*$-algebra structure $C^*(S,P):=C^*(\pi_n)$.
\end{enumerate}

\end{thm}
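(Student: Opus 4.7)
The plan is to follow the blueprint laid out in Propositions \ref{Prop:CKQ:1} and \ref{Prop:CKQ:2} and treat the two parts uniformly, starting by extracting the dimension constraints that the geometry of $\mathcal{G}(\pi_n)$ places on the projections. Each diagonal vertex $x_{ii}$ is both the source and the range of its unique self-loop $e_{ii}$, so the CK identity $S_{e_{ii}}^{*}S_{e_{ii}}=P_{x_{ii}}=S_{e_{ii}}S_{e_{ii}}^{*}$ holds trivially at the level of dimensions. For each off-diagonal vertex $x_{ij}$ with $i\neq j$ there is a unique outgoing edge (to $x_{ji}$) and a unique incoming edge (from $x_{ji}$), so the range/source conditions force $\dim(P_{x_{ij}}\mathcal{H}) = \dim(P_{x_{ji}}\mathcal{H})$. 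Consequently all $n^{2}$ vertex projections must have ranges of a common dimension $d$, and since they are pairwise orthogonal, the ambient Hilbert space has dimension at least $n^{2}d$. The two items of the theorem then correspond to the extremal choices $d=1$ and $d=\infty$.

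For the first item I would fix $d=1$, set $\mathcal{H}=\mathbb{C}^{n^{2}}$, and fix a basis $\{k_{v}\}_{v\in\mathcal{G}^{0}}$ indexed by the vertices. Each proposed generator is then interpreted as the rank-one operator carrying the basis vector at the source of the corresponding edge to the basis vector at its range, so the CK identities $S_{e}^{*}S_{e}=P_{r(e)}$ and $P_{v}=\sum_{s(e)=v}S_{e}S_{e}^{*}$ reduce to the matrix-unit rule $E_{ab}E_{cd}=\delta_{bc}E_{ad}$, to be checked in three routine cases (self-loops, the edges $x_{ij}\to x_{ji}$ with $i\leq j$, and the edges $x_{ji}\to x_{ij}$ with $j\leq i$). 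To conclude $C^{*}(S,P)=M_{n^{2}}(\mathbb{C})$, I would observe that the generators together with their adjoints already produce a spanning family of matrix units lying on a single row and column, and recover an arbitrary matrix unit $E_{ab}$ as a three-fold product of these, exactly mimicking the closing step of the proof of Proposition \ref{Prop:CKQ:2}.

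For the second item I would work in $\mathcal{H}=\ell^{2}(\mathbb{N})$ with each $P_{v}\mathcal{H}$ infinite-dimensional. The proposed generators $\sum_{n}E_{n+1,n}$, $\sum_{n}E_{2n,n}$, and $\sum_{n}E_{n,2n}$ are each supported on mutually disjoint pairs of coordinate axes, so each sum defines a bounded operator of norm one, and the CK relations become a termwise telescoping computation of the same type as in Proposition \ref{Prop:CKQ:1}; the pairwise orthogonality of the $P_{v}$ is inherited from the disjointness of the supports. The bulk of the work is a calculation of shift-type products like $(\sum_{n}E_{n,2n})^{*}(\sum_{n}E_{n,2n})=\sum_{n}E_{2n,2n}$, from which the CK conditions follow.

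The main obstacle I anticipate is purely combinatorial: in the first item the various $S_{e}$ must collectively realise mutually orthogonal one-dimensional ranges, and in the second item the indices $n$ and $2n$ must be assigned to the vertices $x_{ij}$ and $x_{ji}$ consistently across all pairs $i\neq j$. The cleanest way I see to avoid indexing collisions is to fix once and for all an explicit bijection between the $n^{2}$ vertices of $\mathcal{G}(\pi_n)$ and either $\{1,\dots,n^{2}\}$ or the coordinate basis of $\ell^{2}(\mathbb{N})$, rewrite the stated formulas in that enumeration, and then read off the CK relations from the resulting lookup table rather than from the indexed expressions appearing in the statement.
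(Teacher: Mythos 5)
Your proposal follows essentially the same route as the paper's own proof: a preliminary dimension count on the vertex projections, the finite case realised by rank-one matrix units in $M_{n^2}(\mathbb{C})$ with the generation of all matrix units as products, and the infinite case verified by termwise computation of the shift-type sums in $B(\ell^2(\mathbb{N}))$. Your closing remark about fixing an explicit bijection between the $n^2$ vertices and the coordinates is in fact a genuine improvement rather than a mere precaution, since the formulas as written in the statement (and in the paper's proof) send several distinct range projections to the same matrix unit $E_{1,1}$, which is incompatible with the required mutual orthogonality of the $P_v$; carrying out your relabelling step is what would make the argument fully rigorous.
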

\begin{proof}
As we work in a nondegenerate space, since the set of vertices of our graphs consists of no sinks, hence there can be finite and infinite dimension sets of projections. Here we will prove both.

Consider $\mathcal{H}$ to be the underlying Hilbert space, and for $\{S,P\}\in B(\mathcal{H})$, let us start by looking at the constructions on the dimensions of the subspaces $P_{v_i}\mathcal{H}$ for $i\in\{1,\cdots,n^2\}$ in order to find out the dimension of the space on which we live in! We have
        \begin{align}
&\dim(P_{x_{ij}}\mathcal{H})=\dim(S_{e_{2i,2j}}\mathcal{H}),\\&\dim(P_{x_{ii}}\mathcal{H})=\dim(S_{e_{i,i}}\mathcal{H}).\label{Equ:MHG:4}
     \end{align}
     Now, as before, if any of $P_{x_{ij}}\mathcal{H}$, for $i,j\in\{1,\cdots,n\}$, are infinite-dimensional, then they all have to be infinite-dimensional.
    \begin{enumerate}
        \item For this case, let us try to construct a finite appropriate \textit{CK} $\mathcal{G}$-family such that $\dim(P_{x_{ij}})=1$, and since we require $P_{x_{ij}}$'s to be mutually orthogonal, hence the smallest dimension of $\mathcal{H}$ that we can consider is $1+\cdots+1=n^2$. So, concerning this, we will try to construct such a system in $M_{n^2}(\mathbb C)$.

        Now take $k_{i,j}$ to be a basis of $P_{x_{ij}}\mathcal{H}$, and let $S_{i,i}$ sends $k_{ii}$ to $k_{ii}$, and $S_{2i,2j}$ sends $k_{ij}$ to $k_{ij}$, for any $i,j\in\{1,\cdots,n\}$ in order to have $S_{i,i}=E_{i+1,1}$, and $S_{2i,2j}=E_{2j,1}$  for $i\leq j$, and $S_{2i,2j}=E_{1,2j}$ for $i\geq j$. Then \text{CK}-conditions tell us that $P_{x_{ii}}=S_{i,i}^{*}S_{i,i}=E_{1,i+1}E_{i+1,1}=E_{1,1}$, and for $i\leq j$ we get $P_{x_{ij}}=S_{2i,2j}^{*}S_{2i,2j}=E_{1,2j}E_{2j,1}=E_{1,1}=S_{2j,2i}S_{2j,2i}^{*}$, and for $i\geq j$ we have $P_{x_{ij}}=S_{2i,2j}^{*}S_{2i,2j}=E_{2j,1}E_{1,2j}=E_{2j,2j}=S_{2j,2i}S_{2j,2i}^{*}$. Hence, $\{S,P\}$ is indeed a \text{CK} $\mathcal{G}$-family, and $C^*(S,P)$ is a finite $C^*$-algebra. Since we already know that $E_{11}, E_{i+1,1}, E_{1,i+1}, E_{2j,1}, E_{1,2j}, E_{2i,2j},$ and $ E_{2j,2i}$ are in $C^*(S,P)$, and the claim is that $C^*(S,P)=M_{n^2}(\mathbb C)$, and in order to prove this claim, we need to show that the rest of matrix units are also belonging to $C^*(S,P)$.

        As we have $E_{i+1,i+1}=E_{i+1,1}E_{1,i+1}\in C^*(S,P)$, $E_{2j,i+1}=E_{2j,1}E_{1,i+1}\in C^*(S,P)$, and $E_{i+1,2j}=E_{i+1,1}E_{1,2j}\in C^*(S,P)$, hence it is almost clear that $C^*(S,P)$ must be all of $M_{n^2}(\mathbb C)$.
         
        \item Let $\mathcal{H}=\ell^2(\mathbb N)$, and $B(\mathcal{H})$ be the respected $C^*$-algebra, and to be our ground $C^*$-algebra. For $\{S,P\}\in B(\mathcal{H})$, in order to prove the claim of the theorem, we are required to prove the following assertions:

        $$\begin{cases}
            S_{i,i}^{*}S_{i,i}=P_{x_{ii}},\\
            S_{2i,2j}^{*}S_{2i,2j}=P_{x_{ij}} \qquad \text{for} \qquad i\leq j,\\
            S_{2i,2j}^{*}S_{2i,2j}=P_{x_{ij}} \qquad \text{for} \qquad i\geq j,
        \end{cases}$$
        then, by following the assumptions of the second part of the theorem, it is not too difficult to see that
        \begin{align*}
            &S_{i,i}^{*}S_{i,i}=\sum_{n=1}^{\infty}E_{n,n} \qquad \qquad \qquad \ \ \ \ \ \ \ \qquad \text{for} \qquad 1\leq i\leq n\\& S_{2i,2j}^{*}S_{2i,2j}=\sum_{n=1}^{\infty}E_{n,n}=S_{2i,2j}S_{2i,2j}^{*} \qquad \ \ \text{for} \qquad  i\leq j\\&S_{2i,2j}^{*}S_{2i,2j}=\sum_{n=1}^{\infty}E_{2n,2n}=S_{2i,2j}S_{2i,2j}^{*} \qquad \text{for} \qquad  i\geq j,
        \end{align*}
    \end{enumerate}
    meaning that $S_{i,i}$ and $S_{2i,2j}$ defined by the relations of the second part of the theorem, for any $i,j\in\{1,\cdots,n\}$, will constitute the desired \textit{CK} $\mathcal{G}$-family, and $C^*(S,P)$ is an infinite-dimensional $C^*$-algebra. 
\end{proof}

\section{Multiplier Hopf $*$-Graph Algebras}
There are many different kinds of graph algebras, but the one in which we are interested should be equipped with a nondegenerate product $\cdot$ meaning that if for any $a$ we have $a\cdot b=0$, then we should get $b=0$ (here $a$ and $b$ are taken from our algebra).

The graph algebra pointed out in the previous section and illustrated in Figure \ref{fig1}, is a nondegenerate $*$-monoid algebra with an identity element considered by the graph associated with the matrix $\pi_{2}=\begin{bmatrix}
1 & 0 & 0 & 0\\ 0 & 0 & 1 & 0\\ 0 & 1 & 0 & 0\\ 0 & 0 & 0 & 1
\end{bmatrix}$, which is the only commuting matrix with the adjacency matrix of the graph associated with $\mathbb K[M_q(2)]$, extendable to $\mathbb K[M_q(n)]$ for any $n\ge 2$, by the rule of assigning 1 to the entries located in the $(ij)(ji)$ position and 0 elsewhere, in the row related to $x_{ij}$, and then we have the following Proposition from \cite[Proposition 9]{RH24}
\begin{prop}\label{Prop:mHA:graph}
	For $\Pi$, a locally finite connected graph associated with coordinate algebra $\mathbb K\left(M_q(n)\right)$ with vertex set $\{x_{11}, x_{12},\cdots, x_{ij}\}$ for $i,j\in \{1,2,\cdots,n\}$ and the index set $I:=\{11, 12, \cdots, ij\}$, there exists a unique universal nondegenerate $*$-algebra $\mathcal{A}$ generated by elements $(u_{hh'})_{h,h'\in I}$, satisfying the relations of quantum permutation groups, and a unique nondegenerate $*$-homomorphism $\Delta:\mathcal{A}\rightarrow M(\mathcal{A}\otimes\mathcal{A})$ satisfying $\Delta(u_{hh'})=\sum_{k\in I}^{}(u_{hk}\otimes u_{kh'})$ for all $h, h'\in I$, such that the pair $(\mathcal{A},\Delta)$ is a multiplier Hopf $*$-algebra in the sense of (\cite{VD94}, Definition 2.4), and since it admits a positive faithful left-invariant (resp. right-invariant) functional, it is an algebraic quantum group in the sense of \cite{AD96}.
\end{prop}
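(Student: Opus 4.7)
The plan is to recognize that the first half of the statement is a direct specialization of Theorem \ref{Th:RV} to the particular graph $\mathcal{G}_n = \mathcal{G}(\Pi_n)$, after which the ``algebraic quantum group'' upgrade is obtained by exhibiting a positive faithful left-invariant functional. First I would verify that $\mathcal{G}_n$ is locally finite and connected on the index set $I = \{(i,j) : 1 \leq i,j \leq n\}$: local finiteness follows because each generator $x_{ij}$ of $\mathbb K[M_q(n)]$ participates in only finitely many of the defining quadratic $q$-commutation relations, giving every vertex finite in- and out-degree; connectedness follows since the standard row/column $q$-commutations already link every $x_{ij}$ to every $x_{kl}$ sharing a row or column, and the remaining diagonal-type relations bridge the other pairs.

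Once local finiteness is in hand, Theorem \ref{Th:RV} applied to $\Pi := \Pi_n$ produces the universal nondegenerate $*$-algebra $\mathcal{A}$ generated by the magic unitary entries $u_{hh'}$ subject to the relations of Definition \ref{Def:MUM}, together with the unique nondegenerate $*$-homomorphism $\Delta : \mathcal{A} \to M(\mathcal{A} \otimes \mathcal{A})$ with $\Delta(u_{hh'}) = \sum_{k \in I} u_{hk} \otimes u_{kh'}$, and asserts that $(\mathcal{A}, \Delta)$ is a multiplier Hopf $*$-algebra in the sense of \cite{VD94}. The counit $\varepsilon(u_{hh'}) = \delta_{hh'}$ and antipode $S(u_{hh'}) = u_{h'h}$ are read off from the magic-unitary identities on the generators and extended by $*$-homomorphism (resp. anti-homomorphism) to all of $\mathcal{A}$; compatibility with $\Delta$ is checked on generators.

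For the algebraic quantum group conclusion in the sense of \cite{AD96}, I would produce a positive faithful left-invariant functional $\varphi$ and then define the right-invariant one by $\psi := \varphi \circ S$. The strategy is to use that the quantum automorphism group of a locally finite graph is of discrete type, so $\mathcal{A}$ splits as an algebraic direct sum of finite-dimensional matrix blocks indexed by equivalence classes of irreducible corepresentations; on each block, the canonical trace weighted by quantum dimensions supplies the corresponding component of $\varphi$, and Van Daele's machinery in \cite{VD94} assembles these components into a single positive faithful left-invariant functional. Equivalently, one restricts the Haar state constructed on the $C^*$-completion of $\mathcal{A}$ in \cite{RV22} back to the dense $*$-subalgebra $\mathcal{A}$.

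The main obstacle I anticipate is this last step: existence of an invariant functional is not formal, and I would need either to cite the discrete-type structure proved in \cite{RV22} for quantum automorphism groups of locally finite graphs, or to verify by hand the invariance identity $(\id \otimes \varphi)(\Delta(a)(1 \otimes b)) = \varphi(a)\, b$ for all $a, b \in \mathcal{A}$. Once either route is completed, the remaining algebraic-quantum-group axioms of \cite{AD96} follow immediately from the multiplier Hopf $*$-algebra structure already established in the previous step.
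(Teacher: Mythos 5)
Your proposal is correct and follows essentially the same route the paper intends: the paper states this proposition without proof, importing it from \cite[Proposition 9]{RH24}, where it is obtained exactly as you describe by specializing Theorem \ref{Th:RV} to the graph $\mathcal{G}(\Pi_n)$ and then invoking an invariant functional. One small correction to your last step: since the graph here has only $n^2$ vertices, the resulting quantum automorphism group is of \emph{compact} (not discrete) type, so $\mathcal{A}$ is unital, $M(\mathcal{A}\otimes\mathcal{A})=\mathcal{A}\otimes\mathcal{A}$, and the positive faithful left- (and right-) invariant functional is simply the Haar state, which is automatically faithful on the underlying CQG $*$-algebra; the matrix-block (Peter--Weyl) decomposition you invoke applies to $\mathcal{A}$ as a coalgebra, equivalently to its dual as an algebra, not to $\mathcal{A}$ itself.
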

\begin{remark}
    \begin{enumerate}
        \item We note that the defining relations of the graph algebra in Proposition \ref{Prop:mHA:graph}, is not always nondegenerate and we might not be able to use it. But here since the entries $u_{hh'}$ belong to the set $\{0,1\}$ for any $h,h'\in I$, hence we certainly have a nondegenerate binary operator.
        \item Let us discuss a little about different kinds of graph algebras. There are three elementary ways of constructing graph algebras \cite{IR05}. One way is by using the algebra constructed based on the set of vertices of the graph together with the extra vertex 0 (not in the set of vertices). In this way, the algebra will not be nondegenerate and almost useless for our case. The other way is by looking at the entries of the adjacency matrices or the commuting matrices with the adjacency matrices, on which if they are magic unitaries, then by Theorem \ref{Th:RV}, we get a multiplier Hopf algebra structure, but not exactly a multiplier Hopf graph algebra structure!

        The third approach is to directly work on the graph structures and put an algebra structure on the set of specific graphs, as we have done on the set of $\mathcal{G}_i$s in  \cite{RH24}, illustrated in (\ref{GA:1}).
    \end{enumerate}
\end{remark}
Let us call the algebra structure on the vector space of the $(n^2-1)$-connected locally finite graphs $\mathcal{G}_i$, simply $\mathcal{G}$. This is a unital $*$-algebra, and in order to have a $*$-multiplier Hopf algebra, we need to define a map $\Delta$ on $\mathcal{G}$ to $M(\mathcal{G}\otimes\mathcal{G})$, resembling the co-product and satisfying the co-associativity condition
\begin{equation}\label{Equ:Coa}
    (\pi_i\otimes 1\otimes 1)(\Delta\otimes\id)(\Delta(\pi_j)(1\otimes \pi_k))=(\id\otimes\Delta)((\pi_i\otimes 1)\Delta(\pi_j))(1\otimes 1\otimes \pi_k),
\end{equation}
in a way that $\Delta(\pi_i)(1\otimes\pi_j)$ and $(\pi_i\otimes 1)\Delta(\pi_j)$ belong in $\mathcal{G}\otimes\mathcal{G}$, for any $\pi_i, \pi_j, \pi_k\in\mathcal{G}$.

\begin{que}\label{Qu:MGHA:1}
    How can we define such a map for a graph algebra consisting of graphs?
\end{que}
Before answering the above question, we need to come to some conclusions concerning the space of multiplier algebra over $\mathcal{G}$. Here our job is quite easy since our algebra is unital and from \cite[Appendix A]{VD94}, we know that for unital algebras we have $M(\mathcal{G})=\mathcal{G}$ and for the nondegenerate algebras, as in our case, we know that $\mathcal{G}\otimes\mathcal{G}$ will be nondegenerate as well.

\hspace*{0.2cm} Let us recall some facts concerning the multiplier algebra, on which we will need to define the multiplier Hopf graph algebras.

\hspace*{0.2cm} Multiplier algebras are a very important part of studying $C^*$-algebras and their structures. For a unital or non-unital non-degenerate algebra $A$, the associated multiplier algebra $M(A)$ will be defined as a set consisting of maps $\rho:A\to A$ satisfying in $c(\rho(a)b)=(c\rho(a))b$ for all $a,b,c\in A$, meaning that $\rho$ simultaneously left and a right multiplier algebra. $M(A)$ is a unital algebra with unit $1_{M(A)}=(\id_A,\id_A)$, and it is the largest unital algebra that contains $A$ as an essential ideal, and hence $A$ can be naturally embeded in $M(A)$. For more information regarding multiplier Hopf algebras, the interested reader is referred to \cite{RH24}.
\begin{remark}\label{Rem:OD}
    \begin{enumerate}
        \item For $A$ an operator algebra, $A^{**}$ is an operator algebra as well.\label{Rem:OD:1}
        \item For an operator algebra $A$, we have the following completely isometric identification for any $n\geq 1$,
        \begin{equation}
            M_n(A)^{**}\cong M_n(A^{**}).
        \end{equation}\label{Rem:OD:2}
 \end{enumerate}
\end{remark}
As the previous section suggests, the idea is to implement the graph $C^*$-algebra $C^*(S,P)$ associated with $\pi_i$s. In this case, we will obtain $n$ different multiplier $*$-Hopf graph algebras, and in order to define them, let us fix some notation.
let us start with the simplest one as follows:

We note that while working with $\pi_n$, actually our space is a $n^2\times n^2$ block matrix, consisting of $n^n$, $n\times n$ matrices, $n$ blocks in each row and column.  In what comes, our base will be the set $\{1,\cdots,n^2\}$ divided into $n$ partitions $P_1$ to $P_n$, such that each $P_i$, $i\in\{1,\cdots,n\}$, also is divided into $n$ partitions $P_{i}^{j}$ for $j\in\{1,\cdots,n\}$.
\begin{remark}
    Please note that $i$ and $j$ used above, are different than $i$ and $j$ used below.
\end{remark}

\begin{thm}\label{Thm:.:FMHGA}
\begin{enumerate}[label=\Roman*)]
    \item \label{Thm:.:FMHGA:1} For the graph $C^*$-algebra $C^*(S,P):=C^*(\pi_2)=M_4(\mathbb C)$ as in Proposition \ref{Prop:CKQ:2} and the Cuntz-Krieger $\mathcal{G}_2$-family $S=\{S_{e_{1,1}}:=E_{2,1}, S_{e_{2,4}}:=E_{4,1}, S_{e_{42}}:=E_{1,4}, S_{e_{3,3}}:=E_{3,1}\}$, define 
    \begin{align}
        \Delta:&\mathcal{O}(M_4(\mathbb C))[t^{-1}]\to M(\mathcal{O}(M_4(\mathbb C))[t^{-1}]\otimes \mathcal{O}(M_4(\mathbb C))[t^{-1}])\label{Thm:.:FMHGA:1:1}\\&\hspace*{1.46cm} E_{i,j}\hspace*{0.26cm}\longmapsto E_{k,h}\otimes E_{o,r}:=E_{\ell,m},\notag
    \end{align}
    for $\ell=P_{o}^{k}$ and $m= P_{r}^{h}$, expanded linearly on whole of $M_4(\mathbb C)$.  Then $\Delta$ is a coproduct on $\mathcal{O}(M_4(\mathbb C))[t^{-1}]\cong \mathcal{O}(GL(4)) $, and $(\mathcal{O}(GL(4)),\Delta)$ is a multiplier Hopf $*$-graph algebra associated with $\pi_2$, for $i,j,k,h,o,r\in\{1,\cdots,4\}$ and $\ell,m\in\{1,\cdots,8\}$.
    \item As in \cite{VD94}, there exists a unique linear map $\epsilon:\mathcal{O}(GL(4))\to\mathbb C$ taking $E_{i,j}$ to $\delta_{ij}$ such that
    \begin{align}
        &(\epsilon\otimes\id)(\Delta(E_{k,\ell})(1\otimes E_{o,r}))=E_{k,\ell}E_{o,r}\label{Equ:Ep:1}\\&(\id\otimes\epsilon)((E_{k,\ell} \otimes\Delta(E_{o,r}))=E_{k,\ell}E_{o,r},\label{Equ:Ep:2}
    \end{align}
    for all $E_{o,r},E_{k,\ell}$ associated to all $X_{o,r},X_{k,\ell}\in \mathcal{O}(GL(4))$, and $\epsilon$ is a homomorphism. Also there is a unique linear map $S:\mathcal{O}(GL(4))\to M(\mathcal{O}(GL(4)))$ taking $E_{i,j}$ to $E_{j,i}$, associated with $X_{i,j}$ and $X_{j,i}$ respectively, such that
    \begin{align}
        &m(S\otimes\id)(\Delta(E_{o,r})(1\otimes E_{k,\ell}))=\epsilon(E_{o,r}) E_{k,\ell} \label{Equ:An:1}\\&m(\id\otimes S)((E_{o,r} \otimes 1)\Delta(E_{k,\ell}))=\epsilon(E_{k,\ell}) E_{o,r},\label{Equ:An:2}
    \end{align}
    for all $E_{o,r},E_{k,\ell}$ as above, and $m$ denotes multiplication, defined as a linear map from $M(\mathcal{O}(GL(4)))\otimes \mathcal{O}(GL(4))$ to $\mathcal{O}(GL(4))$  and from $\mathcal{O}(GL(4)) \otimes M(\mathcal{O}(GL(4)))$ to $\mathcal{O}(GL(4))$. The map $S$ is an anti-homomorphism.\label{Thm:.:FMHGA:2}
\end{enumerate}
\end{thm}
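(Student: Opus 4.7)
The plan is to verify both parts by working directly with the matrix unit basis $\{E_{i,j}\}_{i,j=1}^{4}$ of $\mathcal{O}(GL(4))$ and unfolding each single index $\ell\in\{1,\ldots,4\}$ into a pair $(k,o)$ via the partition scheme $\ell=P_o^k$, which fixes the set-theoretic bijection $\{1,\ldots,4\}\simeq\{1,2\}\times\{1,2\}$. I would isolate this bijection as a preparatory lemma so that the tensor-factor decompositions become unambiguous throughout the rest of the argument.

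For part \ref{Thm:.:FMHGA:1}, I would first check that $\Delta$ is a well-defined $*$-homomorphism. Linear extension is immediate, while multiplicativity follows from the matrix unit identity $E_{\ell,m}E_{\ell',m'}=\delta_{m\ell'}E_{\ell,m'}$: writing $\ell\leftrightarrow (k,o)$, $m\leftrightarrow (h,r)$, $\ell'\leftrightarrow (k',o')$, $m'\leftrightarrow (h',r')$, the scalar $\delta_{m\ell'}$ decomposes as $\delta_{hk'}\delta_{ro'}$, which is precisely the double Kronecker appearing in $(E_{k,h}\otimes E_{o,r})(E_{k',h'}\otimes E_{o',r'})$. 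Compatibility with $(E_{i,j})^{*}=E_{j,i}$ follows since the definition of $\Delta$ is symmetric in the roles of $(k,o)$ and $(h,r)$. Coassociativity is then obtained by evaluating both $(\Delta\otimes\id)\Delta(E_{i,j})$ and $(\id\otimes\Delta)\Delta(E_{i,j})$: each produces the same triple tensor $E_{a,b}\otimes E_{c,d}\otimes E_{e,f}$ whose indices are read off by iterating the partition bijection twice, and the equality reduces to associativity of the set-theoretic identification in the preparatory lemma. Since $\mathcal{O}(GL(4))$ is unital we have $M(\mathcal{A})=\mathcal{A}$, so the cancellation conditions $\Delta(\mathcal{A})(1\otimes\mathcal{A})=\mathcal{A}\otimes\mathcal{A}=\Delta(\mathcal{A})(\mathcal{A}\otimes 1)$ become elementary spanning statements for matrix units that can be read off directly from the explicit formula for $\Delta$.

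Part \ref{Thm:.:FMHGA:2} would be handled by explicit verification on generators. With $\epsilon(E_{i,j})=\delta_{ij}$ and $S(E_{i,j})=E_{j,i}$ extended linearly, identity (\ref{Equ:Ep:1}) unpacks to $(\epsilon\otimes\id)\bigl((E_{k,h}\otimes E_{o,r})(1\otimes E_{o',r'})\bigr)=\delta_{kh}\delta_{ro'}E_{o,r'}$, and the partition identification turns the right-hand side into $E_{k,\ell}E_{o,r}$; identity (\ref{Equ:Ep:2}) is symmetric. For the antipode, $m(S\otimes\id)\bigl(\Delta(E_{k,\ell})(1\otimes E_{o,r})\bigr)$ evaluates to a product of matrix units that collapses via $E_{p,q}E_{q,s}=E_{p,s}$ to $\delta_{k\ell}E_{o,r}=\epsilon(E_{k,\ell})E_{o,r}$, and identity (\ref{Equ:An:2}) is analogous. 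That $\epsilon$ is a homomorphism and $S$ is an anti-homomorphism then follows from the elementary identities $\delta_{ij}\delta_{jk}=\delta_{ij}\delta_{ik}$ and $(E_{i,j}E_{k,\ell})^{t}=E_{\ell,k}E_{j,i}$ on matrix units.

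The main obstacle I anticipate is the careful bookkeeping of the partition indexing $\ell=P_o^k$ when iterating $\Delta$ for coassociativity: one must show that the triple-tensor indices produced by either parenthesization are compatibly identified under a canonical bijection $\{1,\ldots,4\}\simeq\{1,2\}^{2}$ extended to the triple-factor setting. Once this combinatorial statement is cleanly in place, every remaining step reduces to a short computation with matrix units and Kronecker deltas.
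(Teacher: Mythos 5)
Your overall strategy --- verify every axiom by direct computation with the matrix units $E_{i,j}$ --- is the same as the paper's, and in several places you are more careful than the paper: you propose to check that $\Delta$ is actually a $*$-homomorphism and that the cancellation conditions $\Delta(\mathcal{A})(1\otimes\mathcal{A})=\mathcal{A}\otimes\mathcal{A}=\Delta(\mathcal{A})(\mathcal{A}\otimes 1)$ hold, neither of which the paper's proof addresses (it only remarks that $\Delta(a)(1\otimes b)$ lands in $M_4(\mathbb C)\otimes M_4(\mathbb C)$, and never verifies multiplicativity). Part \ref{Thm:.:FMHGA:2} of your plan matches the paper's computation essentially line for line.

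There is, however, a genuine gap in Part \ref{Thm:.:FMHGA:1}: the preparatory lemma on which everything rests cannot be set up consistently with the map the theorem actually prescribes. Your multiplicativity argument needs the single column index $m$ of $E_{\ell,m}$ to be in bijection with the \emph{pair} $(h,r)$ of indices of the two tensor legs, so that $\delta_{m\ell'}=\delta_{hk'}\delta_{ro'}$, and your coassociativity argument needs the input index to unfold bijectively into a \emph{triple}. If the tensor legs carry indices in $\{1,\ldots,4\}$, as the theorem states ($k,h,o,r\in\{1,\ldots,4\}$, with target $M(\mathcal{A}\otimes\mathcal{A})$ for $\mathcal{A}=\mathcal{O}(M_4(\mathbb C))[t^{-1}]$), then there are $16$ pairs and $64$ triples against only $4$ values of $m$, so no such bijections exist and the Kronecker factorization fails. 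If instead you shrink the legs to $\{1,2\}$ so that $\{1,\ldots,4\}\simeq\{1,2\}^{2}$ (your stated bijection, which also conflicts with the theorem's range $\ell,m\in\{1,\ldots,8\}$), then $\Delta$ lands in $M_2(\mathbb C)\otimes M_2(\mathbb C)$ rather than in $M(\mathcal{A}\otimes\mathcal{A})$, and the triple-factor identification $\{1,\ldots,4\}\simeq\{1,2\}^{3}$ needed for coassociativity still does not exist. The paper's own proof sidesteps rather than resolves this obstruction: it never checks that $\Delta$ is a homomorphism, and it establishes coassociativity by expanding both sides of (\ref{Equ:Coa:2}) against $1_{M_4(\mathbb C)}=\sum_i E_{i,i}$ in Sweedler-type notation and then specializing indices ($r_1=1$, $o_2=1$, $o_{12}=1$, $r_{21}=1$) until the two sides coincide. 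So before running your argument you must pin down exactly which index correspondence $\Delta$ implements; as you yourself anticipate, the entire proof stands or falls with that bookkeeping, and in its present form it cannot be made to close.
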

\begin{proof}
\begin{enumerate}[label=\Roman*)]
    \item First of all please note that we simply can make the definition of $\Delta$ a little bit more rigorous by considering the following:
    \begin{align}
        \Delta:&\mathcal{O}(M_4(\mathbb C))[t^{-1}]\to M(\mathcal{O}(M_4(\mathbb C))[t^{-1}]\otimes \mathcal{O}(M_4(\mathbb C))[t^{-1}])\notag\\&\hspace*{2.46cm}\cong M(\mathcal{O}(M_4(\mathbb C))\otimes \mathcal{O}(M_4(\mathbb C)))[t^{-1}]\notag \\&\hspace*{2.46cm}\cong M(\mathcal{O}(M_4(\mathbb C)\otimes M_4(\mathbb C)))[t^{-1}] \\&\hspace*{2.46cm}\cong \mathcal{O}( M((M_4(\mathbb C)\otimes M_4(\mathbb C))))[t^{-1}] \notag\\&\hspace*{1.46cm} E_{i,j}\hspace*{0.26cm}\longmapsto E_{k,h}\otimes E_{o,r}:=E_{\ell,m},\notag
    \end{align}
    and as $\mathcal{O}(M_4(\mathbb C))[t^{-1}]$ consists of the linear operators $X_{ij}$ taking any matrix to its $ij$ component, with the underlying matrix $E_{ij}$, hence to define $\Delta$, what we need is just to consider it on the underlying matrix $E_{ij}$, and since $M_4(\mathbb C)\otimes M_4(\mathbb C)\cong M_8(\mathbb C)$ as vector spaces, and by the second part \ref{Rem:OD:2} of the Remark \ref{Rem:OD}, it is not too difficult to conclude that $M(M_8(\mathbb C))\cong M_8(M(\mathbb C))=M_8(B(\mathbb C))$, for $B(\mathbb C)$ the space of bounded operators on $\mathbb C$, and hence $\Delta$ has to be a map from $M_4(\mathbb C)$ to $M_8(B(\mathbb C))$, and by using the fact that $E_{k,h}\otimes E_{o,r}$ is in $ M_8(\mathbb C)$ which is a sub-vector space of $M_8(B(\mathbb C))$, we can certify the well-definedness of $\Delta$, for all $E_{k,h}, E_{o,r}\in M_4(\mathbb C)$ and $k,h,o,r\in\{1,\cdots,4\}$, and $\ell,m\in\{1,\cdots,8\}$. In order to prove the co-associativity condition for $\Delta$, we need to prove that condition

\begin{equation}\label{Equ:Coa:2}
    (a\otimes 1\otimes 1)(\Delta\otimes\id)(\Delta(b)(1\otimes c))=(\id\otimes\Delta)((a\otimes 1)\Delta(b))(1\otimes 1\otimes c).
\end{equation}
for any $a,b,c\in \mathcal{O}(M_4(\mathbb C))[t^{-1}]$ satisfies. 

But note that $a,b,c$ will be linear operators $X_{ij}:M_4(\mathbb C)\to \mathbb C$ taking $A$ to its $ij$'s component $a_{ij}$, with underlying matrix $E_{ij}$, and hence, in order to satisfy the condition (\ref{Equ:Coa:2}), we just need to work with the elementary matrices $E_{ij}$ for $i,j\in\{1,\cdots,n\}$. Hence, by considering $1_{M_4(\mathbb C)}=E_{1,1}+E_{2,2}+E_{3,3}+E_{4,4}$, and the definition of $\Delta$, and following the \text{left hand side of (\ref{Equ:Coa:2})}, finally we get (note that the detailed computation can be found in Appendix \ref{Ap:1:})

    \begin{align}
        &\hspace*{-6.76cm}=(E_{k,h}E_{o_{11},1}\otimes E_{1,1}E_{o_{12},1}\otimes E_{1,r_2}E_{y,z}\notag\\&\hspace*{-6.4cm}+E_{k,h}E_{o_{11},1}\otimes E_{2,2}E_{o_{12},1}\otimes E_{1,r_2}E_{y,z}\label{MHGA:1:1}\\&\hspace*{-6.4cm}+E_{k,h}E_{o_{11},1}\otimes E_{3,3}E_{o_{12},1}\otimes E_{1,r_2}E_{y,z}\notag \\&\hspace*{-6.4cm}+E_{k,h}E_{o_{11},1}\otimes E_{4,4}E_{o_{12},1}\otimes E_{1,r_2}E_{y,z},\notag
    \end{align}
and this equation is either 0, or it is equal to one of the summations. Then by setting $o_{12}=1$, and using the Sweedler notation, the equation (\ref{MHGA:1:1}) will be equal to $E_{k,h}E_{o_{1},1}\otimes E_{1,1}\otimes E_{1,r_2}E_{y,z}$.

For the \text{right hand side of (\ref{Equ:Coa:2})}, in a same approach as for the left hand side we get (note that the detailed computation can be found in Appendix \ref{Ap:1:})
    
    \begin{align}
        &\hspace*{-6.4cm}=(E_{k,h}E_{o_1,1}\otimes E_{1,r_{21}}E_{1,1}\otimes E_{1,r_{22}}E_{y,z}\notag\\&\hspace*{-5.9cm}+E_{k,h}E_{o_1,1}\otimes E_{1,r_{21}}E_{2,2}\otimes E_{1,r_{22}}E_{y,z}\label{MHGA:2:2}\\&\hspace*{-5.9cm}+E_{k,h}E_{o_1,1}\otimes E_{1,r_{21}}E_{3,3}\otimes E_{1,r_{22}}E_{y,z}\notag\\&\hspace*{-5.9cm}+E_{k,h}E_{o_1,1}\otimes E_{1,r_{21}}E_{4,4}\otimes E_{1,r_{22}}E_{y,z},\notag
    \end{align}
and exactly in a same way as in the left hand side, (\ref{MHGA:2:2}) is either 0 or it will be equal to one of the summations. So, by setting $r_{21}=1$, and using the Sweedler notation, the equation (\ref{MHGA:2:2}) will be equal to $E_{k,h}E_{o_1,1}\otimes E_{1,1}\otimes E_{1,r_{22}}E_{y,z}$, and by using the Sweedler notation it is easy to see that the two equations (\ref{MHGA:1:1}) and (\ref{MHGA:2:2}), respectively resembling the left and right sides of the co-associativity condition (\ref{Equ:Coa:2}) will be equal, and in order for $\Delta$ to be a comultiplication, we also need $\Delta(a)(1\otimes b)$ and $(a\otimes 1)\Delta(b)$ belong to $M_4(\mathbb C)\otimes M_4(\mathbb C)\cong M_8(\mathbb C)$, which is clearly satisfied. Hence $\Delta$ is a comultiplication.
    \item In order to prove equation (\ref{Equ:Ep:1}), we have
    \begin{align*}
        (\epsilon\otimes\id)(\Delta(E_{k,\ell})(1\otimes E_{o,r}))&=(\epsilon\otimes\id)(E_{k_1,\ell_1}\otimes E_{k_2,\ell_2}E_{o,r})\\&=\epsilon(E_{k_1,\ell_1}) E_{k_2,\ell_2}E_{o,r} \\&=\delta_{k_1,\ell_1} E_{k_2,\ell_2}E_{o,r} \\& \underset{k_1=\ell_1}{=} E_{k_2,\ell_2}E_{o,r},
    \end{align*}
    and by using the Sweedler notation, we have the result. Note that  The second equation (\ref{Equ:Ep:2}) will be proved in the same approach.

    In order to prove equation (\ref{Equ:An:1}), we have 
    \begin{align*}
        m(S\otimes\id)(\Delta(E_{o,r})(1\otimes E_{k,\ell}))&=m(S\otimes\id)(E_{o_1,r_1}\otimes E_{o_2,r_2}E_{k,\ell})\\&=E_{r_1,o_1}(E_{o_2,r_2}E_{k,\ell})\\&=(E_{r_1,o_1}E_{o_2,r_2})E_{k,\ell}\\&\underset{o_1=o_2}{=}E_{r_1,r_2}E_{k,\ell},
    \end{align*}
    and by letting $r=o$ and by using the Sweedler notation we get $r_2=r_1=r=o=o_1=o_2$, such that
    \begin{align*}
       &\hspace*{1.85cm} \underset{r_1=r_2=r}{=}E_{r,r}E_{k,\ell}\\&\hspace*{1.4cm} \underset{E_{r,r}=1_{\mathcal{O}(Gl(4))}}{=} E_{k,\ell},
    \end{align*}
    and in the case of $o_1\neq o_2$ and $r\neq k$, we will get 0, and hence we are done. The proof of the equation (\ref{Equ:An:2}) will proceed in the same approach. Proving the fact that $\epsilon$ is a homomorphism and $S$ an anti-homomorphism, is almost trivial.
    \end{enumerate}
\end{proof}
As before, for graph $\mathcal{G}(\pi_n)$ associated with $\pi_n$, let us consider its set of vertices and edges as $\mathcal{G}^0=\begin{cases}
    x_{ii} \qquad \text{for} \ i\in\{1,\cdots,n\},\\
    x_{ij} \qquad \text{for} \ i,j\in\{1,\cdots,n\} \ \text{and} \ i\neq j.
\end{cases} $ and $\mathcal{G}^1=\begin{cases}
    x_{i,i}\overrightarrow{\sim}x_{i,i}:=e_{i,i},\\
    x_{i,j}\overrightarrow{\sim}x_{j,i}:=e_{2i,2j},
\end{cases}$, we have the following Corollary.
\begin{cor}\label{Cor:>:>:}
\begin{enumerate}[label=\Roman*)]
  \item For the graph $C^*$-algebra $C^*(S,P):=C^*(\pi_n)=M_{n^2}(\mathbb C)$ as in Proposition \ref{Prop:CKQ:2} and the Cuntz-Krieger $\mathcal{G}_n$-family $S=\begin{cases}
       S_{e_{ii}}:=E_{i+1,1}\\
       S_{e_{ij}}:=E_{j,1} \qquad \text{for} \ j\geq i\\
       S_{e_{ij}}:=E_{1,i} \qquad \text{for} \ i\geq j
   \end{cases}$,
   define 
    \begin{align}
        \Delta:&\mathcal{O}(M_{n^2}(\mathbb C))[t^{-1}]\to M(\mathcal{O}(M_{n^2}(\mathbb C))[t^{-1}]\otimes \mathcal{O}(M_{n^2}(\mathbb C))[t^{-1}])\\&\hspace*{1.9cm} E_{i,j}\longmapsto E_{k,h}\otimes E_{o,r}:=E_{\ell,m},
    \end{align}
    for $\ell=P_{o}^{k}$ and $m= P_{r}^{h}$, expanded linearly on whole of $\mathcal{O}(M_{n^2}(\mathbb C))[t^{-1}]$, for $t$ the determinant.  Then $\Delta$ is a coproduct on $\mathcal{O}(M_{n^2}(\mathbb C))[t^{-1}]=\mathcal{O}(GL(n^2))$, and $(\mathcal{O}(GL(n^2)),\Delta)$ is a multiplier Hopf $*$-graph algebra, for $i,j,k,h,o,r\in\{1,\cdots,n^2\}$ and $\ell,m\in\{1,\cdots,2n^2\}$. 
\item  As in \cite{VD94}, and as in Theorem \ref{Thm:.:FMHGA}, there exists a unique linear map $\epsilon:\mathcal{O}(GL(n^2))\to\mathbb C$ taking $E_{i,j}$ to $\delta_{ij}$ such that
    \begin{align}
        &(\epsilon\otimes\id)(\Delta(E_{k,\ell})(1\otimes E_{o,r}))=E_{k,\ell}E_{o,r}\label{Equ:Ep:1}\\&(\id\otimes\epsilon)((E_{k,\ell} \otimes\Delta(E_{o,r}))=E_{k,\ell}E_{o,r},\label{Equ:Ep:2}
    \end{align}
    for all $E_{o,r},E_{k,\ell}$ associated to all $X_{o,r},X_{k,\ell}\in \mathcal{O}(GL(n^2))$, and $\epsilon$ is a homomorphism. Also there is a unique linear map $S:\mathcal{O}(GL(n^2))\to M(\mathcal{O}(GL(n^2)))$ taking $E_{i,j}$ to $E_{j,i}$, associated with $X_{i,j}$ and $X_{j,i}$ respectively, such that
    \begin{align}
        &m(S\otimes\id)(\Delta(E_{o,r})(1\otimes E_{k,\ell}))=\epsilon(E_{o,r}) E_{k,\ell} \label{Equ:An:1}\\&m(\id\otimes S)((E_{o,r} \otimes 1)\Delta(E_{k,\ell}))=\epsilon(E_{k,\ell}) E_{o,r},\label{Equ:An:2}
    \end{align}
    for all $E_{o,r},E_{k,\ell}$ as above, and $m$ denotes multiplication, defined as a linear map from $M(\mathcal{O}(GL(n^2)))\otimes \mathcal{O}(GL(n^2))$ to $\mathcal{O}(GL(n^2))$  and from $\mathcal{O}(GL(n^2)) \otimes M(\mathcal{O}(GL(n^2)))$ to $\mathcal{O}(GL(n^2))$. The map $S$ is an anti-homomorphism.
\end{enumerate}   
\end{cor}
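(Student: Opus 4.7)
The plan is to lift the argument of Theorem \ref{Thm:.:FMHGA} verbatim from $M_4(\mathbb C)$ to $M_{n^2}(\mathbb C)$, working uniformly on elementary matrices $E_{i,j}$ and replacing the ad hoc partition used for $n=2$ by the general block partition $\{1,\ldots,2n^2\}=\bigcup_{i,j}P_i^j$ described just before the statement. Since $\mathcal{O}(GL(n^2))=\mathcal{O}(M_{n^2}(\mathbb C))[t^{-1}]$ is spanned (as a localization) by the coordinate functionals $X_{i,j}$ with underlying matrices $E_{i,j}$, it suffices to check all structural identities on matrix units and extend linearly.

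For Part I, I would first establish well-definedness by running the same chain of identifications as in the $n=2$ proof:
$$M(\mathcal{O}(M_{n^2}(\mathbb C))[t^{-1}]\otimes\mathcal{O}(M_{n^2}(\mathbb C))[t^{-1}])\cong \mathcal{O}(M(M_{n^2}(\mathbb C)\otimes M_{n^2}(\mathbb C)))[t^{-1}],$$
invoking Remark \ref{Rem:OD}\ref{Rem:OD:2} to commute the multiplier with the matrix construction. Since $E_{k,h}\otimes E_{o,r}$ lies in $M_{n^2}(\mathbb C)\otimes M_{n^2}(\mathbb C)$, which embeds as a sub-vector space of $M(M_{n^2}(\mathbb C)\otimes M_{n^2}(\mathbb C))$, the image is legitimate. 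Next, I would verify the coassociativity identity
$$(a\otimes 1\otimes 1)(\Delta\otimes\id)(\Delta(b)(1\otimes c))=(\id\otimes\Delta)((a\otimes 1)\Delta(b))(1\otimes 1\otimes c)$$
on triples $a=E_{k,h}$, $b=E_{i,j}$, $c=E_{y,z}$ by inserting $1_{M_{n^2}}=\sum_{s=1}^{n^2}E_{s,s}$ in the middle tensor factor on each side. The multiplication rule $E_{a,b}E_{c,d}=\delta_{b,c}E_{a,d}$ collapses each side to at most one surviving value of $s$, and Sweedler notation applied through the partition prescription $\ell=P_o^k$, $m=P_r^h$ makes the two sides formally identical. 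Finally, the cancellation requirement $\Delta(a)(1\otimes b),\,(a\otimes 1)\Delta(b)\in\mathcal{O}(GL(n^2))\otimes\mathcal{O}(GL(n^2))$ is automatic because the outputs are finite sums of matrix units in the algebraic tensor product.

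For Part II, I would set $\epsilon(E_{i,j})=\delta_{ij}$ and $S(E_{i,j})=E_{j,i}$ and extend linearly, then verify equations (\ref{Equ:Ep:1})--(\ref{Equ:An:2}) by direct Sweedler computations that reproduce those of Theorem \ref{Thm:.:FMHGA}\ref{Thm:.:FMHGA:2}. For instance,
$$m(S\otimes\id)(\Delta(E_{o,r})(1\otimes E_{k,\ell}))=E_{r_1,o_1}E_{o_2,r_2}E_{k,\ell}=\delta_{o_1,o_2}E_{r_1,r_2}E_{k,\ell},$$
which under the Sweedler contraction $o_1=o_2$, $r_1=r_2=r$ and the normalization $E_{r,r}=1_{\mathcal{O}(GL(n^2))}$ collapses to $\epsilon(E_{o,r})E_{k,\ell}$. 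Uniqueness of $\epsilon$ and $S$ is forced by linearity together with the axioms; the fact that $\epsilon$ is a homomorphism follows from $\epsilon(E_{i,j}E_{k,\ell})=\delta_{jk}\delta_{i\ell}$, and $S$ being an anti-homomorphism reduces to the identity $(E_{i,j}E_{k,\ell})^T=E_{\ell,k}E_{j,i}$ on matrix units.

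The main obstacle is the index bookkeeping in the coassociativity step. One must verify that the partition $P_i^j$ is nested-compatible in the sense that the index surviving the middle $1_{M_{n^2}}$-contraction matches on the $(\Delta\otimes\id)$ and $(\id\otimes\Delta)$ sides under the same Sweedler identification; this is the genuine content behind the appearance of Appendix \ref{Ap:1:} for $n=2$, and it has to be recast so that the argument depends only on the abstract associativity of the block partition rather than on listing the four summands $E_{1,1}+E_{2,2}+E_{3,3}+E_{4,4}$ explicitly. Once this combinatorial compatibility is in place, the remaining manipulations in $M_{n^2}(\mathbb C)^{\otimes 3}$ are routine.
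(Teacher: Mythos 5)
Your proposal is correct and follows essentially the same route as the paper, which disposes of this corollary in one line by asserting that ``the proof will follow exactly the constructive approaches used in the proof of Theorem \ref{Thm:.:FMHGA}''; you simply spell out what that transfer from $M_4(\mathbb C)$ to $M_{n^2}(\mathbb C)$ entails (the multiplier identifications, inserting $1_{M_{n^2}}=\sum_s E_{s,s}$ for coassociativity, and the matrix-unit computations for $\epsilon$ and $S$). Your closing remark on the nested compatibility of the partition $P_i^j$ correctly isolates the only point where the generalization requires genuine care beyond the $n=2$ computation in the appendix.
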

\begin{proof}
    The proof will follow exactly the constructive approaches used in the proof of theorem \ref{Thm:.:FMHGA}.
\end{proof}

\begin{remark}\label{Rem:1:1:1}
    Note that using the $M_{n^2}(\mathbb C)$ matrix spaces is just to emphasis the adaptation from the finite $C^*$-graph algebras associated with the $G(\pi_n)$-Cuntz-Krieger families, and the constructions introduced in Corollary \ref{Cor:>:>:} will also satisfy for the $M_n(\mathbb C)$, and the $GL(n)$ case.
\end{remark}
    
\section{$n$-type discrete quantum groups}\label{Sec::5::}
Moving into the representation theory, the treatment might lay in the Tannakian category of the compact quantum matrix groups (Woronowicz algebras). We have the following well known definition. 
\begin{defn}
    A corepresentation of a compact matrix quantum group (Woronowicz algebra) $A$ is a biunitary matrix $v\in M_n(\mathcal{A})$ satisfying
    \begin{align*}
        \Delta(v_{ij})=\sum\limits_{k}^{}v_{ik}\otimes v_{kj},\qquad \epsilon(v_{ij})=\delta_{ij},\qquad S(v_{ij})=v_{ji}^{*},
    \end{align*}
    where $\mathcal{A}\subset A$ is the dense $*$-subalgebra of ``smooth elements''.
\end{defn}

Now consider $\mathcal{B}\subset M_d(\mathbb C)$, for $\mathcal{B}$ semisimple Lie algebra and $d$ some natural number. We can refer to the above inclusion as $\varphi:\mathcal{B}\to M_d(\mathbb C)$, which is a $d$-dimensional representation of $\mathcal{B}$. It is well known that as an algebra there is an isomorphism $\mathcal{B}=\oplus_{i=1}^{k}\mathcal{B}_i$, and integers $d_i$ such that $\mathcal{B}_i\cong M_{d_i}(\mathbb C)$ for all $i$. Then the tensor product $\mathcal{B}\otimes\mathcal{B}$ of $\mathcal{B}$ with itself can be identified with the direct sum of the algebras $\mathcal{B}_{i_{\alpha}}\otimes\mathcal{B}_{i_{\beta}}$. So we can write $\mathcal{B}\otimes\mathcal{B}=\sum\limits_{\alpha,\beta}^{}\oplus(\mathcal{B}_{\alpha}\otimes\mathcal{B}_{\beta})$. Then the multiplier algebra $M(\mathcal{B}\otimes\mathcal{B})$ can be identified with the product $\Pi_{\alpha,\beta}(\mathcal{B}_{\alpha}\otimes\mathcal{B}_{\beta})$, that is with elements of the form $(x_{\alpha\beta})$ where $x_{\alpha\beta}\in\mathcal{B}_{\alpha}\otimes\mathcal{B}_{\beta}$ for all $\alpha, \beta$, with no further restrictions. Then there is the following definition of the discrete quantum group in the sense of Van Daele. \cite{AD96}
\begin{defn}
    A discrete quantum group is a pair $(A,\Delta)$ where $A$ is a direct sum of full matrix algebras and $\Delta$ is a comultiplication on $A$ making $A$ into a multiplier Hopf $*$-algebra, and the dual of it will be called a compact quantum group.
\end{defn}

Now the idea is to apply these kind of structures to the Corollary \ref{Cor:>:>:}, and obtain some analogous sort of compact and discrete quantum groups for the proposed initial class of examples of the multiplier Hopf $*$-graph algebras, but since $GL(n)$ is not semisimple, hence in order to use Artin-Wedderburn Theorem, we have to employ another objects, and what could be better that $SL(n)$ the special linear group, and we have the following.

We can have an almost same result as in the Corollary \ref{Cor:>:>:} for the $SL(n)$ case as follows.

\begin{thm}\label{Thm:>:>:}
\begin{enumerate}[label=\Roman*)]
  \item For $SL(n,\mathbb C):=SL(n)$, the special linear group, we can define 
    \begin{align}
        \Delta:&\mathcal{O}(M_{n^2}(\mathbb C))[(t-\mathfrak{b})^{-1}]\to M(\mathcal{O}(M_{n^2}(\mathbb C))[(t-\mathfrak{b})^{-1}]\otimes \mathcal{O}(M_{n^2}(\mathbb C))[(t-\mathfrak{b})^{-1}])\\&\hspace*{2.9cm} E_{i,j}\longmapsto E_{k,h}\otimes E_{o,r}:=E_{\ell,m},
    \end{align}
    for $\ell=P_{o}^{k}$ and $m= P_{r}^{h}$, expanded linearly on whole of $\mathcal{O}(M_{n^2}(\mathbb C))[(t-\mathfrak{b})^{-1}]$, for $t$ the determinant, and $\mathfrak{b}\in\mathbb N_0/\{1\}$.  Then $\Delta$ is a coproduct on $\mathcal{O}(M_{n^2}(\mathbb C))[(t-\mathfrak{b})^{-1}]=\mathcal{O}(SL(n^2))$, and $(\mathcal{O}(SL(n^2)),\Delta)$ is a multiplier Hopf $*$-graph algebra, for $i,j,k,h,o,r\in\{1,\cdots,n^2\}$ and $\ell,m\in\{1,\cdots,2n^2\}$. 
\item  As in \cite{VD94}, and as in Theorem \ref{Thm:.:FMHGA}, there exists a unique linear map $\epsilon:\mathcal{O}(SL(n^2))\to\mathbb C$ taking $E_{i,j}$ to $\delta_{ij}$ such that
    \begin{align}
        &(\epsilon\otimes\id)(\Delta(E_{k,\ell})(1\otimes E_{o,r}))=E_{k,\ell}E_{o,r}\label{Equ:Ep:1}\\&(\id\otimes\epsilon)((E_{k,\ell} \otimes\Delta(E_{o,r}))=E_{k,\ell}E_{o,r},\label{Equ:Ep:2}
    \end{align}
    for all $E_{o,r},E_{k,\ell}$ associated to all $X_{o,r},X_{k,\ell}\in \mathcal{O}(SL(n^2))$, and $\epsilon$ is a homomorphism. Also there is a unique linear map $S:\mathcal{O}(SL(n))\to M(\mathcal{O}(SL(n^2)))$ taking $E_{i,j}$ to $E_{j,i}$, associated with $X_{i,j}$ and $X_{j,i}$ respectively, such that
    \begin{align}
        &m(S\otimes\id)(\Delta(E_{o,r})(1\otimes E_{k,\ell}))=\epsilon(E_{o,r}) E_{k,\ell} \label{Equ:An:1}\\&m(\id\otimes S)((E_{o,r} \otimes 1)\Delta(E_{k,\ell}))=\epsilon(E_{k,\ell}) E_{o,r},\label{Equ:An:2}
    \end{align}
    for all $E_{o,r},E_{k,\ell}$ as above, and $m$ denotes multiplication, defined as a linear map from $M(\mathcal{O}(SL(n^2)))\otimes \mathcal{O}(SL(n^2))$ to $\mathcal{O}(SL(n^2))$  and from $\mathcal{O}(SL(n^2)) \otimes M(\mathcal{O}(SL(n^2)))$ to $\mathcal{O}(SL(n^2))$. The map $S$ is an anti-homomorphism.
\end{enumerate}   
\end{thm}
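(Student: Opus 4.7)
The plan is to mirror the proofs of Theorem \ref{Thm:.:FMHGA} and Corollary \ref{Cor:>:>:}, and to isolate the new content as the verification that all structure maps descend to, or extend through, the localization at $(t-\mathfrak{b})^{-1}$. Concretely, I would first record that on the ambient polynomial/coordinate algebra $\mathcal{O}(M_{n^2}(\mathbb{C}))$, the assignment $E_{i,j}\mapsto E_{k,h}\otimes E_{o,r}=E_{\ell,m}$ with $\ell=P_o^k$, $m=P_r^h$ is well defined by exactly the argument in part \ref{Thm:.:FMHGA:1} of Theorem \ref{Thm:.:FMHGA}: using Remark \ref{Rem:OD} we identify $M(\mathcal{O}(M_{n^2}(\mathbb{C}))\otimes\mathcal{O}(M_{n^2}(\mathbb{C})))\cong \mathcal{O}(M_{2n^2}(B(\mathbb{C})))$ as vector spaces, and the image $E_{\ell,m}$ visibly lies inside this space. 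Extending linearly and localizing, I would then observe that $\Delta$, $\epsilon$, and $S$ pass to the localized algebra $\mathcal{O}(M_{n^2}(\mathbb{C}))[(t-\mathfrak{b})^{-1}]\cong\mathcal{O}(SL(n^2))$, provided $(t-\mathfrak{b})$ is mapped to an invertible element of the appropriate multiplier algebra under each of these three maps.

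Second, I would verify coassociativity exactly as in equations \eqref{MHGA:1:1}-\eqref{MHGA:2:2} of Theorem \ref{Thm:.:FMHGA}, now with $1_{\mathcal{O}(SL(n^2))}=\sum_{s=1}^{n^2}E_{s,s}$ in place of the $n=4$ case. Expanding $(a\otimes 1\otimes 1)(\Delta\otimes\id)(\Delta(b)(1\otimes c))$ and $(\id\otimes\Delta)((a\otimes 1)\Delta(b))(1\otimes 1\otimes c)$ on matrix units, all but one summand vanishes by orthogonality of the $E_{s,s}$, and the two surviving summands agree after applying Sweedler notation with the pairing $o_{12}=1$ on one side and $r_{21}=1$ on the other. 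Cancellation, i.e.\ the fact that $\Delta(a)(1\otimes b)$ and $(a\otimes 1)\Delta(b)$ lie in $\mathcal{O}(SL(n^2))\otimes\mathcal{O}(SL(n^2))$, is automatic from the form $E_{k,h}\otimes E_{o,r}E_{k,\ell}$, which is supported on finitely many matrix units.

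Third, for the counit and antipode axioms \eqref{Equ:Ep:1}-\eqref{Equ:An:2}, the matrix-unit computations are literally the ones carried out in part \ref{Thm:.:FMHGA:2} of Theorem \ref{Thm:.:FMHGA}, because the Kronecker deltas $\epsilon(E_{i,j})=\delta_{ij}$ and the transposition rule $S(E_{i,j})=E_{j,i}$ interact with the product $E_{k,\ell}E_{o,r}=\delta_{\ell,o}E_{k,r}$ in exactly the same way irrespective of whether we work in $GL$ or in $SL$. Checking that $\epsilon$ is a homomorphism and $S$ an anti-homomorphism reduces to a one-line calculation on matrix units, and linearity extends both statements to the full localized algebra.

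The principal obstacle is the compatibility of the localization at $(t-\mathfrak{b})^{-1}$ with the three structure maps, and in particular with the counit. Under the standard identification, the determinant $t$ is group-like, so $\Delta(t)=t\otimes t$ and $\epsilon(t)=1$; for $\epsilon$ to descend to the localization we need $\epsilon(t-\mathfrak{b})=1-\mathfrak{b}$ to be invertible in $\mathbb{C}$, which is precisely the reason for excluding $\mathfrak{b}=1$ and restricting to $\mathfrak{b}\in\mathbb{N}_0\setminus\{1\}$. For $\Delta$ one must exhibit an inverse of $\Delta(t-\mathfrak{b})=t\otimes t-\mathfrak{b}(1\otimes 1)$ inside $M(\mathcal{O}(SL(n^2))\otimes\mathcal{O}(SL(n^2)))$, and for $S$ one must check that $S(t-\mathfrak{b})$ remains invertible; both follow from $t$ being group-like together with the hypothesis $\mathfrak{b}\neq 1$, and this is the only genuinely new verification beyond the $GL$-case of Corollary \ref{Cor:>:>:}.
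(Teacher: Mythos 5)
Your proposal is correct and follows essentially the same route as the paper: the paper's own proof of Theorem \ref{Thm:>:>:} consists of a single sentence deferring to the $GL(n^2)$ case, i.e.\ to the matrix-unit computations of Theorem \ref{Thm:.:FMHGA} and Corollary \ref{Cor:>:>:}, which is exactly the skeleton you reproduce. The one place where you go beyond the paper is your third paragraph: the paper nowhere verifies that $\Delta$, $\epsilon$, and $S$ are compatible with inverting $t-\mathfrak{b}$, nor does it explain the exclusion $\mathfrak{b}\neq 1$; your observation that $t$ is group-like, so that $\epsilon(t-\mathfrak{b})=1-\mathfrak{b}$ must be nonzero and $\Delta(t-\mathfrak{b})=t\otimes t-\mathfrak{b}(1\otimes 1)$ must be inverted in the multiplier algebra, is precisely the genuinely new content of the $SL$ case and supplies a justification for the hypothesis $\mathfrak{b}\in\mathbb{N}_0\setminus\{1\}$ that the paper leaves implicit. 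In that respect your write-up is more complete than the paper's, while remaining the same argument in substance.
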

\begin{proof}
    The proof will proceed in a same approach as in the $GL(n^2)$ case.
\end{proof}
\begin{remark}
The same notation issue raised in Remark \ref{Rem:1:1:1}, also will hold here. Meaning that the Theorem \ref{Thm:>:>:} also satisfies for $SL(n)$, and after this point we will work in the space of $n\times n$ matrices instead of the space of $n^2\times n^2$ matrices. 
\end{remark}

And as $SL(n,\mathbb C)$ is semisimple, hence the Artin-Wederburn Theorem will work, and we might have a discrete quantum group structure on $(\mathcal{O}(SL(n)),\Delta)$ in the sense of Van Daele. Let us prove a result almost analogue to a result in \cite[Proposition 3.2]{AD96}.
\begin{prop}\label{Prop:::.:>:2}
    Consider $0\neq F\in\mathcal{O}(SL(n))$ such that $\Delta(G)(1\otimes F)=G\otimes F$, and $\Delta(G)(F\otimes 1)=F\otimes G$, for all $G\in\mathcal{O}(SL(n))$. Then we might assume that $F$ is a projection, and prove that it is unique. Moreover, $GF=FG=\epsilon(G)F$ satisfies for all $G\in\mathcal{O}(SL(n))$.
\end{prop}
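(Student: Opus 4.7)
My plan is to exploit the two defining conditions via the counit $\epsilon$ to derive a linear eigenvalue-type relation for $F$, and then use commutativity of the coordinate algebra $\mathcal{O}(SL(n))$ together with the specialization $G=F$ to extract idempotency and, via the $*$-structure, the projection property. Writing $\Delta(G)=\sum G_{(1)}\otimes G_{(2)}$ in Sweedler notation, the first hypothesis reads $\sum G_{(1)}\otimes G_{(2)}F=G\otimes F$; applying $\epsilon\otimes\id$ and invoking $\sum\epsilon(G_{(1)})G_{(2)}=G$ yields $GF=\epsilon(G)F$. The analogous computation with $\id\otimes\epsilon$ on the second hypothesis gives the same relation, and since $\mathcal{O}(SL(n))$ is commutative as a coordinate ring, $FG=GF=\epsilon(G)F$, which settles the ``Moreover'' claim.

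Specializing to $G=F$ then gives $F^{2}=\epsilon(F)F$. I would first argue that $\epsilon(F)\neq 0$: otherwise $FH=0$ for every $H\in\ker\epsilon$, and since $\ker\epsilon$ is a non-trivial ideal and $\mathcal{O}(SL(n))$ is nondegenerate (in fact an integral domain), this would force $F=0$, contradicting $F\neq 0$. Replacing $F$ by $F/\epsilon(F)$ yields $F^{2}=F$ and $\epsilon(F)=1$, so $F$ is an idempotent. For self-adjointness, I would apply the $*$-operation to the first hypothesis: since $\Delta$ is a $*$-homomorphism, $F^{*}$ satisfies $(1\otimes F^{*})\Delta(H)=H\otimes F^{*}$ for every $H$, which by commutativity is the original hypothesis. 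Hence $F^{*}$ also verifies the assumptions, and by the uniqueness argument below one obtains $F^{*}=cF$ with $|c|=1$; a phase adjustment then gives $F^{*}=F$, making $F$ a genuine projection.

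For uniqueness, suppose $F_{1},F_{2}\in\mathcal{O}(SL(n))$ both satisfy the hypotheses. Applying the key relation with $G=F_{1}$ and $F=F_{2}$ gives $F_{1}F_{2}=\epsilon(F_{1})F_{2}$, and symmetrically $F_{2}F_{1}=\epsilon(F_{2})F_{1}$. Commutativity yields $\epsilon(F_{1})F_{2}=\epsilon(F_{2})F_{1}$, and since $\epsilon(F_{i})\neq 0$, the normalized projections $F_{i}/\epsilon(F_{i})$ coincide. The main obstacle I anticipate is the nonvanishing $\epsilon(F)\neq 0$: a clean proof requires exploiting the specific algebraic structure of $\mathcal{O}(SL(n))$ rather than general Hopf-algebraic manipulations. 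A secondary technical point is the self-adjointness upgrade, which depends on checking that the unusual coproduct of Theorem \ref{Thm:>:>:} really is a $*$-homomorphism and that commutativity genuinely lets one swap the order of multiplications on the two tensor factors.
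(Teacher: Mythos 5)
Your proposal is correct in outline but takes a genuinely different route from the paper. You run the standard Van Daele argument from \cite[Proposition 3.2]{AD96}: apply the counit to one leg of each hypothesis to obtain $GF=FG=\epsilon(G)F$, specialize to $G=F$ to get $F^{2}=\epsilon(F)F$, normalize, and deduce uniqueness from $\epsilon(F_{1})F_{2}=\epsilon(F_{2})F_{1}$. The paper never invokes the counit at all: it sets $G=E_{ij}$, inserts the explicit formula $\Delta(E_{ij})=E_{k,h}\otimes E_{o,r}$ from Theorem \ref{Thm:>:>:}, and index-chases. From $E_{ij}\otimes F=E_{k,h}\otimes E_{o,r}F$ it concludes that $F$ must itself be an elementary matrix $E_{r,f}$, from the second hypothesis that $F=E_{h,g}$, and the resulting index constraints ($j=h$, $i=k$, $r=o$, $f=g$) pin $F$ down, giving uniqueness and the projection property concretely. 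Your approach delivers the ``Moreover'' clause and idempotency cleanly and abstractly --- something the paper's proof largely glosses over --- while the paper's approach buys an explicit identification of $F$.

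Two of your supporting justifications do not match the algebra actually in play and need replacing. In Theorem \ref{Thm:>:>:} the elements of $\mathcal{O}(SL(n))$ are manipulated as elementary matrices $E_{ij}$ under matrix multiplication (e.g.\ $E_{k,h}E_{h,g}=E_{k,g}$ throughout the paper), so this realization is neither commutative nor an integral domain. Consequently your derivation of the \emph{left}-sided relation $FG=\epsilon(G)F$ cannot lean on commutativity, and it needs care anyway because both stated hypotheses place $F$ to the right inside the legs of $\Delta(G)$, so a naive counit application returns $GF$ twice rather than $FG$ once. Likewise, $\epsilon(F)\neq0$ cannot be obtained from ``no zero divisors'': since the whole point of Section \ref{Sec::5::} is that the algebra is a direct sum of matrix algebras, the correct replacement is the semisimplicity argument of \cite[Proposition 3.2]{AD96} --- the relations $GF=FG=\epsilon(G)F$ make $\mathbb{C}F$ a one-dimensional two-sided ideal, which in a semisimple algebra is a direct summand isomorphic to $\mathbb{C}$, whence $F^{2}\neq0$ and $\epsilon(F)\neq0$. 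With those repairs your argument closes, and your self-adjointness step finishes as you suggest: once $\epsilon(F)=1$, the relation $F^{*}=cF$ forces $c=\overline{\epsilon(F)}=1$.
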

\begin{proof}
    The proof is almost trivial, since the objects we are intended to work with are the elementary matrices and the product in this case is well-defined. But, let us have a sketch of the proof. Let $G=E_{ij}$ and $F$ be any element in $\mathcal{O}(SL(n))$. Then we have

    \begin{align}
        E_{ij}\otimes F=\Delta(E_{ij})(1\otimes F)&=(E_{kh}\otimes E_{or})(1\otimes F)\notag\\&=E_{kh}\otimes E_{or}F\label{Equ:p:P1}
    \end{align}
    hence, here we should have $F=E_{rf}$ for some $f\in\{1,\cdots,n\}$, and therefore we have
    \begin{align*}
       \hspace*{2.6cm} \ref{Equ:p:P1}&=E_{kh}\otimes E_{or}E_{rf}\\&=E_{kh}\otimes E_{of}\\&=E_{P_{o}^{k}P_{f}^{h}}.
    \end{align*}
    Therefore, we should have
    $$\hspace*{1.1cm}E_{P_{r}^{i}P_{f}^{j}}=E_{P_{o}^{k}P_{f}^{h}},$$
    meaning that 
    \begin{equation}\label{Equ::l:L:1}
        j=h, i=k, r=o,
    \end{equation}
    
    Following the other equation, we get

\begin{align}
        F\otimes E_{ij}=\Delta(E_{ij})(F\otimes 1)&=(E_{kh}\otimes E_{or})(F\otimes 1)\notag\\&=E_{kh}F\otimes E_{or}\label{Equ:p:P2}
    \end{align}
    hence, here we should have $F=E_{hg}$ for some $g\in\{1,\cdots,n\}$, and therefore we have
    \begin{align*}
       \hspace*{2.6cm} \ref{Equ:p:P2}&=E_{kh}E_{hg}\otimes E_{or}\\&=E_{kg}\otimes E_{or}\\&=E_{P_{o}^{k}P_{r}^{g}}.
    \end{align*}
    Therefore, we should have
    $$\hspace*{0.95cm}E_{P_{i}^{h}P_{j}^{g}}=E_{P_{o}^{k}P_{r}^{g}},$$
    and from (\ref{Equ::l:L:1}) we obtain that
    \begin{equation}\label{Equ::l:L:2}
        k=h, i=o, r=j,
    \end{equation}
    and $f$ has to be equal to $g$, which will provide us with the desired result!
\end{proof}

In order to move forward, consider the linear maps $T_1$ and $T_2$ as in our previous paper \cite{RH24} and in \cite{AD96}, defined on $\mathcal{O}(SL(n))\otimes\mathcal{O}(SL(n))$ by
\begin{align*}
    &T_1(a\otimes b)=\Delta(a)(1\otimes b)\\& T_2(a\otimes b)=(a\otimes 1)\Delta(b),
\end{align*}
for all $a,b\in\mathcal{O}(SL(n))$, and we have the following result, which is almost analogue to the result in \cite[Proposition 3.3]{AD96}.
\begin{prop}\label{Prop:::.:>:1}
    For $F,G\in\mathcal{O}(SL(n))$, if $\Delta(G)(1\otimes F)=0$ implies $F=0$, then the maps $T_1$ and $T_2$ are injective.
\end{prop}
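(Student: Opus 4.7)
The strategy is to prove injectivity of $T_1$ first, and then transfer the conclusion to $T_2$ using the antipode $S$ constructed in Theorem~\ref{Thm:>:>:}. Recall that $S$ is an anti-homomorphism with $S(E_{i,j})=E_{j,i}$ on matrix units, hence a bijection of the natural linear basis of $\mathcal{O}(SL(n))$.

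For $T_1$, I would argue by a minimality contradiction. Suppose a nonzero $x=\sum_{i=1}^{N}a_i\otimes b_i\in\ker T_1$ exists, with $N$ chosen minimal among all nonzero kernel elements and with the $a_i$'s linearly independent (achievable by collecting terms). Expanding each $a_i$ in the matrix-unit basis and applying the explicit single-tensor formula $\Delta(E_{p,q})=E_{k,h}\otimes E_{o,r}$ from Theorem~\ref{Thm:>:>:}, the relation
\[
\sum_{i=1}^{N}\Delta(a_i)(1\otimes b_i)=0
\]
unfolds into a finite sum of elementary tensors whose first legs range over a fixed finite set of matrix units. Grouping terms by a common first leg and using the linear independence of those matrix units in the first tensor factor reduces the kernel equation, block by block, to a list of single-element identities of the form $\Delta(G)(1\otimes F)=0$. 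The standing hypothesis then gives $F=0$ in each block. Back-substituting through the (invertible) change of basis that separated the blocks forces each $b_i$ to vanish, contradicting $x\neq 0$.

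For $T_2$, I would compose with the antipode. Using the identity $(S\otimes S)\circ\tau\circ\Delta=\Delta\circ S$ (where $\tau$ denotes the flip) together with the bijectivity of $S$ on the matrix-unit basis, an element $y\in\ker T_2$ is transported to an element of $\ker T_1$; since $T_1$ is injective and $S$ is bijective, $y=0$ follows. Alternatively, the same block-splitting argument as for $T_1$ can be run symmetrically on $T_2$, the right-sided analogue of the cancellation hypothesis being supplied by Proposition~\ref{Prop:::.:>:2}.

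\textbf{The main obstacle} I anticipate lies in the block-decoupling step: one must check that in the expansion $\sum_i\Delta(a_i)(1\otimes b_i)=0$ the contributions of distinct $a_i$'s do not cancel across distinct blocks, so that each resulting block equation is genuinely of single-element form to which the hypothesis applies. This reduces to a combinatorial property of the partition-based indexing $\ell=P_o^k$, $m=P_r^h$ introduced before Theorem~\ref{Thm:.:FMHGA}, but verifying it carefully is where the actual work of the proof resides.
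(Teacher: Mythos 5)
Your strategy is genuinely different from the paper's. The paper's entire proof is a one-line deferral: it asserts that the argument ``is not requiring the properties of the specific defined $\Delta$'' and points to \cite[Proposition 3.3]{AD96}, where Van Daele obtains injectivity of $T_1$ and $T_2$ from coassociativity together with the distinguished element $h$ satisfying $\Delta(a)(1\otimes h)=a\otimes h$ (whose analogue here is Proposition~\ref{Prop:::.:>:2}); no basis is ever chosen there. You instead exploit the very particular feature of this paper's comultiplication --- each matrix unit is sent to a \emph{single} elementary tensor --- to run a concrete block-decoupling argument in the matrix-unit basis, and you handle $T_2$ by transporting through the antipode. That is a legitimately different route, and in principle a more self-contained one, since it is not obvious that Van Daele's general argument transfers verbatim to the $\Delta$ of Theorem~\ref{Thm:>:>:}.

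However, as written the proof has a genuine gap, and it is exactly the one you flag at the end without closing. The reduction ``block by block to single-element identities $\Delta(G)(1\otimes F)=0$'' is valid only if distinct matrix units $E_{p,q}$ are sent by $\Delta$ to elementary tensors with \emph{distinct} first legs $E_{k,h}$. If two units $E_{p,q}\neq E_{p',q'}$ share a first leg, the corresponding block equation reads $E_{o,r}u+E_{o',r'}u'=0$ with $u=\sum_i c_{i,pq}b_i$ and $u'=\sum_i c_{i,p'q'}b_i$, which is not of the form $\Delta(G)(1\otimes F)=0$ for any single $G$, so the hypothesis of the proposition cannot be applied to it. Whether the first-leg assignment $(p,q)\mapsto(k,h)$ determined by $\ell=P_{o}^{k}$, $m=P_{r}^{h}$ is injective is precisely the combinatorial fact you defer, and the partition-based indexing stated before Theorem~\ref{Thm:.:FMHGA} does not obviously supply it (the declared ranges of $k,h,o,r$ and $\ell,m$ are not even mutually consistent, so one cannot simply count). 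The $T_2$ half carries a second unverified ingredient: the identity $(S\otimes S)\circ\tau\circ\Delta=\Delta\circ S$ is standard for multiplier Hopf algebras but has not been checked for the explicit $\Delta$ and $S$ of Theorem~\ref{Thm:>:>:}. Until the decoupling property is established, the argument does not go through, and the basis-free route of \cite[Proposition 3.3]{AD96} --- deriving injectivity from coassociativity and Proposition~\ref{Prop:::.:>:2} alone --- remains the only complete one available.
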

\begin{proof}
Since the proof is not requiring the properties of the specific defined $\Delta$ for $\mathcal{O}(SL(n))$, hence we will just follow the approach used in \cite[Proposition 3.3]{AD96}.
\end{proof}
Then by using the following characterization Theorem, we can conclude one of main results of this paper.
\begin{thm}\cite[Theorem 3.4]{AD96}\label{Thm:::.:>:}
    Let $A$ be such that it satisfies in Artin-Wedderburn decomposition Theorem, and could be written as a direct sum of matrix algebras, and $\Delta$ be a comultiplication on it. Assume $\Delta(A)(A\otimes 1)$ and $\Delta(A)(1\otimes A)$ are equal to $A\otimes A$. Assume there is a non-zero element $h$ such that $\Delta(a)(1\otimes h)=a\otimes h$ for all $a$ and that $\Delta(h)(1\otimes a)=0$ only if $a=0$. Then $(A,\Delta)$ is a discrete quantum group.
\end{thm}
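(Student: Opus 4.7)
The plan is to verify that $(A,\Delta)$ satisfies every axiom of a multiplier Hopf $*$-algebra in the sense of Van Daele; since $A$ is by hypothesis a direct sum of full matrix algebras, this will immediately upgrade to the paper's definition of a discrete quantum group. Of the multiplier Hopf axioms, the non-degenerate $*$-algebra structure of $A$ is furnished by Artin-Wedderburn (each summand $M_{d_i}(\mathbb{C})$ is a $*$-algebra with a manifest non-degenerate product), and the fact that $\Delta$ is a coassociative non-degenerate $*$-homomorphism into $M(A\otimes A)$ is part of the statement ``$\Delta$ is a comultiplication.'' Thus the only remaining axioms are the bijectivity of the canonical maps
\[
T_1(a\otimes b)=\Delta(a)(1\otimes b), \qquad T_2(a\otimes b)=(a\otimes 1)\Delta(b),
\]
viewed as linear endomorphisms of $A\otimes A$.

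Surjectivity of $T_1$ and $T_2$ is exactly the density hypothesis $\Delta(A)(1\otimes A)=\Delta(A)(A\otimes 1)=A\otimes A$, so nothing needs to be done there. For injectivity I would invoke Proposition \ref{Prop:::.:>:1}, which reduces the task to showing that $\Delta(G)(1\otimes F)=0$ forces $F=0$ for arbitrary $G,F\in A$. The hypothesis grants this conclusion only for the single element $G=h$; the core of the proof is to propagate from $G=h$ to every $G\in A$.

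The propagation I have in mind exploits the absorbing identity $\Delta(a)(1\otimes h)=a\otimes h$ in tandem with the surjectivity of $T_2$. Given a kernel witness $\Delta(G)(1\otimes F)=0$, I would first use $\Delta(A)(A\otimes 1)=A\otimes A$ to write $h\otimes 1 = \sum_i (x_i\otimes 1)\Delta(G_i)$ for some $x_i,G_i\in A$, then apply $\mathrm{id}\otimes \Delta$ to the vanishing relation, slot it against $x_i\otimes 1\otimes 1$, and invoke coassociativity to realign the tensor factors. Using the absorbing identity at the middle slot telescopes the three-fold expression down to $\Delta(h)(1\otimes F')=0$ for an element $F'$ obtained from $F$ by a linear operation that is invertible because the $x_i,G_i$ can be chosen from the support of $h$. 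The hypothesis for $h$ then yields $F'=0$, hence $F=0$. The mirror argument for $T_2$ runs analogously, using the $*$-operation to convert the right absorbing identity into its left-handed counterpart.

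The main obstacle is the bookkeeping inside the telescoping step: each application of $(\mathrm{id}\otimes\Delta)$ or $(\Delta\otimes\mathrm{id})$ shifts which tensor slot carries the designated element $h$, and the absorbing identity must be deployed at precisely the slot that coassociativity leaves available. Once $T_1$ and $T_2$ are confirmed bijective, the counit $\epsilon$ and antipode $S$ are constructed from $\Delta$ and $h$ by the standard Van Daele machinery, but these structure maps are not themselves required to finish; the bijectivity of $T_1,T_2$ together with the direct sum decomposition of $A$ is enough to conclude that $(A,\Delta)$ is a discrete quantum group.
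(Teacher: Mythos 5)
First, a point of comparison: the paper does not prove this statement at all --- it is quoted verbatim from \cite[Theorem 3.4]{AD96}, and the paper's own work in this section consists only of verifying its hypotheses for $(\mathcal{O}(SL(n)),\Delta)$ via Propositions \ref{Prop:::.:>:2} and \ref{Prop:::.:>:1}. Your overall skeleton does agree with Van Daele's original argument: the non-degenerate $*$-algebra structure comes from the matrix decomposition, surjectivity of $T_1,T_2$ is exactly the hypothesis $\Delta(A)(1\otimes A)=\Delta(A)(A\otimes 1)=A\otimes A$, injectivity is delegated to the analogue of Proposition \ref{Prop:::.:>:1}, and bijectivity of $T_1,T_2$ is indeed all that is required beyond the comultiplication axioms to land in the multiplier Hopf $*$-algebra framework, hence in the definition of a discrete quantum group.

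There is, however, a genuine gap at what you call the core of the proof. You read Proposition \ref{Prop:::.:>:1} as demanding the implication ``$\Delta(G)(1\otimes F)=0\Rightarrow F=0$'' for \emph{arbitrary} $G\in A$, and you then set out to propagate it from $G=h$ to every $G$. That target is false as quantified (take $G=0$, or $G$ supported away from the relevant block; even for general non-zero $G$ it is neither needed nor obviously true), so no telescoping can establish it, and the telescoping itself is only gestured at --- the assertion that $F\mapsto F'$ is ``invertible because the $x_i,G_i$ can be chosen from the support of $h$'' is exactly the point that would need proof. The correct reading, the one matching \cite[Proposition 3.3]{AD96} of which Proposition \ref{Prop:::.:>:1} is the stated analogue, is that the implication is required only for the single distinguished element $G=h$ of Proposition \ref{Prop:::.:>:2}, with $F$ ranging over $A$. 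That is literally the theorem's hypothesis ``$\Delta(h)(1\otimes a)=0$ only if $a=0$'', so no propagation is needed and injectivity of $T_1$ and $T_2$ follows at once. The substantive content, which your proposal leaves inside the black box, is the proof of that proposition itself --- deducing injectivity of $T_1,T_2$ from non-degeneracy at $h$ alone, using coassociativity and the absorbing identity $\Delta(a)(1\otimes h)=a\otimes h$; the manipulations you sketch would be better spent there. A minor further point: the left-handed identity $\Delta(a)(h\otimes 1)=h\otimes a$ is obtained in \cite{AD96} algebraically from the right-handed one and the surjectivity hypotheses (it is part of Proposition \ref{Prop:::.:>:2}), not via the $*$-operation; applying $*$ to the absorbing identity only turns right multiplication by $1\otimes h$ into left multiplication by $1\otimes h^{*}$ in the \emph{same} tensor leg, so your mirror argument for $T_2$ does not go through as stated.
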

By using Theorem \ref{Thm:::.:>:}, and Propositions \ref{Prop:::.:>:1}, and \ref{Prop:::.:>:2}, and the defining relations of the multiplier Hopf $*$-graph algebra $(\mathcal{O}(SL(n)),\Delta)$ constructed in Theorem \ref{Thm:>:>:}, we have the following result.
\begin{cor}\label{COr:2:2:3:4}
    $(\mathcal{O}(SL(n)),\Delta)$, constructed in Theorem \ref{Thm:>:>:}, possesses the structures of being a discrete quantum group, and will be called the type $n$-special discrete quantum group graph.
\end{cor}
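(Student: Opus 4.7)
The plan is to apply the characterization Theorem \ref{Thm:::.:>:} to $(\mathcal{O}(SL(n)),\Delta)$ directly. This theorem requires four ingredients: a decomposition of the underlying algebra into full matrix algebras, a coassociative comultiplication $\Delta$, the density/cancellation conditions $\Delta(A)(A\otimes 1) = \Delta(A)(1\otimes A) = A\otimes A$, and a nonzero element $h$ satisfying $\Delta(a)(1\otimes h) = a\otimes h$ for all $a$, together with the nondegeneracy implication $\Delta(h)(1\otimes a) = 0 \Rightarrow a = 0$.

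For the matrix-algebra decomposition, I would invoke the semisimplicity of $SL(n,\mathbb C)$ and apply Artin--Wedderburn to $\mathcal{O}(SL(n))$, as anticipated in Section \ref{Sec::5::}; this is precisely why the excerpt replaces $GL(n)$ by $SL(n)$. The existence of $\Delta$ as a nondegenerate $*$-homomorphism with the coassociativity property is exactly part \ref{Thm:.:FMHGA:1} of Theorem \ref{Thm:>:>:}. The invariant element $h$ and the accompanying multiplicative identity $Gh = hG = \epsilon(G)h$ are supplied by Proposition \ref{Prop:::.:>:2}: the nonzero projection $F$ produced there plays the role of $h$. The implication $\Delta(h)(1\otimes a) = 0 \Rightarrow a = 0$ is a direct instance of the injectivity of $T_1$ established in Proposition \ref{Prop:::.:>:1}, applied with $G$ replaced by $F$.

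The one step that is not merely a citation is verifying the density relations $\Delta(A)(A\otimes 1) = A\otimes A$ and $\Delta(A)(1\otimes A) = A\otimes A$. Here I would rely on the explicit formula $\Delta(E_{i,j}) = E_{k,h}\otimes E_{o,r}$ together with the matrix-unit identity $E_{k,h}E_{p,q} = \delta_{h,p}E_{k,q}$ to show that by suitably choosing $E_{i,j}$ and right-multiplying by $E_{p,q}\otimes 1$ one can reach any target $E_{a,b}\otimes E_{c,d}$ in $A\otimes A$; a symmetric argument handles the left-hand cancellation. The partition indexing scheme $(i,j)\leftrightarrow (P^{k}_{o},P^{h}_{r})$ introduced before Theorem \ref{Thm:.:FMHGA} is bijective by construction, so for every prescribed quadruple $(k,h,o,r)$ there is a generator $E_{i,j}$ mapping to it under $\Delta$, which supplies the spanning family needed.

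The main obstacle I anticipate is keeping the partition-index bookkeeping consistent when verifying density; the rest of the proof is a matter of assembling Proposition \ref{Prop:::.:>:2}, Proposition \ref{Prop:::.:>:1}, and Theorem \ref{Thm:>:>:} into the hypotheses of Theorem \ref{Thm:::.:>:}. Once density is established, Theorem \ref{Thm:::.:>:} applies verbatim and identifies $(\mathcal{O}(SL(n)),\Delta)$ as a discrete quantum group in the sense of Van Daele, which is exactly the statement of the corollary. Naming this object the type-$n$-special discrete quantum group graph is then a definitional convention justified by the construction.
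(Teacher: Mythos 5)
Your proposal follows essentially the same route as the paper, which likewise obtains the corollary simply by assembling Theorem \ref{Thm:>:>:}, Propositions \ref{Prop:::.:>:2} and \ref{Prop:::.:>:1}, and the characterization Theorem \ref{Thm:::.:>:}, with no further argument supplied. The one place you go beyond the paper is in noting that the cancellation hypotheses $\Delta(A)(A\otimes 1)=\Delta(A)(1\otimes A)=A\otimes A$ of Theorem \ref{Thm:::.:>:} still need explicit verification; the paper leaves this condition unaddressed, so your sketched matrix-unit computation is a welcome addition rather than a restatement.
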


Now, there arises a natural question raised by Wang \cite{SW99}, related with the $q$-deformation of the finite groups of Lie type, and somehow generalizing the $q$-deformation of the classical Lie groups.

\begin{que}\label{OP:<:<:}
    Do finite groups of Lie type have an analogue of q-deformations into finite quantum groups?
\end{que}

In \cite{SW99}, a deformation of Rieffel type for finite groups (actually finite quantum groups) that contain an Abelian subgroup is constructed. However this deformation is not the analogue of the $q$-deformations, it is rather an analogue of the Drinfel'd twisting.

On the other hand, we hope that structures introduced in this paper will finally help us and will provide us with some new directions in order to consider the above mentioned open problem \ref{OP:<:<:}, specially if we could generalize our results for the case of $SU(n,\mathbb C), SO(n,\mathbb C), SP(n,\mathbb C)$ and the whole category of the finite groups of Lie type!

\vspace*{0.2cm}
To do this, let us start from $O(n,\mathbb C):=M_n(\mathbb C)/(A^TA-I)$, and $U(n,\mathbb C):=M_n(\mathbb C)/(AA^{*}-I)$ and define $ \mathcal{O}(SO(n)):=\mathcal{O}(SO(n,\mathbb C))=\mathcal{O}(O(n,\mathbb C))[(t-\mathfrak{b})^{-1}$, and $ \mathcal{O}(SU(n)):=\mathcal{O}(SU(n,\mathbb C))=\mathcal{O}(U(n,\mathbb C))[(t-\mathfrak{b})^{-1}$ for $t$ the determinant and $\mathfrak{b}\in\mathbb N_0/\{1\}$. Then we may have a result in a same approach as in the Corollary \ref{COr:2:2:3:4} for $\mathcal{O}(SO(n))$ and $\mathcal{O}(SU(n))$ as follows:
\begin{cor}
    $(\mathcal{O}(SO(n)),\Delta)$, and $(\mathcal{O}(SU(n)),\Delta)$ for $\Delta$ constructed as in Theorem \ref{Thm:>:>:}, possess the structures of being a discrete quantum group, and will be called the type $n$-orthogonal and the type $n$-unitary discrete quantum group graphs, respectively.
\end{cor}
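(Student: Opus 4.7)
The plan is to adapt the strategy used for Corollary \ref{COr:2:2:3:4} to the two new target algebras. First, I would verify that $\mathcal{O}(SO(n))$ and $\mathcal{O}(SU(n))$ satisfy the hypotheses of Theorem \ref{Thm:::.:>:}, i.e.\ that they decompose as direct sums of full matrix algebras. Since $SO(n,\mathbb C)$ and $SU(n,\mathbb C)$ are semisimple Lie groups, the Artin--Wedderburn / Peter--Weyl machinery applies verbatim to the localised coordinate rings $\mathcal{O}(O(n,\mathbb C))[(t-\mathfrak b)^{-1}]$ and $\mathcal{O}(U(n,\mathbb C))[(t-\mathfrak b)^{-1}]$, so this step is largely bookkeeping once the localisation at $t-\mathfrak b$ is handled exactly as in the $SL(n)$ case.

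Second, I would check that the comultiplication $\Delta$ of Theorem \ref{Thm:>:>:}, defined on elementary matrices via $E_{i,j}\mapsto E_{k,h}\otimes E_{o,r}= E_{P_o^k,P_r^h}$, descends to a well-defined coassociative $*$-homomorphism on the quotients by the orthogonality relation $A^TA-I$ and the unitarity relation $AA^*-I$ respectively. Coassociativity itself was already established at the level of elementary matrices in Theorem \ref{Thm:.:FMHGA}, so no new calculation of \eqref{Equ:Coa:2} is needed; the work lies in showing that $\Delta$ sends the defining ideals into the corresponding ideals of the tensor product, after which Remark \ref{Rem:1:1:1} allows us to pass from $M_{n^2}(\mathbb C)$ to $M_n(\mathbb C)$.

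Third, having $\Delta$ available on each of $\mathcal{O}(SO(n))$ and $\mathcal{O}(SU(n))$, I would transport Propositions \ref{Prop:::.:>:2} and \ref{Prop:::.:>:1} to this setting: by running the same matrix-unit argument with the partition labels $P_o^k, P_r^h$, one locates a non-zero projection $F$ satisfying $\Delta(G)(1\otimes F)=G\otimes F$ and $GF=FG=\epsilon(G)F$, and then shows that the maps
\begin{align*}
T_1(a\otimes b)=\Delta(a)(1\otimes b),\qquad T_2(a\otimes b)=(a\otimes 1)\Delta(b)
\end{align*}
are injective, hence that $\Delta(A)(A\otimes 1)=\Delta(A)(1\otimes A)=A\otimes A$. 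The characterisation Theorem \ref{Thm:::.:>:} then yields the discrete-quantum-group structure on each pair $(\mathcal{O}(SO(n)),\Delta)$ and $(\mathcal{O}(SU(n)),\Delta)$, and one concludes by giving them the names proposed in the statement.

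The main obstacle I anticipate is precisely the compatibility of $\Delta$ with the orthogonality and unitarity relations. Unlike the $GL(n)$ and $SL(n)$ cases, where the only additional relation is the invertibility of the determinant (which is handled by localisation), the relations $A^TA=I$ and $AA^*=I$ are genuinely quadratic in the matrix entries, and the elementary-matrix formula for $\Delta$ was tailored to the magic-unitary / $\pi_n$ combinatorics rather than to an orthogonality constraint. Verifying that $\Delta$ descends to the quotient will either require rewriting $\Delta$ on a set of generators adapted to these relations, or exhibiting a direct combinatorial identity on the partition labels $P_o^k$ that mirrors $\sum_k a_{ki}a_{kj}=\delta_{ij}$; once this compatibility is secured, the remainder of the argument should be a routine transfer from Corollary \ref{COr:2:2:3:4}.
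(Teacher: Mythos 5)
Your outline follows the same route the paper itself takes: the paper's entire proof of this corollary is the single sentence that it ``will be proceed as in the Corollary \ref{COr:2:2:3:4}'', i.e.\ transport the $SL(n)$ argument (Propositions \ref{Prop:::.:>:2} and \ref{Prop:::.:>:1} feeding into the characterization Theorem \ref{Thm:::.:>:}) to $\mathcal{O}(SO(n))$ and $\mathcal{O}(SU(n))$. In structure, therefore, you and the paper agree, and your first and third steps are a faithful (indeed more explicit) rendering of what the paper intends.

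However, the obstacle you flag in your final paragraph --- that the comultiplication of Theorem \ref{Thm:>:>:} must be shown to descend to the quotients of the coordinate ring by the ideals generated by the entries of $A^TA-I$ and $AA^{*}-I$ --- is a genuine gap, and neither your proposal nor the paper closes it. The coassociativity computation of Theorem \ref{Thm:.:FMHGA} is carried out purely on elementary matrices $E_{i,j}$ with the partition bookkeeping $\ell=P_{o}^{k}$, $m=P_{r}^{h}$; nothing in that computation interacts with a quadratic relation of the form $\sum_{k}a_{ki}a_{kj}=\delta_{ij}$, and there is no a priori reason the assignment $E_{i,j}\mapsto E_{k,h}\otimes E_{o,r}$ carries the defining ideal of $O(n,\mathbb C)$ (respectively $U(n,\mathbb C)$) into the corresponding ideal of the tensor product. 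Until that is verified --- or until $\Delta$ is redefined on generators adapted to the orthogonality and unitarity relations, as you suggest as an alternative --- the map $\Delta$ is not known to exist on $\mathcal{O}(SO(n))$ or $\mathcal{O}(SU(n))$ at all, and the subsequent transport of Propositions \ref{Prop:::.:>:2} and \ref{Prop:::.:>:1} cannot begin. You have correctly located the missing step, but you have not supplied it, and the paper does not supply it either; as written, the argument is complete only modulo this unproved compatibility.
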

\begin{proof}
    The proof will be proceed as in the Corollary \ref{COr:2:2:3:4}.
\end{proof}

\section{Concluding remarks}
We believe that the research conducted in this paper is very interesting, and if we want to describe it in just one sentence it would be ``from simplicity to complexity''! 

\hspace*{0.2cm}Once again, and as in our previous work \cite{RH24}, we started by initializing our toy example $\mathbb K[M_q(n)]$, and by getting motivated through the graph $C^*$-algebras and the Cuntz-Krieger $\Gamma$-family for the finite connected directed graph $\Gamma$, we succeeded in introducing the first class of initial examples of the multiplier Hopf $*$-graph algebras, by looking at the finite $C^*$-graph algebras associated with the commuting matrices $\pi_i$ of the adjacency matrices $\Pi_i$ associated to the coordinate ring of $\mathbb K[M_q(i)]$, for $i\in\{2,\cdots,n\}$. 

\hspace*{0.2cm} Despite the sharpness and the constructive tools used in introducing the results obtained in this paper, we have to admit that there still remains some open problems, such as the Claim proposed in \ref{Cl:Op:1}, and the raised question of the existing multiplier Hopf $*$-graph algebras based on just the graph algebras coming from the set of certain finite locally connected (directed) graphs, as has been illustrated in the Question \ref{Qu:MGHA:1}. Also to try to do the same approaches for another set of regulated (directed) graphs, might provide us with some other interesting results!

By moving to the representation theory, in Section \ref{Sec::5::}, we introduced the $n$-type (special $\&$ unitary) discrete quantum groups based on the structures introduced and studied by Van Daele. We proved that $\mathcal{O}(SL(n))$ and $\mathcal{O}(SU(n))$ equipped with the comultiplication from the Corollary \ref{Cor:>:>:}, making $\mathcal{O}(GL(n))$ into a multiplier Hopf $*$-graph algebra, possesses the structures of being discrete quantum groups, and paving the way to move even further in proving an almost same assertions for the other finite groups of Lie type, and making one step forward in attacking the famous question \ref{OP:<:<:} raised by Wang \cite{SW99}, asking if there are $q$-deformations of finite groups of Lie type in the sense of finite quantum groups, which still is unknown!

For the future works, we have to point it out that the constructions made in this paper are in direct relation with the theory of quantum computing and the quantum information theory through the Cuntz-Krieger $\Gamma$-families $\{S,P\}$, and the quantum graph theory, for some certain locally connected finite directed graphs $\Gamma$. Our next paper will concern about these (pre-)theories, on which we will try to propose a new method of studying the (entangled/not-entangled) $q$-qubit quantum systems!

So, we strongly encourage the interested reader to stay tuned, and we are very pleased to welcome any comments concerning the constructions made in this paper, and in our previous paper \cite{RH24}, and hopefully the next paper \cite{RH243}!

\section{Acknowledgement}
\hspace*{0.2cm}  
F.R. is supported by the Azarbaijan Shahid Madani University under the grant contract No. 117.d.22844 - 08.07.2023. F.R. is also partially supported by a grant from the Institute for Research in Fundamental Sciences (IPM), with the grant (No. 1403170014). Also F.R. would like to express his warmest thanks and gratitude to the late Professor N. A. Vavilov, whom he sees himself indebted for the continuation of his scientific life, as being a member of his 2021 \textit{Ph.D.} defense committee!

Apart from the acknowledgments related with the financial supports, F.R. also would like to express his sincere thanks and gratitude for the hospitality of the Department of Mathematics of the Azarbaijan Shahid Madani University, where all of the work conducted in this paper has been extracted and has been finalized!



\def\cprime{$'$} \def\cprime{$'$} \def\cprime{$'$}
\providecommand{\bysame}{\leavevmode\hbox to3em{\hrulefill}\thinspace}
\providecommand{\MR}{\relax\ifhmode\unskip\space\fi MR }
\providecommand{\MRhref}[2]{%
	\href{http://www.ams.org/mathscinet-getitem?mr=#1}{#2}
}
\providecommand{\href}[2]{#2}

\let\itshape\upshape








\section{Appendix}\label{Ap:1:}
This appendix is concerned with the complete proof of the co-associativity of $\Delta$, defined in (\ref{Thm:.:FMHGA:1:1}) in Theorem \ref{Thm:.:FMHGA}.\ref{Thm:.:FMHGA:1}.

Following the remaining of the proof of the Theorem, we have to prove that for all $a,b,c\in\mathcal{O}(M_4(\mathbb C))[t^{-1}]$, we have that $\Delta$ satisfies in the following co-associativity condition

\begin{equation}\label{Equ:Coa:1}
    (a\otimes 1\otimes 1)(\Delta\otimes\id)(\Delta(b)(1\otimes c))=(\id\otimes\Delta)((a\otimes 1)\Delta(b))(1\otimes 1\otimes c).
\end{equation}
Note that as before $a,b,c$ will be linear operators $X_{ij}:M_4(\mathbb C)\to \mathbb C$ taking $A$ to its $ij$'s component $a_{ij}$, with underlying matrix $E_{ij}$, and hence, in order to satisfy the condition (\ref{Equ:Coa:1}), we just need to work with the elementary matrices $E_{ij}$ for $i,j\in\{1,\cdots,n\}$. 

Consider $1_{M_4(\mathbb C)}=E_{1,1}+E_{2,2}+E_{3,3}+E_{4,4}$, and let us try to prove the co-associativity condition. For the \text{left hand side of (\ref{Equ:Coa})} we have
    
    \begin{align*}
      &\hspace*{-2.3cm}  =(E_{k,h}\otimes 1_{M_4(\mathbb C)}\otimes 1_{M_4(\mathbb C)})(\Delta\otimes \id)(\Delta(E_{o,r})(1_{M_4(\mathbb C)}\otimes E_{y,z}))\\&\hspace*{-2.3cm}=(E_{k,h}\otimes 1_{M_4(\mathbb C)}\otimes 1_{M_4(\mathbb C)})(\Delta\otimes \id)(\Delta(E_{o,r})((E_{1,1}+E_{2,2}+E_{3,3}+E_{4,4})\otimes E_{y,z}))\\&\hspace*{-2.3cm}=(E_{k,h}\otimes 1_{M_4(\mathbb C)}\otimes 1_{M_4(\mathbb C)})(\Delta\otimes \id)(\Delta(E_{o,r})\\&\hspace*{-1.76cm}(E_{1,1}\otimes E_{y,z}+E_{2,2}\otimes E_{y,z}+E_{3,3}\otimes E_{y,z}+E_{4,4}\otimes E_{y,z}))\\&\hspace*{-2.3cm}=(E_{k,h}\otimes 1_{M_4(\mathbb C)}\otimes 1_{M_4(\mathbb C)})(\Delta\otimes \id)((E_{o_1,r_1}\otimes E_{o_2,r_2})\\&\hspace*{-1.76cm}(E_{1,1}\otimes E_{y,z}+E_{2,2}\otimes E_{y,z}+E_{3,3}\otimes E_{y,z}+E_{4,4}\otimes E_{y,z}))\\&\hspace*{-2.3cm}=(E_{k,h}\otimes 1_{M_4(\mathbb C)}\otimes 1_{M_4(\mathbb C)})(\Delta\otimes \id)\\&\hspace*{-1.76cm}((E_{o_1,r_1}E_{1,1}\otimes E_{o_2,r_2}E_{y,z}+E_{o_1,r_1}E_{2,2}\otimes E_{o_2,r_2}E_{y,z}\\&\hspace*{-1.9cm}+E_{o_1,r_1}E_{3,3}\otimes E_{o_2,r_2}E_{y,z}+E_{o_1,r_1}E_{4,4}\otimes E_{o_2,r_2}E_{y,z}))
    \end{align*}
    take $r_1=1$, then we have
    \begin{align*}
        &\hspace*{-3.15cm}=(E_{k,h}\otimes 1_{M_4(\mathbb C)}\otimes 1_{M_4(\mathbb C)})(\Delta\otimes \id)(E_{o_1,1}\otimes E_{o_2,r_2}E_{y,z})\\&\hspace*{-3.15cm}=(E_{k,h}\otimes (E_{1,1}+E_{2,2}+E_{3,3}+E_{4,4})\otimes (E_{1,1}+E_{2,2}+E_{3,3}+E_{4,4}))\\&\hspace*{-2.65cm}(\Delta\otimes \id)(E_{o_1,1}\otimes E_{o_2,r_2}E_{y,z})\\&\hspace*{-3.15cm}=(E_{k,h}\otimes E_{1,1}\otimes E_{1,1}+ E_{k,h}\otimes E_{1,1}\otimes E_{2,2}+ E_{k,h}\otimes E_{1,1}\otimes E_{3,3}\\&\hspace*{-2.7cm}+ E_{k,h}\otimes E_{1,1}\otimes E_{4,4}+ E_{k,h}\otimes E_{2,2}\otimes E_{1,1}+ E_{k,h}\otimes E_{2,2}\otimes E_{2,2}\\&\hspace*{-2.7cm}+ E_{k,h}\otimes E_{2,2}\otimes E_{3,3}+ E_{k,h}\otimes E_{2,2}\otimes E_{4,4}+ E_{k,h}\otimes E_{3,3}\otimes E_{1,1} \\&\hspace*{-2.7cm}+E_{k,h}\otimes E_{3,3}\otimes E_{2,2} +E_{k,h}\otimes E_{3,3}\otimes E_{3,3} +E_{k,h}\otimes E_{3,3}\otimes E_{4,4} \\&\hspace*{-2.7cm}+E_{k,h}\otimes E_{4,4}\otimes E_{1,1}+E_{k,h}\otimes E_{4,4}\otimes E_{2,2}+E_{k,h}\otimes E_{4,4}\otimes E_{3,3}\\&\hspace*{-2.7cm}+E_{k,h}\otimes E_{4,4}\otimes E_{4,4})(E_{o_{11},1}\otimes E_{o_{12},1}\otimes E_{o_2,r_2}E_{y,z})
        \\&\hspace*{-3.15cm}=(E_{k,h}E_{o_{11},1}\otimes E_{1,1}E_{o_{12},1}\otimes E_{1,1}E_{o_2,r_2}E_{y,z}\\&\hspace*{-2.7cm}+E_{k,h}E_{o_{11},1}\otimes E_{1,1}E_{o_{12},1}\otimes E_{2,2}E_{o_2,r_2}E_{y,z}\\&\hspace*{-2.7cm}+E_{k,h}E_{o_{11},1}\otimes E_{1,1}E_{o_{12},1}\otimes E_{3,3}E_{o_2,r_2}E_{y,z}\\&\hspace*{-2.7cm}+E_{k,h}E_{o_{11},1}\otimes E_{1,1}E_{o_{12},1}\otimes E_{4,4}E_{o_2,r_2}E_{y,z}\\&\hspace*{-2.7cm}+E_{k,h}E_{o_{11},1}\otimes E_{2,2}E_{o_{12},1}\otimes E_{1,1}E_{o_2,r_2}E_{y,z}
        \end{align*}

        \begin{align*}
        &\hspace*{-5.3cm}+E_{k,h}E_{o_{11},1}\otimes E_{2,2}E_{o_{12},1}\otimes E_{2,2}E_{o_2,r_2}E_{y,z}\\&\hspace*{-5.3cm}+E_{k,h}E_{o_{11},1}\otimes E_{2,2}E_{o_{12},1}\otimes E_{3,3}E_{o_2,r_2}E_{y,z}\\&\hspace*{-5.3cm}+ E_{k,h}E_{o_{11},1}\otimes E_{2,2}E_{o_{12},1}\otimes E_{4,4}E_{o_2,r_2}E_{y,z}\\&\hspace*{-5.3cm}+ E_{k,h}E_{o_{11},1}\otimes E_{3,3}E_{o_{12},1}\otimes E_{1,1}E_{o_2,r_2}E_{y,z} \\&\hspace*{-5.3cm}+E_{k,h}E_{o_{11},1}\otimes E_{3,3}E_{o_{12},1}\otimes E_{2,2}E_{o_2,r_2}E_{y,z} \\&\hspace*{-5.3cm}+E_{k,h}E_{o_{11},1}\otimes E_{3,3}E_{o_{12},1}\otimes E_{3,3}E_{o_2,r_2}E_{y,z} \\&\hspace*{-5.3cm}+E_{k,h}E_{o_{11},1}\otimes E_{3,3}E_{o_{12},1}\otimes E_{4,4}E_{o_2,r_2}E_{y,z} \\&\hspace*{-5.3cm}+E_{k,h}E_{o_{11},1}\otimes E_{4,4}E_{o_{12},1}\otimes E_{1,1}E_{o_2,r_2}E_{y,z}\\&\hspace*{-5.3cm}+E_{k,h}E_{o_{11},1}\otimes E_{4,4}E_{o_{12},1}\otimes E_{2,2}E_{o_2,r_2}E_{y,z}\\&\hspace*{-5.3cm}+E_{k,h}E_{o_{11},1}\otimes E_{4,4}E_{o_{12},1}\otimes E_{3,3}E_{o_2,r_2}E_{y,z}\\&\hspace*{-5.3cm}+E_{k,h}E_{o_{11},1}\otimes E_{4,4}E_{o_{12},1}\otimes E_{4,4}E_{o_2,r_2}E_{y,z}),
    \end{align*}
    and by the next consideration, and by letting $o_2=1$, we have

    \begin{align*}
        &\hspace*{-6.15cm}=(E_{k,h}E_{o_{11},1}\otimes E_{1,1}E_{o_{12},1}\otimes E_{1,1}E_{1,r_2}E_{y,z}\\&\hspace*{-5.7cm}+E_{k,h}E_{o_{11},1}\otimes E_{1,1}E_{o_{12},1}\otimes E_{2,2}E_{1,r_2}E_{y,z}\\&\hspace*{-5.7cm}+E_{k,h}E_{o_{11},1}\otimes E_{1,1}E_{o_{12},1}\otimes E_{3,3}E_{1,r_2}E_{y,z}\\&\hspace*{-5.7cm}+E_{k,h}E_{o_{11},1}\otimes E_{1,1}E_{o_{12},1}\otimes E_{4,4}E_{1,r_2}E_{y,z}\\&\hspace*{-5.7cm}+E_{k,h}E_{o_{11},1}\otimes E_{2,2}E_{o_{12},1}\otimes E_{1,1}E_{1,r_2}E_{y,z}
        \end{align*}
        \begin{align*}
        &\hspace*{-5.7cm}+E_{k,h}E_{o_{11},1}\otimes E_{2,2}E_{o_{12},1}\otimes E_{2,2}E_{1,r_2}E_{y,z}\\&\hspace*{-5.7cm}+E_{k,h}E_{o_{11},1}\otimes E_{2,2}E_{o_{12},1}\otimes E_{3,3}E_{1,r_2}E_{y,z}\\&\hspace*{-5.7cm}+ E_{k,h}E_{o_{11},1}\otimes E_{2,2}E_{o_{12},1}\otimes E_{4,4}E_{1,r_2}E_{y,z}\\&\hspace*{-5.7cm}+E_{k,h}E_{o_{11},1}\otimes E_{3,3}E_{o_{12},1}\otimes E_{1,1}E_{1,r_2}E_{y,z} \\&\hspace*{-5.7cm}+E_{k,h}E_{o_{11},1}\otimes E_{3,3}E_{o_{12},1}\otimes E_{2,2}E_{1,r_2}E_{y,z} \\&\hspace*{-5.7cm}+E_{k,h}E_{o_{11},1}\otimes E_{3,3}E_{o_{12},1}\otimes E_{3,3}E_{1,r_2}E_{y,z} \\&\hspace*{-5.7cm}+E_{k,h}E_{o_{11},1}\otimes E_{3,3}E_{o_{12},1}\otimes E_{4,4}E_{1,r_2}E_{y,z} \\&\hspace*{-5.7cm}+E_{k,h}E_{o_{11},1}\otimes E_{4,4}E_{o_{12},1}\otimes E_{1,1}E_{1,r_2}E_{y,z}\\&\hspace*{-5.7cm}+E_{k,h}E_{o_{11},1}\otimes E_{4,4}E_{o_{12},1}\otimes E_{2,2}E_{1,r_2}E_{y,z}\\&\hspace*{-5.7cm}+E_{k,h}E_{o_{11},1}\otimes E_{4,4}E_{o_{12},1}\otimes E_{3,3}E_{1,r_2}E_{y,z}\\&\hspace*{-5.7cm}+E_{k,h}E_{o_{11},1}\otimes E_{4,4}E_{o_{12},1}\otimes E_{4,4}E_{1,r_2}E_{y,z}),
    \end{align*}
    resulting the following

    \begin{align}
        &\hspace*{-6.76cm}=(E_{k,h}E_{o_{11},1}\otimes E_{1,1}E_{o_{12},1}\otimes E_{1,r_2}E_{y,z}\notag\\&\hspace*{-6.4cm}+E_{k,h}E_{o_{11},1}\otimes E_{2,2}E_{o_{12},1}\otimes E_{1,r_2}E_{y,z}\label{MHGA:1}\\&\hspace*{-6.4cm}+E_{k,h}E_{o_{11},1}\otimes E_{3,3}E_{o_{12},1}\otimes E_{1,r_2}E_{y,z}\notag \\&\hspace*{-6.4cm}+E_{k,h}E_{o_{11},1}\otimes E_{4,4}E_{o_{12},1}\otimes E_{1,r_2}E_{y,z},\notag
    \end{align}
    and this equation is either 0, or it is equal to one of the summations. Then by setting $o_{12}=1$, and using the Sweedler notation, the equation (\ref{MHGA:1}) will be equal to $E_{k,h}E_{o_{1},1}\otimes E_{1,1}\otimes E_{1,r_2}E_{y,z}$.

    For the \text{right hand side of (\ref{Equ:Coa})} we have
    \begin{align*}
       &\hspace*{-2.3cm} =(\id\otimes\Delta)((E_{k,h}\otimes 1_{M_4(\mathbb C)})\Delta(E_{o,r}))(1_{M_4(\mathbb C)}\otimes1_{M_4(\mathbb C)}\otimes E_{y,z})\\&\hspace*{-2.3cm} =(\id\otimes\Delta)((E_{k,h}\otimes 1_{M_4(\mathbb C)})\Delta(E_{o,r}))\\&\hspace*{-1.73cm} ((E_{1,1}+E_{2,2}+E_{3,3}+E_{4,4})\otimes(E_{1,1}+E_{2,2}+E_{3,3}+E_{4,4})\otimes E_{y,z})
     \end{align*}
     \begin{align*}
     &\hspace*{-2.3cm}=(\id\otimes\Delta)((E_{k,h}\otimes (E_{1,1}+E_{2,2}+E_{3,3}+E_{4,4}))(E_{o_1,r_1}\otimes E_{o_2,r_2}))\\&\hspace*{-1.73cm} (E_{1,1}\otimes E_{1,1}\otimes E_{y,z}+E_{1,1}\otimes E_{2,2}\otimes E_{y,z}\\&\hspace*{-1.73cm}+E_{1,1}\otimes E_{3,3}\otimes E_{y,z}+E_{1,1}\otimes E_{4,4}\otimes E_{y,z}\\&\hspace*{-1.73cm}+E_{2,2}\otimes E_{1,1}\otimes E_{y,z} +E_{2,2}\otimes E_{2,2}\otimes E_{y,z}\\&\hspace*{-1.73cm}+E_{2,2}\otimes E_{3,3}\otimes E_{y,z}+E_{2,2}\otimes E_{4,4}\otimes E_{y,z}\\&\hspace*{-1.73cm}+E_{3,3}\otimes E_{1,1}\otimes E_{y,z}+E_{3,3}\otimes E_{2,2}\otimes E_{y,z}\\&\hspace*{-1.73cm}+E_{3,3}\otimes E_{3,3}\otimes E_{y,z}+E_{3,3}\otimes E_{4,4}\otimes E_{y,z}\\&\hspace*{-1.73cm}+E_{4,4}\otimes E_{1,1}\otimes E_{y,z}+E_{4,4}\otimes E_{2,2}\otimes E_{y,z}\\&\hspace*{-1.73cm}+E_{4,4}\otimes E_{3,3}\otimes E_{y,z}+E_{4,4}\otimes E_{4,4}\otimes E_{y,z})\\&\hspace*{-2.3cm} =(\id\otimes\Delta)((E_{k,h}\otimes E_{1,1}+E_{k,h}\otimes E_{2,2}+E_{k,h}\otimes E_{3,3}+E_{k,h}\otimes E_{4,4})\\&\hspace*{-1.73cm} (E_{o_1,r_1}\otimes E_{o_2,r_2}))(E_{1,1}\otimes E_{1,1}\otimes E_{y,z}+E_{1,1}\otimes E_{2,2}\otimes E_{y,z}\\&\hspace*{-1.73cm}+E_{1,1}\otimes E_{3,3}\otimes E_{y,z}+E_{1,1}\otimes E_{4,4}\otimes E_{y,z}\\&\hspace*{-1.73cm}+E_{2,2}\otimes E_{1,1}\otimes E_{y,z} +E_{2,2}\otimes E_{2,2}\otimes E_{y,z}\\&\hspace*{-1.73cm}+E_{2,2}\otimes E_{3,3}\otimes E_{y,z}+E_{2,2}\otimes E_{4,4}\otimes E_{y,z}\\&\hspace*{-1.73cm}+E_{3,3}\otimes E_{1,1}\otimes E_{y,z}+E_{3,3}\otimes E_{2,2}\otimes E_{y,z}\\&\hspace*{-1.73cm}+E_{3,3}\otimes E_{3,3}\otimes E_{y,z}+E_{3,3}\otimes E_{4,4}\otimes E_{y,z}\\&\hspace*{-1.73cm}+E_{4,4}\otimes E_{1,1}\otimes E_{y,z}+E_{4,4}\otimes E_{2,2}\otimes E_{y,z}\\&\hspace*{-1.73cm}+E_{4,4}\otimes E_{3,3}\otimes E_{y,z}+E_{4,4}\otimes E_{4,4}\otimes E_{y,z})\\&\hspace*{-2.3cm} =(\id\otimes\Delta)(E_{k,h}E_{o_1,r_1}\otimes E_{1,1}E_{o_2,r_2}+E_{k,h}E_{o_1,r_1}\otimes E_{2,2}E_{o_2,r_2}\\&\hspace*{-1.73cm}+E_{k,h}E_{o_1,r_1}\otimes E_{3,3}E_{o_2,r_2}+E_{k,h}E_{o_1,r_1}\otimes E_{4,4}E_{o_2,r_2})\\&\hspace*{-1.73cm} (E_{1,1}\otimes E_{1,1}\otimes E_{y,z}+E_{1,1}\otimes E_{2,2}\otimes E_{y,z}\\&\hspace*{-1.73cm}+E_{1,1}\otimes E_{3,3}\otimes E_{y,z}+E_{1,1}\otimes E_{4,4}\otimes E_{y,z}\\&\hspace*{-1.73cm}+E_{2,2}\otimes E_{1,1}\otimes E_{y,z} +E_{2,2}\otimes E_{2,2}\otimes E_{y,z}\\&\hspace*{-1.73cm}+E_{2,2}\otimes E_{3,3}\otimes E_{y,z}+E_{2,2}\otimes E_{4,4}\otimes E_{y,z}\\&\hspace*{-1.73cm}+E_{3,3}\otimes E_{1,1}\otimes E_{y,z}+E_{3,3}\otimes E_{2,2}\otimes E_{y,z}\\&\hspace*{-1.73cm}+E_{3,3}\otimes E_{3,3}\otimes E_{y,z}+E_{3,3}\otimes E_{4,4}\otimes E_{y,z}\\&\hspace*{-1.73cm}+E_{4,4}\otimes E_{1,1}\otimes E_{y,z}+E_{4,4}\otimes E_{2,2}\otimes E_{y,z}\\&\hspace*{-1.73cm}+E_{4,4}\otimes E_{3,3}\otimes E_{y,z}+E_{4,4}\otimes E_{4,4}\otimes E_{y,z}),
    \end{align*}

    take $o_2=1$, then we have
    \begin{align*}
        &\hspace*{-4.3cm} =(\id\otimes\Delta)(E_{k,h}E_{o_1,r_1}\otimes E_{1,r_2})\\&\hspace*{-3.75cm} (E_{1,1}\otimes E_{1,1}\otimes E_{y,z}+E_{1,1}\otimes E_{2,2}\otimes E_{y,z}\\&\hspace*{-3.75cm}+E_{1,1}\otimes E_{3,3}\otimes E_{y,z}+E_{1,1}\otimes E_{4,4}\otimes E_{y,z}\\&\hspace*{-3.75cm}+E_{2,2}\otimes E_{1,1}\otimes E_{y,z} +E_{2,2}\otimes E_{2,2}\otimes E_{y,z}\\&\hspace*{-3.75cm}+E_{2,2}\otimes E_{3,3}\otimes E_{y,z}+E_{2,2}\otimes E_{4,4}\otimes E_{y,z}\\&\hspace*{-3.75cm}+E_{3,3}\otimes E_{1,1}\otimes E_{y,z}+E_{3,3}\otimes E_{2,2}\otimes E_{y,z}
    \end{align*}
    \begin{align*}
    &\hspace*{-3.75cm}+E_{3,3}\otimes E_{3,3}\otimes E_{y,z}+E_{3,3}\otimes E_{4,4}\otimes E_{y,z}\\&\hspace*{-3.75cm}+E_{4,4}\otimes E_{1,1}\otimes E_{y,z}+E_{4,4}\otimes E_{2,2}\otimes E_{y,z}\\&\hspace*{-3.75cm}+E_{4,4}\otimes E_{3,3}\otimes E_{y,z}+E_{4,4}\otimes E_{4,4}\otimes E_{y,z})\\&\hspace*{-4.3cm}=(E_{k,h}E_{o_1,r_1}\otimes E_{1,r_{21}}\otimes E_{1,r_{22}})\\&\hspace*{-3.75cm} (E_{1,1}\otimes E_{1,1}\otimes E_{y,z}+E_{1,1}\otimes E_{2,2}\otimes E_{y,z}\\&\hspace*{-3.75cm}+E_{1,1}\otimes E_{3,3}\otimes E_{y,z}+E_{1,1}\otimes E_{4,4}\otimes E_{y,z}\\&\hspace*{-3.75cm}+E_{2,2}\otimes E_{1,1}\otimes E_{y,z} +E_{2,2}\otimes E_{2,2}\otimes E_{y,z}\\&\hspace*{-3.75cm}+E_{2,2}\otimes E_{3,3}\otimes E_{y,z}+E_{2,2}\otimes E_{4,4}\otimes E_{y,z}\\&\hspace*{-3.75cm}+E_{3,3}\otimes E_{1,1}\otimes E_{y,z}+E_{3,3}\otimes E_{2,2}\otimes E_{y,z}\\&\hspace*{-3.75cm}+E_{3,3}\otimes E_{3,3}\otimes E_{y,z}+E_{3,3}\otimes E_{4,4}\otimes E_{y,z}\\&\hspace*{-3.75cm}+E_{4,4}\otimes E_{1,1}\otimes E_{y,z}+E_{4,4}\otimes E_{2,2}\otimes E_{y,z}\\&\hspace*{-3.75cm}+E_{4,4}\otimes E_{3,3}\otimes E_{y,z}+E_{4,4}\otimes E_{4,4}\otimes E_{y,z})
    \end{align*}
    \begin{align*}
        &\hspace*{-4.3cm} =(E_{k,h}E_{o_1,r_1}E_{1,1}\otimes E_{1,r_{21}}E_{1,1}\otimes E_{1,r_{22}}E_{y,z}\\&\hspace*{-3.75cm}+E_{k,h}E_{o_1,r_1}E_{1,1}\otimes E_{1,r_{21}}E_{2,2}\otimes E_{1,r_{22}}E_{y,z}\\&\hspace*{-3.75cm}+E_{k,h}E_{o_1,r_1}E_{1,1}\otimes E_{1,r_{21}}E_{3,3}\otimes E_{1,r_{22}}E_{y,z}\\&\hspace*{-3.75cm}+E_{k,h}E_{o_1,r_1}E_{1,1}\otimes E_{1,r_{21}}E_{4,4}\otimes E_{1,r_{22}}E_{y,z}\\&\hspace*{-3.75cm}+E_{k,h}E_{o_1,r_1}E_{2,2}\otimes E_{1,r_{21}}E_{1,1}\otimes E_{1,r_{22}}E_{y,z} \\&\hspace*{-3.75cm}+E_{k,h}E_{o_1,r_1}E_{2,2}\otimes E_{1,r_{21}}E_{2,2}\otimes E_{1,r_{22}}E_{y,z}\\&\hspace*{-3.75cm}+E_{k,h}E_{o_1,r_1}E_{2,2}\otimes E_{1,r_{21}}E_{3,3}\otimes E_{1,r_{22}}E_{y,z}\\&\hspace*{-3.75cm}+E_{k,h}E_{o_1,r_1}E_{2,2}\otimes E_{1,r_{21}}E_{4,4}\otimes E_{1,r_{22}}E_{y,z}\\&\hspace*{-3.75cm}+E_{k,h}E_{o_1,r_1}E_{3,3}\otimes E_{1,r_{21}}E_{1,1}\otimes E_{1,r_{22}}E_{y,z}\\&\hspace*{-3.75cm}+E_{k,h}E_{o_1,r_1}E_{3,3}\otimes E_{1,r_{21}}E_{2,2}\otimes E_{1,r_{22}}E_{y,z}\\&\hspace*{-3.75cm}+E_{k,h}E_{o_1,r_1}E_{3,3}\otimes E_{1,r_{21}}E_{3,3}\otimes E_{1,r_{22}}E_{y,z}\\&\hspace*{-3.75cm}+E_{k,h}E_{o_1,r_1}E_{3,3}\otimes E_{1,r_{21}}E_{4,4}\otimes E_{1,r_{22}}E_{y,z}\\&\hspace*{-3.75cm}+E_{k,h}E_{o_1,r_1}E_{4,4}\otimes E_{1,r_{21}}E_{1,1}\otimes E_{1,r_{22}}E_{y,z}\\&\hspace*{-3.75cm}+E_{k,h}E_{o_1,r_1}E_{4,4}\otimes E_{1,r_{21}}E_{2,2}\otimes E_{1,r_{22}}E_{y,z}\\&\hspace*{-3.75cm}+E_{k,h}E_{o_1,r_1}E_{4,4}\otimes E_{1,r_{21}}E_{3,3}\otimes E_{1,r_{22}}E_{y,z}\\&\hspace*{-3.75cm}+E_{k,h}E_{o_1,r_1}E_{4,4}\otimes E_{1,r_{21}}E_{4,4}\otimes E_{1,r_{22}}E_{y,z}),
    \end{align*}
    and by the previous consideration, and by letting $r_1=1$, we have
    \begin{align*}
        &\hspace*{-3.7cm}=(E_{k,h}E_{o_1,1}E_{1,1}\otimes E_{1,r_{21}}E_{1,1}\otimes E_{1,r_{22}}E_{y,z}\\&\hspace*{-3.2cm}+E_{k,h}E_{o_1,1}E_{1,1}\otimes E_{1,r_{21}}E_{2,2}\otimes E_{1,r_{22}}E_{y,z}\\&\hspace*{-3.2cm}+E_{k,h}E_{o_1,1}E_{1,1}\otimes E_{1,r_{21}}E_{3,3}\otimes E_{1,r_{22}}E_{y,z}\\&\hspace*{-3.2cm}+E_{k,h}E_{o_1,1}E_{1,1}\otimes E_{1,r_{21}}E_{4,4}\otimes E_{1,r_{22}}E_{y,z}\\&\hspace*{-3.2cm}+E_{k,h}E_{o_1,1}E_{2,2}\otimes E_{1,r_{21}}E_{1,1}\otimes E_{1,r_{22}}E_{y,z} \\&\hspace*{-3.2cm}+E_{k,h}E_{o_1,1}E_{2,2}\otimes E_{1,r_{21}}E_{2,2}\otimes E_{1,r_{22}}E_{y,z}\\&\hspace*{-3.2cm}+E_{k,h}E_{o_1,1}E_{2,2}\otimes E_{1,r_{21}}E_{3,3}\otimes E_{1,r_{22}}E_{y,z}\\&\hspace*{-3.2cm}+E_{k,h}E_{o_1,1}E_{2,2}\otimes E_{1,r_{21}}E_{4,4}\otimes E_{1,r_{22}}E_{y,z}\\&\hspace*{-3.2cm}+E_{k,h}E_{o_1,1}E_{3,3}\otimes E_{1,r_{21}}E_{1,1}\otimes E_{1,r_{22}}E_{y,z}\\&\hspace*{-3.2cm}+E_{k,h}E_{o_1,1}E_{3,3}\otimes E_{1,r_{21}}E_{2,2}\otimes E_{1,r_{22}}E_{y,z}\\&\hspace*{-3.2cm}+E_{k,h}E_{o_1,1}E_{3,3}\otimes E_{1,r_{21}}E_{3,3}\otimes E_{1,r_{22}}E_{y,z}\\&\hspace*{-3.2cm}+E_{k,h}E_{o_1,1}E_{3,3}\otimes E_{1,r_{21}}E_{4,4}\otimes E_{1,r_{22}}E_{y,z}\\&\hspace*{-3.2cm}+E_{k,h}E_{o_1,1}E_{4,4}\otimes E_{1,r_{21}}E_{1,1}\otimes E_{1,r_{22}}E_{y,z}\\&\hspace*{-3.2cm}+E_{k,h}E_{o_1,1}E_{4,4}\otimes E_{1,r_{21}}E_{2,2}\otimes E_{1,r_{22}}E_{y,z}\\&\hspace*{-3.2cm}+E_{k,h}E_{o_1,1}E_{4,4}\otimes E_{1,r_{21}}E_{3,3}\otimes E_{1,r_{22}}E_{y,z}\\&\hspace*{-3.2cm}+E_{k,h}E_{o_1,1}E_{4,4}\otimes E_{1,r_{21}}E_{4,4}\otimes E_{1,r_{22}}E_{y,z}),
    \end{align*}
    resulting the following
    \begin{align}
        &\hspace*{-4cm}=(E_{k,h}E_{o_1,1}\otimes E_{1,r_{21}}E_{1,1}\otimes E_{1,r_{22}}E_{y,z}\notag\\&\hspace*{-3.5cm}+E_{k,h}E_{o_1,1}\otimes E_{1,r_{21}}E_{2,2}\otimes E_{1,r_{22}}E_{y,z}\label{MHGA:2}\\&\hspace*{-3.5cm}+E_{k,h}E_{o_1,1}\otimes E_{1,r_{21}}E_{3,3}\otimes E_{1,r_{22}}E_{y,z}\notag\\&\hspace*{-3.5cm}+E_{k,h}E_{o_1,1}\otimes E_{1,r_{21}}E_{4,4}\otimes E_{1,r_{22}}E_{y,z},\notag
    \end{align}
    and exactly in a same way as in the left hand side, (\ref{MHGA:2}) is either 0 or it will be equal to one of the summations. So, by setting $r_{21}=1$, and using the Sweedler notation, the equation (\ref{MHGA:2}) will be equal to $E_{k,h}E_{o_1,1}\otimes E_{1,1}\otimes E_{1,r_{22}}E_{y,z}$, and by using the Sweedler notation it is easy to see that the two equations (\ref{MHGA:1}) and (\ref{MHGA:2}), respectively resembling the left and right sides of the co-associativity condition (\ref{Equ:Coa:1}) will be equal, and in order for $\Delta$ to be a comultiplication, we also need $\Delta(a)(1\otimes b)$ and $(a\otimes 1)\Delta(b)$ belong to $M_4(\mathbb C)\otimes M_4(\mathbb C)\cong M_8(\mathbb C)$, which is clearly satisfied. Hence $\Delta$ is a comultiplication.

    By using the Sweedler notation it is easy to see that the two equations (\ref{MHGA:1}) and (\ref{MHGA:2}), respectively resembling the left and right sides of the co-associativity condition (\ref{Equ:Coa}) are equal, and in order for $\Delta$ to be a comultiplication, we also need $\Delta(a)(1\otimes b)$ and $(a\otimes 1)\Delta(b)$ belong to $M_4(\mathbb C)\otimes M_4(\mathbb C)\cong M_8(\mathbb C)$, which is clearly satisfied. Hence $\Delta$ is a comultiplication.
\end{document}